\newtheorem{thm}{Theorem}[section]
\newtheorem{cor}[thm]{Corollary}
\newtheorem{prop}[thm]{Proposition}
\newtheorem{lemma}[thm]{Lemma}
\newtheorem{conj}[thm]{Conjecture}
\newtheorem{open}[thm]{Open Question}
\newtheorem{sublemma}{}[thm]
\DeclareMathOperator{\Ex}{Ex}
\newcommand{\del}{\backslash}
\newcommand{\ba}{\backslash}
\newcommand{\cn}{contradiction}
\newcommand{\btu}{\bigtriangleup}
\title[Delta-matroids as subsystems of sequences of Higgs
lifts]{Delta-matroids as subsystems of sequences of Higgs lifts}
\author[J.~Bonin]{Joseph E.~Bonin} \address[J.~Bonin]
{Department of Mathematics\\ The George Washington University\\
  Washington, D.C. 20052, USA} \email[J.~Bonin] {jbonin@gwu.edu}
\author[C.~Chun] {Carolyn Chun} \address[C.~Chun]
{Department of Mathematics\\
  United States Naval Academy\\
  Annapolis, MD, 21402, USA} \email[C.~Chun] {chun@usna.edu}
\author[S.~Noble] {Steven D.\ Noble} \address[S.~Noble] {Department of
  Economics, Mathematics and Statistics \\ Birkbeck, University of
  London \\ London WC1E 7HX, United Kingdom} \email[S.~Noble]
{s.noble@bbk.ac.uk} \date{\today}
\begin{document}

\begin{abstract}
  In~\cite{MR843383}, Tardos studied special delta-matroids obtained
  from sequences of Higgs lifts; these are the full Higgs lift
  delta-matroids that we treat and around which all of our results
  revolve. We give an excluded-minor characterization of the class of
  full Higgs lift delta-matroids within the class of all
  delta-matroids, and we give similar characterizations of two other
  minor-closed classes of delta-matroids that we define using Higgs
  lifts.  We introduce a minor-closed, dual-closed class of Higgs lift
  delta-matroids that arise from lattice paths. It follows from
  results of Bouchet that all delta-matroids can be obtained from full
  Higgs lift delta-matroids by removing certain feasible sets; to
  address which feasible sets can be removed, we give an
  excluded-minor characterization of delta-matroids within the more
  general structure of set systems.  Many of these excluded minors
  occur again when we characterize the delta-matroids in which the
  collection of feasible sets is the union of the collections of bases
  of matroids of different ranks, and yet again when we require those
  matroids to have special properties, such as being paving.
\end{abstract}

\maketitle

\section{Introduction}

A \emph{set system} is a pair $S=(E,\mathcal{F})$, where $E$, or
$E(S)$, is a set, called the \emph{ground set}, and $\mathcal{F}$, or
$\mathcal{F}(S)$, is a collection of subsets of $E$.  (All set systems
in this paper have finite ground sets.)  The members of $\mathcal{F}$
are the \emph{feasible sets}.  We say that $S$ is \emph{proper} if
$\mathcal{F}\ne \emptyset$, and that $S$ is \emph{even} if $|X|-|Y|$
is even for all $X,Y\in\mathcal{F}$.  A matroid $M$ has many
associated set systems with $E=E(M)$ since we can take $\mathcal{F}$
to be, for example, the set $\mathcal{B}(M)$ of its bases, or the set
of its independent sets, or the set of its circuits; the first two are
always proper.  The first is of most interest here since the
definition of a delta-matroid can be motivated by an exchange property
that the bases of any matroid $M$ satisfy, namely, for any
$B_1,B_2\in\mathcal{B}(M)$ and for each element $x\in B_1-B_2$, there
is a $y\in B_2-B_1$ for which $B_1\triangle\{x,y\}\in\mathcal{B}(M)$.
To get the definition of a delta-matroid, replace the set differences
by symmetric differences.  Thus, as introduced by Bouchet
in~\cite{ab1}, a \emph{delta-matroid} is a proper set system
$D=(E,\mathcal{F})$ for which $\mathcal{F}$ satisfies the
\emph{delta-matroid symmetric exchange axiom}:
\begin{quote}
  (SE) \
  for all triples $(X,Y,u)$ with $X$ and $Y$ in $\mathcal{F}$
  and $u\in X\triangle Y $, there is a $v \in X\triangle Y$ (perhaps
  $u$ itself) such that $X\triangle \{u,v\}$ is in $\mathcal{F}$.
\end{quote}
Just as there is a mutually-enriching interplay between matroid theory
and graph theory, the theory of delta-matroids has substantial
connections with the theory of embedded graphs; see~\cite{CMNR,CMNR2}.

Naturally, there are strong links between matroids and delta-matroids;
below we cite several that are relevant in this paper.  First, for a
delta-matroid $D$, let $\max(\mathcal{F}(D))$ be the collection of
sets in $\mathcal{F}(D)$ that have the largest cardinality among sets
in $\mathcal{F}(D)$, and define $\min(\mathcal{F}(D))$ similarly.  An
easy application of property (SE) shows that each of
$\max(\mathcal{F}(D))$ and $\min(\mathcal{F}(D))$ is the collection of
bases of a matroid on $E$; we denote these matroids by $D_{\max}$ and
$D_{\min}$, respectively, and call them the \emph{maximal} and
\emph{minimal matroids} of $D$.

A matroid $Q$ on $E$ is a \emph{quotient} of a matroid $L$ on $E$, or
$L$ is a \emph{lift} of $Q$, if there is a matroid $M$ and a subset
$X$ of $E(M)$ for which $M\del X=L$ and $M/X=Q$.  The following
connection between $D_{\min}$ and $D_{\max}$ was proven by
Bouchet~\cite[Theorem 3.3]{multi2}.

\begin{prop}\label{prop:bouchetquotient}
  For any delta-matroid $D$, the matroid $D_{\min}$ is a quotient of
  $D_{\max}$.
\end{prop}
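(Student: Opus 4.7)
My approach is to verify the equivalent rank-function characterization of a matroid quotient: $D_{\min}$ is a quotient of $D_{\max}$ if and only if
\[
r_{D_{\max}}(B)-r_{D_{\max}}(A)\ \ge\ r_{D_{\min}}(B)-r_{D_{\min}}(A)
\]
for all $A\subseteq B\subseteq E$. The usual matroid reduction to singleton extensions $B=A\cup\{e\}$ (both sides then lie in $\{0,1\}$) turns this into the closure containment $\operatorname{cl}_{D_{\max}}(A)\subseteq\operatorname{cl}_{D_{\min}}(A)$ for every $A\subseteq E$, equivalently the statement that every circuit of $D_{\max}$ is a union of circuits of $D_{\min}$.

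To prove the circuit statement, fix a $D_{\max}$-circuit $C$ and $e\in C$. First I would extend $C\setminus\{e\}$, which is $D_{\max}$-independent, to a basis $B$ of $D_{\max}$ with $e\notin B$; then $B\in\mathcal{F}$, and $C$ is the unique $D_{\max}$-circuit in $B\cup\{e\}$ through $e$. Fixing a basis $B_0$ of $D_{\min}$, which is also feasible, the goal is to use (SE) to construct a feasible set $B'\in\mathcal{F}$ with $|B'|=r(D_{\min})$, $B'\subseteq B\cup\{e\}$, and $e\in B'$. Such a $B'$ is a basis of $D_{\min}$, and its fundamental $D_{\min}$-circuit of $e$ is a circuit $C'$ of $D_{\min}$ lying in $B'\cup\{e\}\subseteq B\cup\{e\}$; by uniqueness of $C$ as the $D_{\max}$-circuit in $B\cup\{e\}$ through $e$, one concludes $C'\subseteq C$.

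Constructing $B'$ is the crux. I would iteratively apply (SE) to the current feasible set $X$ (initially $B$) with target $Y=B_0$, choosing $u\in X\setminus Y$ and performing a case analysis on the three possible locations of the returned $v\in X\triangle Y$: when $v=u$, the cardinality of $X$ drops by $1$; when $v\in X\setminus Y$, it drops by $2$; and when $v\in Y\setminus X$, the cardinality is preserved but $|X\triangle Y|$ strictly shrinks. Induction on the potential $|X|+|X\triangle Y|$ then drives $|X|$ down to $r(D_{\min})$ in finitely many steps. The main obstacle is ensuring that throughout this descent $X$ remains inside $B\cup\{e\}$ and keeps $e$ available: this requires preferentially choosing $u$ from $B\setminus(B_0\cup\{e\}\cup(C\setminus\{e\}))$ whenever possible, and occasional auxiliary invocations of (SE) to swap out elements that would otherwise leave $B\cup\{e\}$. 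Carefully managing these exchanges — so that the chain of feasible sets produced by (SE) actually terminates inside $B\cup\{e\}$ at a set of the correct cardinality containing $e$ — is the technical heart of Bouchet's original argument.
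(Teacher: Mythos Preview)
The paper does not prove Proposition~\ref{prop:bouchetquotient}; it simply quotes it from Bouchet~\cite[Theorem~3.3]{multi2}. So there is no in-paper argument to compare your proposal against, and your proposal must be judged on its own.

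Your plan has two genuine problems. First, there is an internal inconsistency in the target object $B'$. You ask for a basis $B'$ of $D_{\min}$ with $e\in B'$ and then speak of ``its fundamental $D_{\min}$-circuit of $e$''. If $e$ already lies in the basis $B'$, there is no fundamental circuit of $e$ with respect to $B'$. Presumably you intend $e\notin B'$ and $B'\subseteq B$; but then the next step fails.

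Second, and more seriously, the inference ``by uniqueness of $C$ as the $D_{\max}$-circuit in $B\cup\{e\}$ through $e$, one concludes $C'\subseteq C$'' is not valid. The set $C'$ you have produced is a circuit of $D_{\min}$, not of $D_{\max}$; uniqueness of the $D_{\max}$-circuit in $B\cup\{e\}$ tells you nothing about where a $D_{\min}$-circuit must sit. Indeed, the statement ``every $D_{\min}$-circuit contained in $B\cup\{e\}$ lies inside $C$'' is equivalent to what you are trying to prove, so you cannot invoke it. Without this, the whole reduction to constructing $B'$ collapses: even a successful construction would not yield $C'\subseteq C$.

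Finally, the descent you sketch via repeated applications of (SE) is not actually carried out. The difficulty you flag --- keeping the evolving feasible set inside $B\cup\{e\}$ while forcing it to contain $e$ --- is real, and the phrases ``preferentially choosing $u$'' and ``occasional auxiliary invocations of (SE)'' do not constitute an argument. As written, this is an outline of where a proof \emph{might} go, not a proof. If you want to pursue the rank-inequality route, it is cleaner to prove directly that for $A\subseteq E$ and $e\notin A$, if $e\in\cl_{D_{\max}}(A)$ then $e\in\cl_{D_{\min}}(A)$, working with feasible sets rather than trying to track a specific circuit $C$.
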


This result and the following property of $D_{\min}$ and $D_{\max}$
are important in our work.

\begin{lemma}\label{lem:containment}
  If $X$ is any feasible set in a delta matroid $D$, then there are
  bases $B'$ of $D_{\min}$ and $B$ of $D_{\max}$ with
  $B'\subseteq X\subseteq B$.
\end{lemma}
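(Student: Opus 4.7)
The plan is to prove the two containments separately, and each follows the same ``closest feasible set'' strategy using the symmetric exchange axiom (SE). I will do the argument for $X \subseteq B$ in detail; the argument for $B' \subseteq X$ is completely parallel.

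Among all bases $Z$ of $D_{\max}$, choose one that maximizes $|X\cap Z|$ (equivalently, minimizes $|X\triangle Z|$); such a $Z$ exists since $\mathcal{B}(D_{\max})$ is nonempty. I claim $X\subseteq Z$. Suppose not; then there is an element $u\in X\setminus Z\subseteq X\triangle Z$. Applying (SE) with the roles of ``$X$'' and ``$Y$'' in the axiom played by $Z$ and $X$, I obtain an element $v\in X\triangle Z$ such that $Z\triangle\{u,v\}\in\mathcal{F}(D)$. The key step is to check the three possibilities for $v$ and show that each contradicts either the maximality of $|Z|$ or the choice of $Z$ minimizing $|X\triangle Z|$.

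Concretely: if $v=u$, then $Z\triangle\{u,v\}=Z\cup\{u\}$ is feasible with cardinality $|Z|+1$, contradicting $Z\in\max(\mathcal{F}(D))$. If $v\in X\setminus Z$ with $v\ne u$, then $Z\triangle\{u,v\}=Z\cup\{u,v\}$ gives an even larger feasible set, again a contradiction. Finally, if $v\in Z\setminus X$, then $Z':=Z\triangle\{u,v\}=(Z\setminus\{v\})\cup\{u\}$ has the same cardinality as $Z$ (so $Z'\in\mathcal{B}(D_{\max})$) but $|Z'\cap X|=|Z\cap X|+1$, contradicting the choice of $Z$. Hence $X\subseteq Z$, and we take $B:=Z$.

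For the other containment, choose $Z'\in\mathcal{B}(D_{\min})$ maximizing $|X\cap Z'|$; the same case analysis with $u\in Z'\setminus X$ (apply (SE) to $(Z',X,u)$) shows that each of $v=u$, $v\in Z'\setminus X$, and $v\in X\setminus Z'$ yields a contradiction to either the minimality of $|Z'|$ or the choice of $Z'$, forcing $Z'\subseteq X$. The main obstacle is really just bookkeeping: being careful that (SE) is applied with the correct ordered triple, and tracking how $Z\triangle\{u,v\}$ intersects $X$ in each case. No further ingredients beyond (SE) and the definitions of $D_{\max}$ and $D_{\min}$ are needed.
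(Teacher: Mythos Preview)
Your proof is correct. The paper does not actually supply a proof of this lemma; it merely remarks that the result ``follows from \cite[Property 4.1]{rep} and is easy to prove directly.'' Your closest-feasible-set argument via (SE) is exactly the kind of direct proof the authors allude to, and the case analysis is clean and complete.
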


In other words, in a delta-matroid, the minimal feasible sets with
respect to inclusion are exactly the minimal feasible sets with
respect to cardinality, and likewise for maximal.  Lemma
\ref{lem:containment} follows from \cite[Property 4.1]{rep} and is
easy to prove directly.

The converse of Proposition~\ref{prop:bouchetquotient} is true.  One
way to show it is to show that if $Q$ is a quotient of $L$, with both
matroids on the set $E$, and if we let $\mathcal{F}$ be the set of all
subsets $X$ of $E$ for which there are bases $B'\in\mathcal{B}(Q)$ and
$B\in\mathcal{B}(L)$ with $B'\subseteq X\subseteq B$, then
$(E,\mathcal{F})$ is a delta-matroid.  Such delta-matroids were
studied by Tardos in~\cite{MR843383}; she called them generalized
matroids.  In Section~\ref{sec:hl}, we interpret the construction of
these special delta-matroids using the Higgs lifts of $Q$ toward $L$;
thus, we call such delta-matroids full Higgs lift delta-matroids.  We
consider beginning with a full Higgs lift delta-matroid and removing
all of the feasible sets of certain cardinalities.  We call this a
Higgs lift delta-matroid, or an even Higgs lift delta-matroid when all
of the feasible sets of one parity are removed.  (See
Proposition~\ref{new31}.)  We give an excluded-minor characterization
of Higgs lift delta-matroids (Theorem~\ref{extraexhiggs}), as well as
counterparts in the full case and in the even case.  In
Section~\ref{sec:lp}, we introduce Higgs lift delta-matroids that
arise from lattice paths.

Lemma~\ref{lem:containment} says that any delta-matroid can be
obtained from a full Higgs lift delta-matroid by discarding some of
the feasible sets.  It is natural to ask what restrictions there are
on the sets that we remove.  This issue is addressed in
Section~\ref{sec:exmin}, where we give an excluded-minor
characterization of delta-matroids within the broader structure of set
systems.  We address the corresponding issues for even delta-matroids,
for matroids, and for binary delta-matroids.

  For a delta-matroid $D$ and any integer $i$ with
  $r(D_{\min})\leq i\leq r(D_{\max})$, let $N_i$ be the set system
  $(E,\{F\in\mathcal{F} \,:\, |F|=i\})$.  If $D$ is a Higgs lift
  delta-matroid, then each proper set system
  $N_i$ is a matroid, but this need not be
  true for other delta-matroids.  In Section~\ref{sec:msdm}, we
  characterize the delta-matroids $D$ for which each $N_i$ is a
  matroid, as well as, for instance, when each $N_i$ is a paving
  matroid or a sparse paving matroid.

We follow the notation and terminology for matroids that is used in~\cite{oxley}.  In the next section, we review some key points about
delta-matroids, as well as some of the more specialized matroid topics
that play roles throughout this paper.

\section{Background}

Two set systems $S=(E,\mathcal{F})$ and $S'=(E',\mathcal{F}')$ are
\emph{isomorphic} if there is a bijection $\phi:E\to E'$ so that, for
all $A\subseteq E$, we have $A\in \mathcal{F}$ if and only if
$\phi(A)\in\mathcal{F}'$.

\subsection{Minors and twists of set systems}
Let $S=(E,\mathcal{F})$ be a proper set system.  An element $e\in E$ is a
\emph{loop} of $S$ if no set in $\mathcal{F}$ contains $e$.  If $e$ is
in every set in $\mathcal{F}$, then $e$ is a \emph{coloop}.  If $e$ is
not a loop, then the \emph{contraction of $e$ from $S$}, written
$S/e$, is given by
\[S/e = (E-e, \{F-e:e\in F\in\mathcal{F}\}).\]  (As in matroid theory,
we usually omit set brackets from singleton sets.)  If $e$ is not a
coloop, then the \emph{deletion of $e$ from $S$}, written $S\ba e$, is
given by
\[S\ba e = (E-e,\{F\subseteq E-e:F\in\mathcal{F}\}).\] If $e$ is a
loop or a coloop, then one of $S/e$ and $S\ba e$ has already been
defined, so we can set $S/e=S\ba e$.  Any sequence of deletions and
contractions, starting from $S$, gives a set system $S'$, called a
\emph{minor} of $S$.  Each minor of $S$ is a proper set system.  Note
that if $S$ is even, then so are its minors.

A collection $\mathcal C$ of proper set systems is \emph{minor closed}
if every minor of every member of $\mathcal C$ is in $\mathcal C$.
Given such a collection $\mathcal C$, a proper set system $S$ is an
\emph{excluded minor} for $\mathcal{C}$ if $S\notin\mathcal{C}$ and
all other minors of $S$ are in $\mathcal{C}$.  A proper set system
belongs to $\mathcal C$ if and only if none of its minors is
isomorphic to an excluded minor for $\mathcal C$.  Thus, the excluded
minors determine $\mathcal C$; they are the minor-minimal obstructions
to membership in $\mathcal{C}$.

The order in which elements are deleted or contracted can matter
since, for instance, contracting an element $e$ can turn a non-loop of
$S$ into a loop of $S/e$.  For example, if
$S=(\{a,b,c,d\},\{\{a,b\},\{c,d\}\})$, then $c$ is a loop of $S/a$ and
$S/a/c=(\{b,d\},\{b\})$, whereas $a$ is a loop of $S/c$ and
$S/c/a=(\{b,d\},\{d\})$.  However, for disjoint subsets $X$ and $Y$ of
$E$, if some set in $\mathcal{F}$ is disjoint from $X$ and contains
$Y$, then the deletions and contractions in $S\ba X/Y$ can be done in
any order, and
\begin{equation}\label{eq:minor}
  S\ba X/Y=(E-(X\cup Y),\{F-Y\,:\,F\in\mathcal{F} \text{ and
  }Y\subseteq F\subseteq E-X\}).
\end{equation}
We next show that all minors of a proper set system are of this type.

\begin{lemma}\label{minorisminor}
  For any minor $S'$ of a proper set system $S=(E,\mathcal{F})$, there
  are disjoint subsets $X$ and $Y$ of $E$ for which \emph{(i)} some
  set in $\mathcal{F}$ is disjoint from $X$ and contains $Y$, and
  \emph{(ii)} $S'=S\del X/Y$.  Thus, $S'$ is given by equation
  \emph{(\ref{eq:minor})}.
\end{lemma}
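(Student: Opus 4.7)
The plan is to induct on the length of the sequence of deletions and contractions that produces $S'$ from $S$. In the base case (length $0$) we have $S'=S$ and we may take $X=Y=\emptyset$, with any feasible set of $S$ witnessing condition (i). For the inductive step, write $S'=T\star e$ where $T$ is obtained from $S$ by a shorter sequence and $\star\in\{\ba,/\}$. By the inductive hypothesis, there are disjoint $X,Y\subseteq E$ with $T=S\ba X/Y$ such that some $F_0\in\mathcal{F}$ satisfies $Y\subseteq F_0\subseteq E-X$; in particular $T$ is proper, and $e\in E-(X\cup Y)$.

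Since $T$ is proper, $e$ cannot simultaneously be a loop and a coloop of $T$. The strategy is to augment either $X$ or $Y$ by $e$ as follows. If $\star=\ba$ and $e$ is not a coloop of $T$, or if $e$ is a loop of $T$ (so that $T/e=T\ba e$ by convention), put $X':=X\cup\{e\}$ and $Y':=Y$. If $\star=/$ and $e$ is not a loop of $T$, or if $e$ is a coloop of $T$, put $X':=X$ and $Y':=Y\cup\{e\}$. Disjointness of $X'$ and $Y'$ is clear. In each case, the assumption on $e$ directly provides a feasible set verifying condition (i) for the new pair: if we enlarge $X$, then since $e$ is not a coloop of $T$ there is some $F\in\mathcal{F}$ with $Y\subseteq F\subseteq E-X$ and $e\notin F-Y$, hence $e\notin F$ and $F$ is disjoint from $X'$ while still containing $Y$; if we enlarge $Y$, then since $e$ is not a loop of $T$ some $F\in\mathcal{F}$ with $Y\subseteq F\subseteq E-X$ satisfies $e\in F-Y$, hence $Y'\subseteq F\subseteq E-X'$.

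It remains to check that $S'=S\ba X'/Y'$. Because condition (i) holds for $(X,Y)$, equation~(\ref{eq:minor}) gives an explicit description of the feasible sets of $T$, and one can then apply the definitions of deletion and contraction directly to $T$ to obtain the feasible sets of $T\ba e$ or $T/e$; the resulting description matches equation~(\ref{eq:minor}) applied to $(X',Y')$, which is legitimate because condition (i) has just been verified for $(X',Y')$. The displayed formula in the statement is then just equation~(\ref{eq:minor}) applied to the pair $(X,Y)$ that we have produced.

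The main obstacle is purely bookkeeping: one must be careful in the boundary case where $e$ is a loop or a coloop of $T$, since then $T\ba e=T/e$ and either operation is a valid ``next step,'' so we must choose the correct side on which to place $e$ to preserve condition (i). The case analysis above handles this by letting the loop/coloop status of $e$ dictate the choice, which guarantees that the witnessing feasible set from the inductive hypothesis (or a slight modification of it) continues to work.
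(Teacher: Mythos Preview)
Your proof is correct and follows essentially the same approach as the paper: both arguments place each element $e_i$ into $X$ when it is a loop of the current minor or is genuinely deleted (not a coloop), and into $Y$ otherwise, using the loop/coloop status to guarantee a witnessing feasible set. The only difference is presentational---you write the argument as an explicit induction on the length of the sequence, while the paper defines $X$ and $Y$ all at once and then observes that $F\cup Y\in\mathcal{F}$ for any feasible $F$ of $S'$.
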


\begin{proof}
  Suppose we get $S'$ from $S$ by, for each of $e_1,e_2,\ldots,e_k$ in
  turn, either deleting or contracting $e_i$, giving the sequence of
  minors $S_0=S,S_1,S_2,\ldots ,S_k=S'$.  Let $X$ be the set of
  elements $e_i$ in $\{e_1,e_2,\ldots,e_k\}$ that satisfy at least one
  of the following conditions:
  \begin{enumerate}
  \item $e_i$ is a loop of $S_{i-1}$ (so $S_i = S_{i-1}\del e_i$), or
  \item $e_i$ is not a coloop of $S_{i-1}$ and $S_i=S_{i-1}\del e_i$.
  \end{enumerate}
  Let $Y= \{e_1,e_2,\ldots,e_k\}-X$, so for each $e_j\in Y$, either
  $e_j$ is a coloop of $S_{j-1}$ (so $S_j = S_{j-1}/e_j$), or $e_j$ is
  not a loop of $S_{j-1}$ and $S_j=S_{j-1}/e_j$.  Thus, to get $S'$
  from $S$, for $e_1,e_2,\ldots,e_k$ in turn, delete each $e_i\in X$
  and contract each $e_j\in Y$.  By the definitions of $X$, $Y$, and
  these operations, if $F$ is a feasible set of $S'$, then
  $F\cup Y\in\mathcal{F}$, so assertion (i) holds.  Given the remarks
  above, the remaining assertions now follow.
\end{proof}

Bouchet and Duchamp~\cite{rep} showed that if $S$ is a delta-matroid
and $S'=S\ba X/Y$, then $S'$ is a delta-matroid and $S'$ is
independent of the order of the deletions and contractions.

For $A\subseteq E$, the \emph{twist of $S$ on $A$}, which is also
called the \emph{partial dual of $S$ with respect to $A$}, denoted
$S*A$, is given by
\[S*A=(E,\{F\btu A\,:\,F\in\mathcal{F}\}).\] Note that
$S/e=(S * e)\ba e$ and $(S*A)*A=S$.  The \emph{dual} $S^*$ of $S$ is
$S*E$.  Note that twists of even set systems are even.  However, apart
from the dual, the twists of a matroid
$\bigl(E(M),\mathcal{B}(M)\bigr)$ are generally not matroids, as
discussed in~\cite[Theorem~3.4]{CMNR2}.

\subsection{Quotients, lifts, and Higgs lifts}
We will use the following result about quotients, which is well known
(see, e.g.,~\cite{Brylawski,oxley}).

\begin{lemma}\label{lem:quotviabases}
  For matroids $Q$ and $L$ on $E$, the statements below are
  equivalent.
  \begin{enumerate}
  \item The matroid $Q$ is a quotient of $L$.
  \item The matroid $L^*$ is a quotient of $Q^*$.
  \item\label{lemlabel:quocir} Each circuit of $L$ is a union of
    circuits of $Q$.
  \item\label{lemlabel:quobasis} For each basis $B$ of $L$ and element
    $e\in E-B$, there is a basis $B'$ of $Q$ with $B'\subseteq B$ and
    \[\{f\,:\, (B'\cup e)-f \text{ is a basis of } Q\}\subseteq
    \{f\,:\, (B\cup e)-f \text{ is a basis of } L\}.\]
  \end{enumerate}
\end{lemma}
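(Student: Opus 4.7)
The plan is to establish $(1)\Leftrightarrow(2)$ via matroid duality and $(1)\Leftrightarrow(3)\Leftrightarrow(4)$ via closures and fundamental circuits, relying throughout on the standard equivalence that $Q$ is a quotient of $L$ iff $\cl_L(A)\subseteq \cl_Q(A)$ for every $A\subseteq E$ (equivalently, every flat of $Q$ is a flat of $L$). For $(1)\Leftrightarrow(2)$, if $M$ is a matroid on $E\cup X$ (disjoint) with $M\del X=L$ and $M/X=Q$, then matroid duality gives $(M\del X)^{*}=M^{*}/X$ and $(M/X)^{*}=M^{*}\del X$, so $M^{*}$ together with the same $X$ witnesses that $L^{*}$ is a quotient of $Q^{*}$; the reverse direction is symmetric.

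For $(1)\Leftrightarrow(3)$, assume (1) and let $C$ be a circuit of $L$. For each $e\in C$, the inclusion $e\in \cl_L(C-e)\subseteq \cl_Q(C-e)$ produces a $Q$-circuit in $C$ through $e$, so as $e$ varies $C$ is a union of $Q$-circuits. Conversely, assuming (3), if $e\in \cl_L(A)-A$ then some $L$-circuit $C$ satisfies $e\in C\subseteq A\cup e$, and any $Q$-circuit $C'\subseteq C$ through $e$ lies in $A\cup e$, forcing $e\in \cl_Q(A)$; hence $\cl_L\subseteq \cl_Q$.

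For $(1)\Leftrightarrow(4)$, the key observation is that for $e$ outside a basis $B$ of a matroid, the set $\{f:(B\cup e)-f\text{ is a basis}\}$ equals the fundamental circuit $C(e,B)$. Assuming (1), and hence (3) and $\cl_L\subseteq \cl_Q$, take a basis $B$ of $L$ and $e\notin B$, and pick a $Q$-circuit $C'\ni e$ inside the $L$-fundamental circuit $C_L(e,B)$. Since $\cl_L(B)=E$ forces $\cl_Q(B)=E$, the basis $B$ spans $Q$, so every maximal $Q$-independent subset of $B$ is a basis of $Q$; extend $C'-e$ to such a basis $B'\subseteq B$. Then $C'\subseteq B'\cup e$, so by uniqueness of the $Q$-circuit in $B'\cup e$ we obtain $C_Q(e,B')=C'\subseteq C_L(e,B)$, as required. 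Conversely, for $(4)\Rightarrow(3)$, given an $L$-circuit $C$ and $e\in C$, extend $C-e$ to a basis $B$ of $L$ (so $C=C_L(e,B)$), apply (4) to obtain a basis $B'\subseteq B$ of $Q$, and note that $|B'\cup e|=r(Q)+1$ forces $C_Q(e,B')$ to be a genuine $Q$-circuit through $e$ contained in $C$; letting $e$ range over $C$ exhibits $C$ as a union of $Q$-circuits.

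I expect the main obstacle to be the bookkeeping in $(1)\Rightarrow(4)$: one must verify that every basis of $L$ spans the quotient $Q$ so the chosen $B'$ really is a basis of $Q$, and this is exactly where the closure inclusion $\cl_L\subseteq \cl_Q$ underlying the flat characterization of quotients plays an essential role.
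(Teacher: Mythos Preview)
Your argument is correct. Note, however, that the paper does not supply its own proof of this lemma: it is stated as a well-known fact with citations to \cite{Brylawski} and \cite{oxley}, and no argument is given in the paper. So there is no ``paper's approach'' to compare against; your self-contained proof via the closure characterization $\cl_L\subseteq\cl_Q$ of quotients, together with fundamental circuits, is the standard route and is sound. The only point worth recording explicitly (you handle it implicitly) is that in $(4)\Rightarrow(3)$ the element $e$ lies outside $B'$ because $B'\subseteq B$ and $e\notin B$, so $B'\cup e$ genuinely has size $r(Q)+1$ and hence contains a $Q$-circuit through $e$.
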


We will use Higgs lifts, for which we recall only the background we
need.  (See~\cite{splice, Brylawski, Strong} for more about this
construction.)  Let $Q$ be a quotient of $L$ on $E$ and set
$k=r(L)-r(Q)$.  For each integer $i$ with $0\leq i\leq k$, the
function $r_i$ that is defined by
\begin{equation}\label{Higgsrank}
  r_i(X) = \min\{r_{Q}(X)+i,\,r_{L}(X)\},
\end{equation}
for $X\subseteq E$, is the rank function of a matroid on $E$; this
matroid is the \emph{$i$-th Higgs lift of $Q$ toward $L$} and is
denoted $H^i_{Q,L}$.  Its bases are the sets of size $r(Q)+i$ that
span $Q$ and are independent in $L$, or, equivalently, contain a basis
of $Q$ and are themselves contained in a basis of $L$.  It follows that
if $0\leq i\leq j\leq k$, then $H^j_{Q,L}$ is the $(j-i)$-th Higgs
lift of $H^i_{Q,L}$ toward $L$.  The matroid $H^i_{Q,L}$ is the freest
(i.e., greatest in the weak order) quotient of $L$ that has $Q$ as a
quotient and has rank $r(Q)+i$.  Higgs lifts commute with minors and
duals, as we state next.  (See~\cite[Propositions 2.2 and 2.6]{splice}
for proofs.)  So that we do not need to restrict $i$ and $j$ below, as
is common we set $H^i_{Q,L}$ to $L$ when $i>k$, and to $Q$ when $i<0$.

\begin{lemma}\label{lem:higgsdual}
  If $Q$ is a quotient of $L$ and $i+j=r(L)-r(Q)$, then $(H^i_{Q,L})^*
  =H^j_{L^*,Q^*}$.  Also, if $X\subseteq E$, then $(H^i_{Q,L})|X
  =H^i_{Q|X,L|X}$ and $(H^i_{Q,L})/X =H^{i-t}_{Q/X,L/X}$ where
  $t=r_L(X)-r_Q(X)$.
\end{lemma}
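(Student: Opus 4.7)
The plan is to verify each of the three identities by a direct comparison of rank functions, using throughout the standard fact that when $Q$ is a quotient of $L$, the function $r_L(A) - r_Q(A)$ is nondecreasing in $A$ and takes values in $[0, k]$, where $k = r(L) - r(Q)$. The conventions $H^{i}_{Q,L} = L$ for $i > k$ and $H^i_{Q,L} = Q$ for $i < 0$ will be the only delicate point and must be reconciled in the boundary cases.

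For the duality statement, I would first note that by Lemma~\ref{lem:quotviabases}(2), $L^*$ is a quotient of $Q^*$, and that $r(Q^*) - r(L^*) = k$, so $H^j_{L^*, Q^*}$ is defined whenever $0 \le j \le k$. Applying the matroid duality formula $r^*(X) = |X| - r(M) + r(E - X)$ to $H^i_{Q,L}$, using $r(H^i_{Q,L}) = r(Q) + i$, and distributing the additive constants inside the minimum in~(\ref{Higgsrank}), I would substitute $-r(Q) - i = -r(L) + j$ (which is just the hypothesis $i + j = k$ rewritten) and rearrange the result into $\min\{r_{Q^*}(X),\, r_{L^*}(X) + j\}$, which is precisely the rank function of $H^j_{L^*, Q^*}$.

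The restriction identity is then immediate from~(\ref{Higgsrank}): $Q|X$ is a quotient of $L|X$ by the usual restriction property for quotients, and the rank functions of $Q$ and $L$ agree with those of $Q|X$ and $L|X$ on subsets of $X$, so the two sides share a common rank formula.

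The contraction identity is the most involved step and the main obstacle. I would compute $r_{(H^i_{Q,L})/X}(Y) = r_{H^i_{Q,L}}(Y \cup X) - r_{H^i_{Q,L}}(X)$ by expanding both terms via~(\ref{Higgsrank}) and comparing with the rank function $\min\{r_{Q/X}(Y) + (i-t),\, r_{L/X}(Y)\}$ of $H^{i-t}_{Q/X, L/X}$. In the generic range $t \le i \le k$, the minimum at $X$ evaluates to $r_L(X)$, and after cancellation the difference transforms into the desired expression. The boundary cases require the monotonicity of $r_L - r_Q$: when $i < t$, this monotonicity forces the minima at both $X$ and $Y \cup X$ to pick the $r_Q + i$ term, and the difference collapses to $r_{Q/X}(Y)$, matching the convention $H^{i-t}_{Q/X, L/X} = Q/X$ for $i - t < 0$; the case $i > k$ is symmetric, with both minima picking $r_L$ and the difference collapsing to $r_{L/X}(Y)$, matching $H^{i-t}_{Q/X, L/X} = L/X$ since $i - t > k - t$.
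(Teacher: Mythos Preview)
Your approach is correct. The paper does not actually prove this lemma: it refers the reader to \cite[Propositions~2.2 and~2.6]{splice} for proofs, so there is no in-paper argument to compare against. A direct rank-function verification of the kind you outline is the standard route and is essentially what one finds in the cited source.

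One small point to tidy: your treatment of the restriction identity glosses over the possibility that $i$ exceeds $r(L|X)-r(Q|X)=r_L(X)-r_Q(X)$, in which case $H^i_{Q|X,L|X}$ is defined by the convention to be $L|X$ rather than by formula~(\ref{Higgsrank}). This is harmless --- the same monotonicity of $r_L-r_Q$ that you invoke for contraction shows that, for every $Y\subseteq X$, one has $r_L(Y)-r_Q(Y)\le r_L(X)-r_Q(X)<i$, so the minimum in~(\ref{Higgsrank}) picks $r_L(Y)$ and $(H^i_{Q,L})|X=L|X$ as required --- but it deserves a sentence, parallel to the boundary analysis you already give for contraction. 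Likewise, the case $i>k$ in your contraction discussion is really about the conventions on both sides (since $H^i_{Q,L}=L$ and $i-t>k-t$ forces $H^{i-t}_{Q/X,L/X}=L/X$); your phrasing ``both minima picking $r_L$'' is fine because the formula happens to agree with the convention when $i>k$, but not when $i<0$, so for completeness you should note separately that $i<0$ gives $Q/X$ on both sides directly from the conventions.
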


\section{Higgs lift delta-matroids}\label{sec:hl}

It is often useful to view a simple graph on $n$ vertices as a
subgraph of the maximal such graph, $K_n$.  Similarly, a rank-$r$
simple matroid that is representable over $GF(q)$ can be seen as a
restriction of the maximal such matroid, $PG(r-1,q)$.  In that spirit,
by the next two results we can view each delta-matroid $D$ as coming
from the maximal delta-matroid that has the same minimal and maximal
matroids as $D$.  These maximal delta-matroids correspond to the case
$K=\{0,1,\ldots,k\}$ in the next result.  This result shows that the
converse of Proposition~\ref{prop:bouchetquotient} holds.

\begin{prop}
\label{new31}
Fix a matroid $L$ on $E$ and a quotient $Q$ of $L$. Set $k=r(L)-r(Q)$
and let $K$ be a subset of $\{0,1,2,\ldots,k\}$ for which
$\{0,1,2,\ldots,k\}-K$ contains no pair of consecutive integers.  Then
the union
\[\mathcal{F}=\bigcup_{i\in K}\mathcal{B}(H^i_{Q,L})\] of
the sets of bases of the Higgs lifts $H^i_{Q,L}$ of $Q$ towards $L$,
indexed by the elements of $K$, is the set of feasible sets of a
delta-matroid on $E$.
\end{prop}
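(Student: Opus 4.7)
The plan is to verify the symmetric exchange axiom (SE) for $\mathcal{F}$ by case analysis. The key characterization is that $X\in\mathcal{B}(H^i_{Q,L})$ if and only if $X$ is $L$-independent, spans $Q$, and has size $r(Q)+i$; hence $\mathcal{F}$ consists of the $L$-independent, $Q$-spanning subsets whose size, minus $r(Q)$, lies in $K$. The essential structural tool is that, because $Q$ is a quotient of $L$, one has $\cl_L(A)\subseteq\cl_Q(A)$ for every $A\subseteq E$; in particular, once a set is $Q$-spanning, any element picked from outside $\cl_Q$ of that set preserves both $Q$-spanning and $L$-independence.

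Fix $X\in\mathcal{B}(H^i_{Q,L})\cap\mathcal{F}$ and $Y\in\mathcal{B}(H^j_{Q,L})\cap\mathcal{F}$, and let $u\in X\triangle Y$; the case $u\in Y-X$ is handled symmetrically via duality (using Lemma~\ref{lem:higgsdual} and the fact that the class in the proposition is closed under duality), so I assume $u\in X-Y$. The three allowed moves give $X-u$ (when $v=u$; level $i-1$), $X-u-v$ (when $v\in X-Y$, $v\neq u$; level $i-2$), and $(X-u)\cup v$ (when $v\in Y-X$; level $i$). Since $i\in K$, a valid level-$i$ move automatically lies in $\mathcal{F}$. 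If $X-u$ does not span $Q$, I obtain a level-$i$ move: because $Y$ spans $Q$ and $u\notin Y$, some $v\in Y-X$ lies outside $\cl_Q(X-u)$, and the quotient inclusion makes $(X-u)\cup v$ a basis of $H^i_{Q,L}$. Otherwise $X-u$ spans $Q$, so $X-u\in\mathcal{B}(H^{i-1}_{Q,L})$, and $v=u$ works whenever $i-1\in K$.

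In the remaining situation, $X-u$ spans $Q$ and $i-1\notin K$. The hypothesis on $K$ forces either $i=1$ or $i-2\in K$. In the boundary case $i=1$, only the level-$i$ option is usable: if it failed, every $v\in Y-X$ would lie in $\cl_L(X-u)$, and since $Y\cap X\subseteq X-u\subseteq\cl_L(X-u)$ (as $u\notin Y$), this gives $Y\subseteq\cl_L(X-u)$, whence $|Y|\leq r(Q)$, forcing $j=0$ and contradicting $j\in K$ with $0\notin K$. For $i\geq 2$ with $i-2\in K$, I try a level-$i$ move first and, failing that, a level-$(i-2)$ move, which requires $v\in X-Y$ with $v\neq u$ and $X-u-v$ spanning $Q$.

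The main obstacle is showing the last two options cannot both fail when $i\geq 2$. Assuming both do, failure of the level-$i$ move gives $Y\subseteq\cl_L(X-u)$ as above, whence $j\leq i-2$; failure of the level-$(i-2)$ move gives $X-Y-u\subseteq S$, where $S$ is the intersection of all bases of $Q$ contained in $X-u$, equivalently $(X-u)-S\subseteq Y$. Then $S\not\subseteq Y$ (else $X-u\subseteq Y$ would force $j\geq i-1$), so pick $w\in S-Y$ and apply matroid basis exchange in $Q$ between a basis $B_Q$ of $Q$ inside $X-u$ (which must contain $w$ since $w\in S$) and a basis $B_Q'$ of $Q$ inside $Y$, obtaining $w'\in B_Q'-B_Q$ with $(B_Q-w)\cup w'\in\mathcal{B}(Q)$. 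The element $w'$ must lie in $Y-X$: if $w'\in X$, then $(B_Q-w)\cup w'$ would be a basis of $Q$ inside $X-u$ missing $w$, contradicting $w\in S$. The concluding step is to analyze the fundamental $L$-circuit of $w'$ over $X-u$ together with the $Q$-basis $(B_Q-w)\cup w'$ this exchange provides, showing that some element in that circuit can be used to complete either a level-$i$ move or a level-$(i-2)$ move, contradicting the assumed failure of both and thereby establishing (SE).
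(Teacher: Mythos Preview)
Your argument has a genuine gap in the final case, where $u\in X-Y$, $X-u$ spans $Q$, $i-1\notin K$, and $i\geq 2$ with $i-2\in K$. You assume both the level-$i$ and level-$(i-2)$ moves fail and set out to derive a contradiction, but the ``concluding step'' is only an outline, and as stated it does not work. Concretely: from $w'\in Y-X$ you take the fundamental $L$-circuit $C$ of $w'$ over $X-u$, and you assert that some element of $C$ furnishes a valid move. But a level-$i$ move needs $v\in Y-X$ with $v\notin\cl_L(X-u)$, and the only such candidate in $C$ is $w'$ itself, which lies in $\cl_L(X-u)$ by the assumed failure; a level-$(i-2)$ move needs $v\in (X-u)-Y$ with $X-u-v$ still $Q$-spanning, and by your own assumption every such $v$ is a coloop of $Q|(X-u)$, so removing it destroys $Q$-spanningness. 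What the circuit $C$ together with the $Q$-basis $(B_Q-w)\cup w'$ \emph{does} yield (via the quotient property that $C$ is a union of $Q$-circuits) is that for some $c\in C\cap S$ the set $(X-u-c)\cup w'$ is $L$-independent and $Q$-spanning---but that set is $X\triangle\{u,c,w'\}$, a three-element symmetric difference landing at level $i-1\notin K$, so it is not of the required form $X\triangle\{u,v\}\in\mathcal{F}$.

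The paper sidesteps this difficulty entirely. Using Lemma~\ref{lem:higgsdual} and the fact that $H^i_{Q,L},\ldots,H^j_{Q,L}$ are themselves the Higgs lifts of $H^i_{Q,L}$ toward $H^j_{Q,L}$, the paper reduces checking (SE) for an arbitrary pair $X\in\mathcal{B}(H^i)$, $Y\in\mathcal{B}(H^j)$ to the extreme case $X\in\mathcal{B}(Q)$, $Y\in\mathcal{B}(L)$ (with $\{0,k\}\subseteq K$). At level $0$ there is no ``level-$(i-2)$'' option to worry about, so the case analysis collapses: for $u\in X-Y$ one uses ordinary basis exchange in $Q$, and for $u\in Y-X$ one splits on whether $X\cup u$ is $L$-independent, using in the dependent subcase that the unique $L$-circuit in $X\cup u$ is also a $Q$-circuit (since $Q$ is a quotient of $L$ and $X$ is a $Q$-basis). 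That reduction is the key idea you are missing.
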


\begin{proof}
  With the first part of Lemma~\ref{lem:higgsdual} and the observation
  that $H^i_{Q,L},H^{i+1}_{Q,L},\ldots,H^j_{Q,L}$ are the Higgs lifts
  of $H^i_{Q,L}$ toward $H^j_{Q,L}$, we may assume that
  $\{0,k\}\subseteq K$, and it suffices to check property (SE) for all
  triples $(X,Y,u)$, where $X\in\mathcal{B}(Q)$ and
  $Y\in\mathcal{B}(L)$ and $u\in X\btu Y$.  Bases of $L$ span $Q$, so
  $Y$ spans $Q$.  If $u\in X-Y$, then, since $Y$ spans $Q$, there is a
  $v\in Y-X$ for which $(X-u)\cup v$ is a basis of $Q$, so property
  (SE) holds.  Now assume that $u\in Y-X$.  Note that by the
  hypothesis, $K$ contains either $1$ or $2$.  First assume that
  $X\cup u$ is independent in $L$.  Thus, $X\cup u$ is a basis of
  $H^1_{Q,L}$, so taking $v=u$ verifies property (SE) if $1$ is in
  $K$.  Note that $X\cup u$ is independent in $H^{2}_{Q,L}$ and $Y$
  spans $H^{2}_{Q,L}$, so there is a $v\in Y-(X\cup u)$ with
  $X\cup \{u,v\}\in \mathcal{B}(H^{2}_{Q,L})$, so property (SE) holds
  if $2$ is in $K$.  Now assume that $X\cup u$ is dependent in $L$, so
  it contains a unique circuit, say $C$, of $L$.  Since $Y$ is a basis
  of $L$, we have $C\not\subseteq Y$, so fix a $v\in C-Y$.  By
  part~(\ref{lemlabel:quocir}) of Lemma~\ref{lem:quotviabases}, $C$ is
  a union of circuits of $Q$, and since $X$ is a basis of $Q$, the set
  $X\cup u$ contains a unique circuit of $Q$, so $C$ is a circuit of
  $Q$.  Now $v\in X-Y$ and $(X\cup u)-v$ is a basis of $Q$, as needed.
\end{proof}

We call the delta-matroids identified in Proposition~\ref{new31}
\emph{Higgs lift delta-matroids}.  If $K=\{0,1,2,\ldots ,k\}$, we have
the \emph{full Higgs lift delta-matroid of the pair $(Q,L)$}; they
were studied by Tardos~\cite{MR843383}, who called them generalized
matroids, and more recently in \cite{saturateddelta}, where they are
called saturated delta-matroids. If $k$ and all elements of $K$ are
even, we have the \emph{even Higgs lift delta-matroid of the pair
  $(Q,L)$}.

It is straightforward to obtain the following characterization of
the feasible sets in a Higgs lift delta-matroid.

\begin{lemma}\label{higgsfeas}
  A delta-matroid $D=(E,\mathcal{F})$ is a Higgs lift delta-matroid if
  and only if, for every set $F\subseteq E$, one of the following
  holds:
  \begin{enumerate}
  \item no set in $\mathcal{F}$ has cardinality $|F|$ or
  \item $F\in\mathcal{F}$ exactly when there exist sets
    $A\in\mathcal{B}(D_{\min})$ and $B\in\mathcal{B}(D_{\max})$ such
    that $A\subseteq F\subseteq B$.
  \end{enumerate}
\end{lemma}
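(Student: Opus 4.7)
The plan is to use the description, given in the excerpt, of the bases of $H^i_{Q,L}$: these are precisely the sets of size $r(Q)+i$ that contain a basis of $Q$ and are contained in a basis of $L$. With this in hand, condition~(2) of the lemma is essentially the assertion that the feasible sets of size $r(Q)+i$ are the bases of $H^i_{Q,L}$ with $Q=D_{\min}$ and $L=D_{\max}$.

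For the forward direction, suppose $\mathcal{F}=\bigcup_{i\in K}\mathcal{B}(H^i_{Q,L})$ as in Proposition~\ref{new31}. Using the observation from the proof of that proposition that the Higgs lifts of $Q$ toward $L$ between indices $\min K$ and $\max K$ can be reindexed as Higgs lifts of $H^{\min K}_{Q,L}$ toward $H^{\max K}_{Q,L}$, one may assume $Q=D_{\min}$ and $L=D_{\max}$. Then condition~(2) holds at each cardinality $r(Q)+i$ with $i\in K$, while Lemma~\ref{lem:containment}, together with the fact that the cardinalities represented in $\mathcal{F}$ are exactly $\{r(Q)+i:i\in K\}$, yields condition~(1) for every other cardinality.

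For the backward direction, set $Q=D_{\min}$, $L=D_{\max}$, $k=r(L)-r(Q)$, and $K=\{i\in\{0,\ldots,k\}:\text{some feasible set has size }r(Q)+i\}$. Proposition~\ref{prop:bouchetquotient} tells us that $Q$ is a quotient of $L$, and Lemma~\ref{lem:containment} gives $\{0,k\}\subseteq K$. Condition~(2) identifies the feasible sets of size $r(Q)+i$ with the bases of $H^i_{Q,L}$ for each $i\in K$, and condition~(1) discards every other cardinality, so $\mathcal{F}=\bigcup_{i\in K}\mathcal{B}(H^i_{Q,L})$.

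The step I expect to require the most care is verifying that $K$ is admissible in the sense of Proposition~\ref{new31}, i.e.\ that $\{0,\ldots,k\}\setminus K$ contains no two consecutive integers; without this the construction above does not exhibit $D$ as a Higgs lift delta-matroid. Suppose for contradiction that $i,i+1\notin K$, and let $j$ be the largest element of $K$ below $i$ and $j'$ the smallest above $i+1$. Fix a basis $A$ of $Q$ and extend it to a basis $B$ of $L$ (possible because every basis of a quotient is independent in the lift), and then pick nested sets $X\subseteq Y$ with $A\subseteq X\subseteq Y\subseteq B$, $|X|=r(Q)+j$, and $|Y|=r(Q)+j'$; both are feasible by condition~(2). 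Since $X-Y=\emptyset$, applying (SE) to $(X,Y,u)$ for any $u\in Y-X$ yields a feasible set $X\btu\{u,v\}$ of size $|X|+1$ or $|X|+2$; but by the choice of $j$ and the gap at $i,i+1$, neither $j+1$ nor $j+2$ lies in $K$, contradicting (SE). Hence $K$ is admissible and $D$ is a Higgs lift delta-matroid.
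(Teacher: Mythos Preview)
Your proof is correct. The paper does not actually supply a proof of this lemma---it simply states that the characterization ``is straightforward to obtain''---so there is nothing to compare against directly. What you have written fills in that gap cleanly.

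A couple of small remarks. In the forward direction, the appeal to Lemma~\ref{lem:containment} is unnecessary: once you have reindexed so that $Q=D_{\min}$ and $L=D_{\max}$, the fact that the represented cardinalities are exactly $\{r(Q)+i:i\in K\}$ already gives condition~(1) for the remaining cardinalities. In the backward direction, your verification that $K$ is admissible is the substantive step, and your argument is right: the crucial point is that by choosing $X\subseteq Y$ nested (which your sandwich $A\subseteq X\subseteq Y\subseteq B$ guarantees), the symmetric difference $X\btu Y$ equals $Y-X$, so the element $v$ produced by (SE) must lie in $Y-X$ and hence $X\btu\{u,v\}$ has size $|X|+1$ or $|X|+2$. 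Without nesting, (SE) could return a set of size $|X|$, and the contradiction would not follow. You handle this correctly.
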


The next result follows from Lemma~\ref{lem:containment} and the
description of the bases of Higgs lifts.

\begin{cor}\label{cor32}
  If $X$ is a feasible set in a delta-matroid $D$ and
  $i=|X|-r(D_{\min})$, then $X$ is a basis of the $i$-th Higgs lift of
  $D_{\min}$ toward $D_{\max}$.  Thus, $D$ is obtained from the full
  Higgs lift delta-matroid of the pair $(D_{\min},D_{\max})$ by
  removing some feasible sets that are not in
  $\mathcal{B}(D_{\min})\cup \mathcal{B}(D_{\max})$.
\end{cor}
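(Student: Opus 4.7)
The plan is to invoke Lemma~\ref{lem:containment} to sandwich the feasible set $X$ between a basis of $D_{\min}$ and a basis of $D_{\max}$, and then appeal to the explicit description of the bases of Higgs lifts recalled in Section~2.

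More precisely, let $X\in\mathcal{F}(D)$ and set $i=|X|-r(D_{\min})$. First, I would apply Lemma~\ref{lem:containment} to produce a basis $B'$ of $D_{\min}$ and a basis $B$ of $D_{\max}$ with $B'\subseteq X\subseteq B$. In particular, $|B'|\leq |X|\leq |B|$, so $0\leq i\leq r(D_{\max})-r(D_{\min})$. By Proposition~\ref{prop:bouchetquotient}, $D_{\min}$ is a quotient of $D_{\max}$, so the Higgs lift $H^{\,i}_{D_{\min},D_{\max}}$ is defined, and its bases are exactly the sets of size $r(D_{\min})+i$ that contain a basis of $D_{\min}$ and are contained in a basis of $D_{\max}$. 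Since $|X|=r(D_{\min})+i$, $B'\subseteq X$, and $X\subseteq B$, the set $X$ is a basis of $H^{\,i}_{D_{\min},D_{\max}}$, giving the first assertion.

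For the second assertion, recall that the full Higgs lift delta-matroid of $(D_{\min},D_{\max})$ has feasible sets $\bigcup_{i=0}^{k}\mathcal{B}(H^{\,i}_{D_{\min},D_{\max}})$, where $k=r(D_{\max})-r(D_{\min})$. The first part shows that every feasible set of $D$ lies in this union, so $D$ is obtained from the full Higgs lift delta-matroid by deleting some of its feasible sets. To see that none of the deleted sets lies in $\mathcal{B}(D_{\min})\cup\mathcal{B}(D_{\max})$, note that every basis of $D_{\min}$ (respectively $D_{\max}$) is, by definition of $D_{\min}$ and $D_{\max}$, a feasible set of $D$ of minimum (respectively maximum) cardinality, hence is not removed.

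No step is particularly difficult: the whole argument is a direct assembly of Lemma~\ref{lem:containment}, Proposition~\ref{prop:bouchetquotient}, and the basis description of Higgs lifts given just after equation~(\ref{Higgsrank}). The only mild point to be careful about is verifying that $i$ falls in the allowed range $\{0,1,\ldots,k\}$, which is automatic from the containment $B'\subseteq X\subseteq B$ and the equalities $|B'|=r(D_{\min})$ and $|B|=r(D_{\max})$.
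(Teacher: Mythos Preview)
Your argument is correct and is exactly the approach the paper indicates: it states that the corollary ``follows from Lemma~\ref{lem:containment} and the description of the bases of Higgs lifts,'' and your write-up simply spells this out, together with the obvious observation that the bases of $D_{\min}$ and $D_{\max}$ are already feasible in $D$.
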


Theorem~\ref{exdelta} addresses the question of which feasible
sets of the Higgs lift delta-matroid of a pair $(Q,L)$ can be removed
to yield delta-matroids.

Now we give an excluded-minor characterization of Higgs lift
delta-matroids.  We will use the following seven delta-matroids:
\begin{itemize}
\item $U_1 = (\{a,b\},\bigl\{\emptyset, \{a\},\{a,b\} \bigr\})$,
\item
  $U_2 = (\{a,b,c\},\bigl\{\emptyset, \{c\},\{a,b\},\{a,b,c\}
  \bigr\})$,
\end{itemize}
and, for $3\leq i\leq 7$, the even delta-matroid $U_i$ has ground set
$E=\{a,b,c,d\}$ and its feasible sets are $\emptyset$, $E$, and the
$2$-element sets given by the edges of the graph $G_i$ in
Figure~\ref{fig:Ui}.

\begin{figure}
  \centering
\begin{tikzpicture}
\tikzstyle{vertex}=[circle, draw, inner sep=0pt, minimum size=6pt]
\newcommand{\vertex}{\node[vertex]}

\vertex[fill] (a3) at (0,1) [label=above:{\small $a$}] {};
\vertex[fill] (b3) at (1,1) [label=above:{\small $b$}] {};
\vertex[fill] (c3) at (1,0) [label=below:{\small \rule{0pt}{7pt}$c$}] {};
\vertex[fill] (d3) at (0,0) [label=below:{\small $d$}] {};
\path[thick] (a3) edge (b3)
(c3) edge (d3)	;
\node at (0.5,-0.95) {\small $G_3$};
\vertex[fill] (a4) at (2.3,1) [label=above:{\small $a$}] {};
\vertex[fill] (b4) at (3.3,1) [label=above:{\small $b$}] {};
\vertex[fill] (c4) at (3.3,0) [label=below:{\small \rule{0pt}{7pt}$c$}] {};
\vertex[fill] (d4) at (2.3,0) [label=below:{\small $d$}] {};
\path[thick] (a4) edge (b4)  (b4) edge (c4)
(c4) edge (d4)	;
\node at (2.8,-0.95) {\small $G_4$};
\vertex[fill] (a5) at (4.6,1) [label=above:{\small $a$}] {};
\vertex[fill] (b5) at (5.6,1) [label=above:{\small $b$}] {};
\vertex[fill] (c5) at (5.6,0) [label=below:{\small \rule{0pt}{7pt}$c$}] {};
\vertex[fill] (d5) at (4.6,0) [label=below:{\small $d$}] {};
\path[thick] (a5) edge (b5)  (b5) edge (c5)  (a5) edge (d5)
(c5) edge (d5)	;
\node at (5.1,-0.95) {\small $G_5$};
\vertex[fill] (a6) at (6.9,1) [label=above:{\small $a$}] {};
\vertex[fill] (b6) at (7.9,1) [label=above:{\small $b$}] {};
\vertex[fill] (c6) at (7.9,0) [label=below:{\small \rule{0pt}{7pt}$c$}] {};
\vertex[fill] (d6) at (6.9,0) [label=below:{\small $d$}] {};
\path[thick] (a6) edge (b6)  (b6) edge (c6)  (a6) edge (c6)
(a6) edge (d6)	;
  \node at (7.4,-0.95) {\small $G_6$};
\vertex[fill] (a7) at (9.2,1) [label=above:{\small $a$}] {};
\vertex[fill] (b7) at (10.2,1) [label=above:{\small $b$}] {};
\vertex[fill] (c7) at (10.2,0) [label=below:{\small \rule{0pt}{7pt}$c$}] {};
\vertex[fill] (d7) at (9.2,0) [label=below:{\small $d$}] {};
\path[thick] (a7) edge (b7)  (b7) edge (c7)  (a7) edge (c7) (c7) edge (d7)
(a7) edge (d7)	;
  \node at (9.7,-0.95) {\small $G_7$};
\end{tikzpicture}
\caption{The graphs whose edges give the proper, nonempty feasible
  sets of $U_3$, $U_4$, $U_5$, $U_6$, and $U_7$, respectively.}
  \label{fig:Ui}
\end{figure}
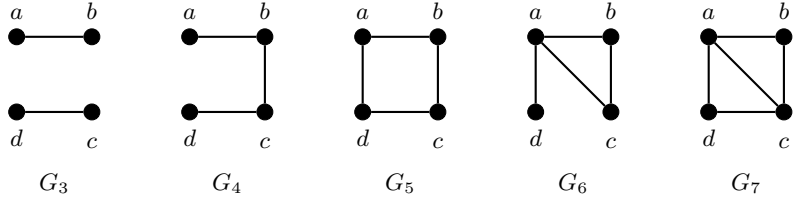

\begin{thm}\label{extraexhiggs}
  A delta-matroid is a Higgs lift delta-matroid if and only if it has
  no minor isomorphic to any of $U_1,U_2,\dots ,U_7$.
\end{thm}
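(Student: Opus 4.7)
The plan is to prove both directions. For the forward direction, I will show that the class of Higgs lift delta-matroids is minor closed and that each of $U_1, \ldots, U_7$ is not itself a Higgs lift delta-matroid. For the reverse direction, I will argue that any minor-minimal delta-matroid that fails to be a Higgs lift delta-matroid is isomorphic to some $U_i$.

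Minor closure follows from Lemma~\ref{lem:higgsdual}: if $D = (E, \bigcup_{i \in K} \mathcal{B}(H^i_{Q,L}))$ and $e \in E$ is not a loop of $D$, then the feasible sets of $D \setminus e$ are $\bigcup_{i \in K} \mathcal{B}(H^i_{Q \setminus e, L \setminus e})$, with an analogous statement (and an index shift $t = r_L(e) - r_Q(e)$) for contraction; the ``no two consecutive gaps'' condition on $K$ survives these operations, with possibly trimmed endpoints. For each $U_i$, that it is not a Higgs lift delta-matroid follows from Lemma~\ref{higgsfeas}: I identify $D_{\min}$ and $D_{\max}$ and exhibit a cardinality at which the sandwich condition predicts a feasible set absent from $\mathcal{F}$. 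For $U_1$ and $U_2$ this failure occurs at cardinality $1$, while for $U_3, \ldots, U_7$ both $D_{\min}$ and $D_{\max}$ are the zero and free matroids on $\{a,b,c,d\}$, so the sandwich condition at cardinality $2$ predicts every pair to be feasible, whereas the edge set of each $G_i$ is a proper subcollection of $\binom{\{a,b,c,d\}}{2}$.

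For the reverse direction, suppose $D = (E, \mathcal{F})$ is a minor-minimal delta-matroid that is not a Higgs lift delta-matroid. Lemmas~\ref{higgsfeas} and~\ref{lem:containment} supply a witness: a cardinality $i$ and sets $A \in \mathcal{B}(D_{\min})$, $B \in \mathcal{B}(D_{\max})$, $F$ with $A \subseteq F \subseteq B$, $F \notin \mathcal{F}$, together with an $F^* \in \mathcal{F}$ of size $i$ with companion bases $A^* \subseteq F^* \subseteq B^*$. The first step is a reduction showing that every element of $E$ must be essential to the witness: if some $e$ is a loop, a coloop, or outside $A \cup B \cup F \cup A^* \cup B^* \cup F^*$, then one of $D \setminus e$ or $D / e$ inherits the non-Higgs condition, contradicting minimality. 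A further round of contractions of elements of $A$ and deletions of elements of $E - B$ (justified by Lemma~\ref{minorisminor}) reduces $D$ to the case where $D_{\min}$ is the zero matroid and $D_{\max}$ is the free matroid on $E$, so $\emptyset$ and $E$ are the unique feasibles of their cardinalities.

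The \emph{main obstacle} is the ensuing case analysis on small ground sets. With $\emptyset$ and $E$ as the only extreme feasibles, the failure of the Higgs condition occurs at some interior layer $0 < i < |E|$ whose feasible sets form a proper nonempty subcollection of $\binom{E}{i}$. Applying (SE) to triples involving $\emptyset$, $E$, and an interior feasible gives strong constraints between layers, and minor-minimality forces $|E| \leq 4$. When $|E| = 2$ the only possibility is $D \cong U_1$; when $|E| = 3$ the symmetric exchange axiom applied to triples $(\emptyset, E, u)$ restricts the $1$- and $2$-element layers to match those of $U_2$; when $|E| = 4$, the case where the failure is at the even $2$-layer, the $2$-element feasibles form a graph $G$ on $\{a,b,c,d\}$, and (SE) between the three layers rules out candidates such as the star $K_{1,3}$ (which fails (SE) for the triple $(\emptyset, E, a)$ because no symmetric difference $E \triangle \{a,v\}$ is feasible), leaving exactly the five graphs $G_3, G_4, G_5, G_6, G_7$. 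The delicate step is the interplay between the multiple (SE) constraints and the minor-minimality assumption that together eliminate every graph other than these five.
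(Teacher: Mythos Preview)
Your forward direction is fine, though it inverts the logical order the paper uses: the paper proves the contrapositive directly (a $U_i$-minor forces a Higgs-lift violation in $D$ itself via Lemma~\ref{higgsfeas}) and \emph{deduces} minor-closure as a corollary, whereas you establish minor-closure first. Both routes are valid.

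The reverse direction, however, has a genuine gap at the reduction step. You assert that contracting $A$ and deleting $E-B$ ``reduces $D$ to the case where $D_{\min}$ is the zero matroid and $D_{\max}$ is the free matroid,'' but for this minor $D' = D/A\setminus(E-B)$ to remain a non--Higgs-lift delta-matroid you need a feasible set of size $|F|-|A|$ in $D'$, i.e., some $G\in\mathcal F(D)$ with $A\subseteq G\subseteq B$ and $|G|=|F|$. Your witness $F^*$ satisfies $|F^*|=|F|$ and $A^*\subseteq F^*\subseteq B^*$, but there is no reason $A\subseteq F^*\subseteq B$. If no such $G$ exists, then $D'$ simply has an empty layer at that cardinality, which is perfectly compatible with $D'$ being a Higgs lift delta-matroid; minimality of $D$ yields no contradiction. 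The subsequent claims that ``minor-minimality forces $|E|\le 4$'' and that in the $|E|=4$ case $D$ must be even (so that only the graphs $G_3,\dots,G_7$ survive) are likewise not justified; each would require a separate argument ruling out odd-layer feasibles via $U_1$- or $U_2$-minors.

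The paper's proof avoids all of this by not attempting to normalise $D_{\min}$ and $D_{\max}$. Instead it takes the least $k$ for which the layer $N_k$ is proper but differs from $\mathcal B(H^k)$, chooses $Y\in\mathcal F(N_k)$ and $X\in\mathcal B(H^k)-\mathcal F(N_k)$ with $|X\triangle Y|=2$, writes $X=A\cup x$, $Y=A\cup y$, and then applies (SE) to carefully selected triples (involving $A\cup y$, a basis of $D_{\min}$ contained in $X$, and a basis of $D_{\max}$ containing $X$) to force the existence of specific feasible and non-feasible sets. This pins down a concrete three- or four-element minor $D'=(D/(A-a))|\{a,x,y,z\}$ whose feasible sets are determined enough to be matched against the $U_i$ (or against $T_5,T_6,T_7,T_7^*$, which are not delta-matroids and hence yield a contradiction). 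The key device you are missing is this choice of $X,Y$ with symmetric difference $2$: it is what makes the (SE) applications tractable and replaces your unjustified global reduction.
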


The proof of the theorem is postponed until Section~\ref{sec:exmin}.
This result gives part of the next corollary; the rest is easy to
check.  The duality assertion uses the first part of
Lemma~\ref{lem:higgsdual}.

\begin{cor}
  The classes of Higgs lift delta-matroids,
  full Higgs lift delta-matroids, and even Higgs lift
  delta-matroids are closed under minors and duals.
\end{cor}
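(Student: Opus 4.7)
The corollary has three classes to handle, each requiring closure under minors and under duality. Minor-closure of the general class of Higgs lift delta-matroids is automatic from Theorem~\ref{extraexhiggs}: any class characterized by a finite list of excluded minors is minor-closed, since if $D$ has no minor isomorphic to any of $U_1,\ldots,U_7$, then neither does any minor of $D$.

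For dual-closure of all three classes, the plan is to use the first part of Lemma~\ref{lem:higgsdual}. Writing $D = \bigcup_{i \in K} \mathcal{B}(H^i_{Q,L})$ with $k = r(L) - r(Q)$, the identity $\mathcal{B}(M^*) = \{E - B : B \in \mathcal{B}(M)\}$ combined with Lemma~\ref{lem:higgsdual} gives
\[
D^* = \bigcup_{i \in K} \mathcal{B}\bigl((H^i_{Q,L})^*\bigr) = \bigcup_{i \in K} \mathcal{B}(H^{k-i}_{L^*, Q^*}) = \bigcup_{j \in K'} \mathcal{B}(H^j_{L^*, Q^*}),
\]
with $K' = \{k - i : i \in K\}$. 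By Lemma~\ref{lem:quotviabases}(2), $L^*$ is a quotient of $Q^*$ with parameter $k$, so the right-hand side is a Higgs lift delta-matroid of the pair $(L^*, Q^*)$. The set $\{0,\ldots,k\}\setminus K'$ is the reflection of $\{0,\ldots,k\}\setminus K$ about $k/2$, so it contains no two consecutive integers; moreover $K' = \{0,\ldots,k\}$ when $K$ is, and $K'$ consists of even integers when $K$ does and $k$ is even. Thus each class is dual-closed.

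For minor-closure of the full and even classes, the plan is to use the second part of Lemma~\ref{lem:higgsdual}, which says Higgs lifts commute with restriction and shift by $t = r_L(e) - r_Q(e) \in \{0,1\}$ under contraction. Given $D$ a full (or even) Higgs lift delta-matroid of $(Q,L)$ and an element $e$ that is not a coloop of $D$, a case analysis on whether $e$ is a coloop of $L$ identifies $D\ba e$ as a union of bases of Higgs lifts of $(Q\ba e, L\ba e)$. When $e$ is a coloop of $L$ (necessarily not of $Q$, since a coloop of $Q$ is a coloop of $L$ and hence of $D$), the index $i=k$ drops out because no basis of $L$ omits $e$; the remaining terms give the full Higgs lift of $(Q\ba e, L\ba e)$ (new parameter $k-1$) in the full case, and the even Higgs lift of $(Q\ba e, H^{k-2}_{Q\ba e, L\ba e})$ (new parameter $k-2$) in the even case. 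Contraction is handled symmetrically via the shift $t$, splitting on whether $e$ is a loop of $Q$; the even case may require replacing the minimal matroid by $H^1_{Q/e, L/e}$ to keep the parameter even. The main obstacle is this bookkeeping, but each case is settled mechanically once the commutation identity in Lemma~\ref{lem:higgsdual} is invoked, together with the transitivity property noted after Proposition~\ref{new31} that $H^i,H^{i+1},\ldots,H^j$ are the Higgs lifts of $H^i$ toward $H^j$.
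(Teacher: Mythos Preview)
Your proposal is correct and follows essentially the same approach as the paper. The paper's own treatment is extremely terse: it states that Theorem~\ref{extraexhiggs} yields minor-closure for the class of Higgs lift delta-matroids, that ``the rest is easy to check,'' and that the duality claim uses the first part of Lemma~\ref{lem:higgsdual}. Your proof supplies precisely these details---the same excluded-minor argument for minor-closure of the general class, the same use of $(H^i_{Q,L})^*=H^{k-i}_{L^*,Q^*}$ for duality (with the straightforward verification that reflecting $K$ about $k/2$ preserves the relevant conditions), and then the bookkeeping with the second part of Lemma~\ref{lem:higgsdual} to handle minors in the full and even cases.

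One very minor remark: the transitivity fact you invoke (that $H^i,\ldots,H^j$ are the Higgs lifts of $H^i$ toward $H^j$) is recorded in the proof of Proposition~\ref{new31} and in the paragraph preceding Lemma~\ref{lem:higgsdual}, not literally ``after'' Proposition~\ref{new31}; but this is a citation quibble, not a mathematical one.
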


Let $S_2$ be the delta-matroid
  $(\{a,b\},\{\emptyset ,\{a,b\}\})$.  We now characterize full Higgs
  lift delta-matroids and even Higgs lift delta-matroids by their
  excluded minors.

\begin{cor}
\label{exfull}
A delta-matroid is a full Higgs lift delta-matroid if and only if
it contains no minor isomorphic to $U_1$ or $S_2$.
\end{cor}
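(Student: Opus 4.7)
My plan is to use Theorem~\ref{extraexhiggs} as a black box and whittle its seven-element excluded-minor list down to just $\{U_1,S_2\}$ by showing that each of $U_2,\ldots,U_7$ already contains $S_2$ as a minor.

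For necessity, the class of full Higgs lift delta-matroids is minor-closed by the preceding corollary, so it is enough to check that neither $U_1$ nor $S_2$ is a full Higgs lift delta-matroid. In both cases $D_{\min}=U_{0,2}$ and $D_{\max}=U_{2,2}$, so the full Higgs lift delta-matroid of that pair would include all four subsets of $\{a,b\}$ as feasible sets; $U_1$ is missing $\{b\}$ and $S_2$ is missing both singletons.

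For sufficiency, assume $D$ has no $U_1$ or $S_2$ minor. I would begin with a brief case check that each $U_i$ with $2\leq i\leq 7$ contains $S_2$ as a minor: $U_2\del c=S_2$ directly, and for each $i\geq 3$ contracting both endpoints of any edge of $G_i$ leaves $(\{x,y\},\{\emptyset,\{x,y\}\})\cong S_2$ on the other two vertices, since the only feasible sets of $U_i$ containing such an edge are the edge itself and the full ground set. Thus $D$ excludes all of $U_1,\ldots,U_7$, and Theorem~\ref{extraexhiggs} then yields $\mathcal{F}(D)=\bigcup_{i\in K}\mathcal{B}(H^i_{Q,L})$ with $Q=D_{\min}$, $L=D_{\max}$, $k=r(L)-r(Q)$, and $\{0,1,\ldots,k\}-K$ containing no two consecutive integers.

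To finish I would show $K=\{0,1,\ldots,k\}$ by producing an $S_2$ minor from any hypothetical gap. If some $i$ with $0<i<k$ is missing from $K$, the gap condition forces $i-1,i+1\in K$. Choose a basis $A$ of $H^{i-1}_{Q,L}$; then $A$ contains a basis of $Q$, is independent in $L$, and has size $r(Q)+i-1\leq r(L)-2$, so it extends inside $L$ by two new elements $u,v$ to a basis $B=A\cup\{u,v\}$ of $H^{i+1}_{Q,L}$. Set $X=E-B$ and $Y=A$. Then $X\cap Y=\emptyset$ and $A\in\mathcal{F}(D)$ is disjoint from $X$ and contains $Y$, so equation~(\ref{eq:minor}) describes $D\del X/Y$ on ground set $\{u,v\}$ as having feasible sets $F-A$ for $F\in\mathcal{F}(D)$ with $A\subseteq F\subseteq B$. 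The endpoints $F=A$ and $F=B$ contribute $\emptyset$ and $\{u,v\}$; meanwhile $F=A\cup u$ and $F=A\cup v$ would have size $r(Q)+i$ and so would have to be bases of $H^i_{Q,L}$, impossible since $i\notin K$. Hence $D\del X/Y\cong S_2$, the desired contradiction, and therefore $K=\{0,1,\ldots,k\}$. The main care needed is in the two-element extension $A\subset B$ (which rests squarely on the characterization of bases of Higgs lifts) and in the finite case check for $U_2,\ldots,U_7$; both are elementary once the defining properties of Higgs lifts are unpacked.
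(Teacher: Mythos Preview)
Your proposal is correct and follows essentially the same route as the paper: verify $U_1$ and $S_2$ are not full Higgs lift delta-matroids, observe that each of $U_2,\ldots,U_7$ has an $S_2$ minor (so Theorem~\ref{extraexhiggs} applies), and then manufacture an $S_2$ minor from any gap in $K$ by sandwiching a missing level between two feasible sets that differ by exactly two elements. The only cosmetic differences are that the paper picks nested bases $B_Q\subseteq B_L$ of $D_{\min}$ and $D_{\max}$ first and then threads the chain through them, whereas you start directly from a basis of $H^{i-1}_{Q,L}$ and extend in $L$; and your phrase ``would have to be bases of $H^i_{Q,L}$, impossible since $i\notin K$'' should really read that they \emph{are} bases of $H^i_{Q,L}$ but are not feasible in $D$ because $i\notin K$.
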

\begin{proof}
  It is straightforward to check that $U_1$ and $S_2$ are excluded
  minors for the class of full Higgs lift delta-matroids.

  Suppose that the delta-matroid $D=(E,\mathcal F)$ is not a full
  Higgs lift delta-matroid. If $D$ is not a Higgs lift delta-matroid,
  then it has a minor in $\{U_1,U_2,\ldots,U_7\}$ and each of
  $U_2,U_3,\ldots,U_7$ has a minor isomorphic to $S_2$. Suppose that
  $D$ is a Higgs lift delta-matroid but not a full Higgs lift
  delta-matroid.  For $i$ with $0\leq i\leq r(D_{\max})-r(D_{\min})$,
  let $N_i$ be the set system
  $(E,\{F\in\mathcal F: |F|=i+r(D_{\min})\})$. Then for some $i$ with
  $0<i<r(D_{\max})-r(D_{\min})$, the set system $N_i$ is
  improper. Both $N_{i-1}$ and $N_{i+1}$ must be proper in order for
  $D$ to be a delta-matroid. Choose bases $B_Q$ and $B_L$ of
  $D_{\min}$ and $D_{\max}$ respectively with $B_Q\subseteq B_L$. Then
  there are sets $X$ and $Y$ belonging to $N_{i-1}$ and $N_{i+1}$
  respectively, satisfying
  $B_Q \subseteq X \subseteq Y \subseteq B_L$. So $D/X\setminus (E-Y)$
  is isomorphic to $S_2$.
 \end{proof}

 The next corollary follows because a delta-matroid is both even and a
 Higgs lift delta-matroid if and only if it is an even Higgs lift
 delta-matroid.

\begin{cor}
\label{exeven}
An even delta-matroid is an even Higgs lift delta-matroid if and only
if it contains no minor isomorphic to $U_3,U_4,U_5,U_6$, or $U_7$.
\end{cor}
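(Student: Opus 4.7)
The plan is to deduce Corollary~\ref{exeven} from Theorem~\ref{extraexhiggs} by combining two elementary observations: (i) a delta-matroid is an even Higgs lift delta-matroid if and only if it is both even and a Higgs lift delta-matroid; and (ii) of the seven excluded minors $U_1,\ldots,U_7$ for the class of Higgs lift delta-matroids provided by Theorem~\ref{extraexhiggs}, precisely the five even ones are $U_3,\ldots,U_7$.

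To establish (i), observe first that if $k$ and every element of $K$ are even, then the cardinalities $r(Q)+i$ of feasible sets of the Higgs lift delta-matroid of $(Q,L)$ pairwise differ by even integers, so the delta-matroid is even. Conversely, suppose $D$ is an even Higgs lift delta-matroid; write it using the canonical pair $(Q,L)=(D_{\min},D_{\max})$ and an index set $K$ containing $0$ and $k=r(L)-r(Q)$. Since every feasible set has cardinality $r(Q)+i$ for some $i\in K$, and $D$ is even, the elements of $K$ all share a common parity; as $0\in K$, every element of $K$, and in particular $k$, is even, which is the definition of an even Higgs lift delta-matroid. For (ii), a direct look at the definitions suffices: the feasible sets of $U_1$ have sizes $0,1,2$, and those of $U_2$ have sizes $0,1,2,3$, so neither is even; whereas the feasible sets of each of $U_3,\ldots,U_7$ have sizes lying in $\{0,2,4\}$, so these set systems are even.

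With (i) and (ii) in hand, the corollary follows quickly. If $D$ is an even Higgs lift delta-matroid, then by (i) it is a Higgs lift delta-matroid, so Theorem~\ref{extraexhiggs} forbids $U_1,\ldots,U_7$, and in particular $U_3,\ldots,U_7$, from appearing as minors. Conversely, suppose $D$ is an even delta-matroid with no minor isomorphic to any of $U_3,\ldots,U_7$. Since minors of even set systems are even and, by (ii), neither $U_1$ nor $U_2$ is even, $D$ has no minor isomorphic to $U_1$ or $U_2$ either. Thus $D$ has no minor among $U_1,\ldots,U_7$, so by Theorem~\ref{extraexhiggs} it is a Higgs lift delta-matroid; combined with being even, (i) upgrades this to an even Higgs lift delta-matroid.

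No step presents a genuine obstacle once Theorem~\ref{extraexhiggs} is granted. The only mild subtlety is in the converse direction of (i), where one must fix the canonical presentation $(Q,L)=(D_{\min},D_{\max})$ so that $0,k\in K$ and the parities of the elements of $K$ are intrinsic to $D$ rather than an artifact of the chosen representation.
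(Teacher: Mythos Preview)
Your proposal is correct and follows exactly the route the paper takes: the paper's entire justification is the sentence preceding the corollary, namely that a delta-matroid is an even Higgs lift delta-matroid if and only if it is both even and a Higgs lift delta-matroid, after which the result is immediate from Theorem~\ref{extraexhiggs}. You have simply supplied the details the paper leaves implicit, including a careful verification of that biconditional and of which $U_i$ are even; one small wording point is that in your converse direction for (i) the phrase ``suppose $D$ is an even Higgs lift delta-matroid'' should read ``suppose $D$ is an even delta-matroid that is a Higgs lift delta-matroid,'' since otherwise you are assuming what you aim to prove.
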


\section{Lattice path delta-matroids}\label{sec:lp}

In this section we define a class of full Higgs lift delta-matroids
using lattice paths.  This is a natural direction in which to extend
the theory of lattice path matroids, which has proven to be a rich
vein; for instance, see~\cite{lpmfacial, MR2718679, lpm2, lpmfm,
  MR3413585, lpmtoric, lpmpolytope, lpmTPineq, tennis, lpmcomput,
  MR2802156, MR2578897}.  The concrete nature of the delta-matroids
defined below may help readers get a better handle on delta-matroids,
and it may suggest new avenues of investigation.

We first recall lattice path matroids from~\cite{lpm1}.  (See
Figure~\ref{fig:lpmexamples} for illustrations.)  The lattice paths
that we consider are sequences of steps in $\mathbb{R}^2$, each of
unit length, each going north, $N$, or east, $E$.  Fix two lattice
paths $P$ and $Q$ from $(0,0)$ to a point $(m,r)$, where $P$ never
rises above $Q$.  Thus, for each $i$ with $1\leq i\leq r$, if the
$i$th north step of $P$ is in position $b_i$ in $P$, and the $i$th
north step of $Q$ is in position $a_i$ in $Q$, then $a_i\leq b_i$.
The paths $P$ and $Q$ bound a region $\mathcal{R}$ in $\mathbb{R}^2$;
let $\mathcal{P}$ be the set of lattice paths from $(0,0)$ to $(m,r)$
that remain in $\mathcal{R}$.  For $P' \in \mathcal{P}$, viewed as a
word in the alphabet $\{E,N\}$, let $b(P')$ be the set of positions in
$P'$ where $N$ occurs.  Note that the position, in a lattice path, of
any step that ends at $(s,t)$ is $s+t$, so if we put the label $s+t$
on the line segment (a north step) from $(s,t-1)$ to $(s,t)$, then
$b(P')$ is the set of labels on the north steps in the path $P'$.  As
shown in~\cite{lpm1}, the set $\{b(P')\,:\,P'\in\mathcal{P}\}$ is the
set of bases of a transversal matroid, denoted by $M[P,Q]$, and one
presentation of this transversal matroid is given by
$\{\{a_i,a_i+1,\ldots,b_i\}\,:\,1\leq i\leq r\}$; these sets are the
sets of labels on the North steps in a fixed row of the lattice path
diagram.  A lattice path matroid is a matroid that is isomorphic to
some such matroid $M[P,Q]$.

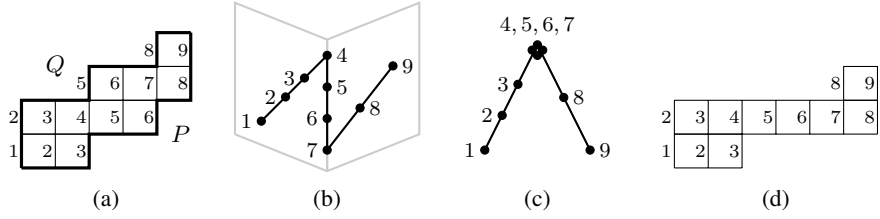
\begin{figure}
  \centering
  \begin{tikzpicture}[scale=0.45]
    \draw  (0,0)  grid  (2, 1);
    \draw  (0,1)  grid  (4, 2);
    \draw  (2,2)  grid  (5, 3);
    \draw  (4,3)  grid  (5, 4);
    \draw[very thick](0,0)--(2,0)--(2,1)--(4,1)--(4,2)--(5,2)
    --(5,2)--(5,4);
    \node at (4.7,1) {\small $P$};
    \draw[very thick](0,0)--(0,2)--(2,2)--(2,3)--(4,3)--(4,4)--(5,4);
    \node at (1,3) {\small $Q$};
    \node at (-0.23,0.5) {\scriptsize $1$};
    \node at (0.77,0.5) {\scriptsize $2$};
    \node at (1.77,0.5) {\scriptsize $3$};
    \node at (-0.23,1.5) {\scriptsize $2$};
    \node at (0.77,1.5) {\scriptsize $3$};
    \node at (1.77,1.5) {\scriptsize $4$};
    \node at (2.77,1.5) {\scriptsize $5$};
    \node at (3.77,1.5) {\scriptsize $6$};
    \node at (1.77,2.5) {\scriptsize $5$};
    \node at (2.77,2.5) {\scriptsize $6$};
    \node at (3.77,2.5) {\scriptsize $7$};
    \node at (4.77,2.5) {\scriptsize $8$};
    \node at (3.77,3.5) {\scriptsize $8$};
    \node at (4.77,3.5) {\scriptsize $9$};
    \node at (2.5,-0.95) {\small (a)};
  \end{tikzpicture}\hspace{10pt}
\begin{tikzpicture}[scale=0.7]
\draw[thick, black!20]
(0,0.5)--(1.75,-0.2)--(3.5,0.5)--(3.5,3)--(1.75,2.3)--(0,3)--(0,0.5);
\draw[thick, black!20] (1.75,-0.2)--(1.75,2.3);
\draw[thick, black!20] (1.75,-0.2)--(1.75,2.3);
\draw[thick](0.5,0.75)--(1.75,2)--(1.75,0.2)--(3,1.8);
\filldraw (0.5,0.75) node[left] {\footnotesize $1$} circle  (2.2pt);
\filldraw (0.96,1.21) node[left] {\footnotesize $2$} circle  (2.2pt);
\filldraw (1.32,1.57) node[left] {\footnotesize $3$} circle  (2.2pt);
\filldraw (1.75,2) node[right] {\footnotesize $4$} circle  (2.2pt);
\filldraw (1.75,1.4) node[right] {\footnotesize $5$} circle  (2.2pt);
\filldraw (1.75,0.8) node[left] {\footnotesize $6$} circle  (2.2pt);
\filldraw (1.75,0.2) node[left] {\footnotesize $7$} circle  (2.2pt);
\filldraw (2.375,1) node[right] {\footnotesize $8$} circle  (2.2pt);
\filldraw (3,1.8) node[right] {\footnotesize $9$} circle  (2.2pt);
    \node at (1.75,-0.75) {\small (b)};
  \end{tikzpicture}\hspace{10pt}
\begin{tikzpicture}[scale=0.7]
\draw[thick](0,0)--(1,2)--(2,0);
\filldraw (0.0,0.0) node[left] {\footnotesize $1$} circle  (2.2pt);
\filldraw (0.33,0.66) node[left] {\footnotesize $2$} circle  (2.2pt);
\filldraw (0.63,1.25) node[left] {\footnotesize $3$} circle  (2.2pt);
\filldraw (1,2) node[above] {\footnotesize $4,5,6,7$} circle  (2.2pt);
\filldraw (0.9,1.9) circle  (2.2pt);
\filldraw (1.1,1.9) circle  (2.2pt);
\filldraw (1,1.8) circle  (2.2pt);
\filldraw (1.5,1) node[right] {\footnotesize $8$} circle  (2.2pt);
\filldraw (2,0) node[right] {\footnotesize $9$} circle  (2.2pt);
    \node at (1,-0.95) {\small (c)};
  \end{tikzpicture}\hspace{10pt}
  \begin{tikzpicture}[scale=0.45]
    \draw  (0,0)  grid  (2, 1);
    \draw  (0,1)  grid  (6, 2);
    \draw  (5,2)  grid  (6, 3);
    \node at (-0.23,0.5) {\scriptsize $1$};
    \node at (0.77,0.5) {\scriptsize $2$};
    \node at (1.77,0.5) {\scriptsize $3$};
    \node at (-0.23,1.5) {\scriptsize $2$};
    \node at (0.77,1.5) {\scriptsize $3$};
    \node at (1.77,1.5) {\scriptsize $4$};
    \node at (2.77,1.5) {\scriptsize $5$};
    \node at (3.77,1.5) {\scriptsize $6$};
    \node at (4.77,1.5) {\scriptsize $7$};
    \node at (5.77,1.5) {\scriptsize $8$};
    \node at (4.77,2.5) {\scriptsize $8$};
    \node at (5.77,2.5) {\scriptsize $9$};
    \node at (3,-0.95) {\small (d)};
  \end{tikzpicture}
  \caption{Examples of (a) the region of interest, (b) the lattice
    path matroid it gives, which is the transversal matroid that has
    the presentation
    $\{\{1,2,3\},\{2,\ldots,6\},\{5,\ldots,8\},\{8,9\}\}$, (c) a
    quotient of that matroid, and (d) a region that yields the
    quotient.}
  \label{fig:lpmexamples}
\end{figure}

To extend this construction to delta-matroids, we take regions that
are bounded by a pair of lattice paths as
Figure~\ref{fig:pathsgeneral} illustrates.  Specifically, we have four
lattice points $s_P = (0,0)$, $s_Q = (-d,d)$, $t_Q = (u,v)$, and
$t_P = (u+c,v-c)$ where $v-c\geq d$ and $u,c,d\geq 0$, and two lattice
paths, $P$ from $s_P$ to $t_P$, and $Q$ from $s_Q$ to $t_Q$, with $P$
never crossing $Q$.  These two paths, the line through $s_P$ and
$s_Q$, and that through $t_P$ and $t_Q$, bound a region in
$\mathbb{R}^2$, which we denote by $\mathcal{R}$.  Label the lattice
points between $s_P$ and $s_Q$ as shown, and do likewise for those
between $t_P$ and $t_Q$.  We label each north step in $\mathcal{R}$
from $1$ to $u+v$ according to the sum of the coordinates of its
higher endpoint, and we let $E$ be the set $\{1,2,\ldots,u+v\}$ of all
such labels.  Let $\mathcal{P}$ be the set of lattice paths from some
$s_i$ to some $t_j$ that remain in $\mathcal{R}$.  With each path
$P'\in \mathcal{P}$, let $b(P')$ be the set of labels on its north
steps.  The set
\[\{b(P')\,:\,P'\in\mathcal{P} \text{ going from } s_Q \text{ to } t_P\}\]
is the set of bases of a lattice path matroid on $E$, which we denote
by $M(\mathcal{R}_{\min})$.  Likewise,
\[\{b(P')\,:\,P'\in\mathcal{P} \text{ going from } s_P \text{ to }
  t_Q\}\] is the set of bases of a lattice path matroid on $E$, which
we denote by $M(\mathcal{R}_{\max})$.  Below we show that
$M(\mathcal{R}_{\min})$ is a quotient of $M(\mathcal{R}_{\max})$ and
that the sets $b(P')$, over all $P' \in \mathcal{R}$, are the feasible
sets of the full Higgs lift delta-matroid for this pair of matroids.
It is not hard to check that there is no region $\mathcal{R}$ for
which $M(\mathcal{R}_{\max})$ and $M(\mathcal{R}_{\min})$ are
isomorphic to the two matroids in Figure~\ref{fig:lpmexamples}.  Thus,
this construction does not yield all quotient-lift pairs of lattice
path matroids.

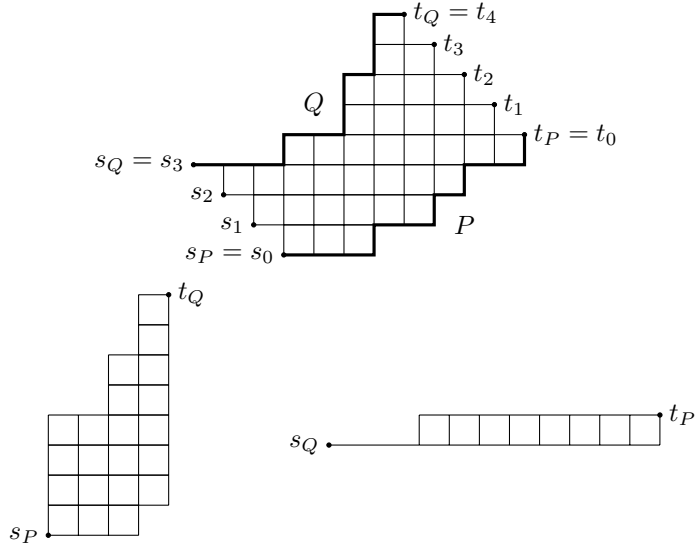
\begin{figure}
  \centering
  \begin{tikzpicture}[scale=0.4]
    \draw  (3,-1)  grid  (6,0);
    \draw  (2,0)  grid  (8, 1);
    \draw  (1,1)  grid  (9, 2);
    \draw  (3,2)  grid  (11, 3);
    \draw  (5,3)  grid  (10, 4);
    \draw  (5,4)  grid  (9, 5);
    \draw  (6,5)  grid  (8, 6);
    \draw  (6,6)  grid  (7, 7);
    \filldraw (11,3) node[right] {$t_P=t_0$} circle  (2pt);
    \filldraw (10,4) node[right] {$t_1$} circle  (2pt);
    \filldraw (9,5) node[right] {$t_2$} circle  (2pt);
    \filldraw (8,6) node[right] {$t_3$} circle  (2pt);
    \filldraw (7,7) node[right] {$t_Q=t_4$} circle  (2pt);
    \filldraw (3,-1) node[left] {$s_P=s_0$} circle  (2pt);
    \filldraw (2,0) node[left] {$s_1$} circle  (2pt);
    \filldraw (1,1) node[left] {$s_2$} circle  (2pt);
    \filldraw (0,2) node[left] {$s_Q=s_3$} circle  (2pt);
    \draw[very thick](3,-1)--(6,-1)--(6,0)--(8,0)--(8,1)--(9,1)--(9,2)--(11,2)--(11,3);
    \node at (9,0) {$P$};
    \draw[very thick] (0,2)--(3,2)--(3,3)--(5,3)--(5,5)--(6,5)--(6,7)--(7,7);
    \node at (4,4) {$Q$};
  \end{tikzpicture}
\\
  \begin{tikzpicture}[scale=0.4]
    \draw  (3,-1)  grid  (6,0);
    \draw  (3,0)  grid  (7, 1);
    \draw  (3,1)  grid  (7, 2);
    \draw  (3,2)  grid  (7, 3);
    \draw  (5,3)  grid  (7, 4);
    \draw  (5,4)  grid  (7, 5);
    \draw  (6,5)  grid  (7, 6);
    \draw  (6,6)  grid  (7, 7);
    \filldraw (7,7) node[right] {$t_Q$} circle  (2pt);
    \filldraw (3,-1) node[left] {$s_P$} circle  (2pt);
  \end{tikzpicture}
\hspace{20pt}
  \begin{tikzpicture}[scale=0.4]
    \draw  (3,2)  grid  (11, 3);
    \filldraw[white] (3,-1) node[left] {$s_P$} circle  (0pt);
    \filldraw (11,3) node[right] {$t_P$} circle  (2pt);
    \filldraw (0,2) node[left] {$s_Q$} circle  (2pt);
    \draw(0,2)--(3,2);
  \end{tikzpicture}
  \caption{Above, a typical region of interest.  Below, the lattice
    path representations of the two associated lattice path matroids,
    $M(\mathcal{R}_{\max})$ and $M(\mathcal{R}_{\min})$.}
  \label{fig:pathsgeneral}
\end{figure}

\begin{prop}\label{prop:genlpquot}
  With the notation above,
  \begin{enumerate}
  \item $M(\mathcal{R}_{\min})$ is a quotient of $M(\mathcal{R}_{\max})$, and
  \item the map $P'\mapsto b(P')$ is a surjection from $\mathcal{P}$
    onto the set of feasible sets of the full
    Higgs lift delta-matroid of
    the pair $(M(\mathcal{R}_{\min}), M(\mathcal{R}_{\max}))$.
  \end{enumerate}
\end{prop}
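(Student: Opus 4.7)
The plan is to construct an auxiliary lattice path matroid $\hat M$ which packages both matroids of the pair as a contraction and a deletion by the same set. Introduce the extended region $\hat{\mathcal R}$ bounded below by the lattice path $\hat P$ that begins with $d$ east steps from $(-d,0)$ to $s_P$, then traces $P$ to $t_P$, then takes $c$ north steps to $(u+c,v)$, and bounded above by the lattice path $\hat Q$ that begins with $d$ north steps from $(-d,0)$ to $s_Q$, then traces $Q$ to $t_Q$, then takes $c$ east steps to $(u+c,v)$. Since $P$ lies weakly below $Q$, so does $\hat P$ below $\hat Q$, and $\hat{\mathcal R}$ is a valid lattice path region; let $\hat M$ denote its lattice path matroid, with bases the north-step label sets of paths from $(-d,0)$ to $(u+c,v)$. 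Because the label of the $k$th step equals $-d+k$, the ground set decomposes as $E \cup X$ with $|X|=c+d$: the set $X_1$ consists of the $d$ labels at positions $1,\ldots,d$, and $X_2$ consists of the $c$ labels at positions $d+u+v+1,\ldots,d+u+v+c$.

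Part~(1) follows from the identifications $\hat M\del X = M(\mathcal R_{\max})$ and $\hat M/X = M(\mathcal R_{\min})$, combined with the general fact that $\hat M/X$ is a quotient of $\hat M\del X$. A basis of $\hat M$ disjoint from $X$ arises from a path whose first $d$ and last $c$ positions are all east steps; this forces its middle $u+v$ steps to traverse $\mathcal R$ from $s_P$ to $t_Q$, so such bases are exactly $\mathcal B(M(\mathcal R_{\max}))$. Since $X$ is independent in $\hat M$ (any path from $s_Q$ to $t_P$ in $\mathcal R$ extends by the initial and terminal runs of north steps), a basis of $\hat M/X$ is a set $Y\subseteq E$ with $Y\cup X\in\mathcal B(\hat M)$; the corresponding path has all first $d$ and last $c$ positions as north steps, so its middle traces a path in $\mathcal R$ from $s_Q$ to $t_P$, exhibiting $Y\in\mathcal B(M(\mathcal R_{\min}))$.

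For part~(2), by Lemma~\ref{higgsfeas} together with part~(1), the feasible sets of the full Higgs lift delta-matroid of $(M(\mathcal R_{\min}),M(\mathcal R_{\max}))$ are exactly the sets $F\subseteq E$ with $F\cup S\in\mathcal B(\hat M)$ for some $S\subseteq X$. Every path $\hat P'$ from $(-d,0)$ to $(u+c,v)$ in $\hat{\mathcal R}$ satisfies $x+y=-d+k$ after $k$ steps, so it meets the diagonal $x+y=0$ at step $d$ at a unique $s_i$ and meets the diagonal $x+y=u+v$ at step $d+u+v$ at a unique $t_j$; the middle $u+v$ steps of $\hat P'$ then form a path $P'\in\mathcal P$ from $s_i$ to $t_j$ (staying in $\mathcal R$ because the middle's $x+y$ values lie in $[0,u+v]$), and $b(P')$ equals the $E$-part of $b(\hat P')$. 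Conversely, every $P'\in\mathcal P$ extends to such a $\hat P'$ by attaching suitable boundary paths from $(-d,0)$ to $s_i$ and from $t_j$ to $(u+c,v)$. Hence $P'\mapsto b(P')$ surjects onto the feasible sets.

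The main obstacle is the careful geometric matching between bases of $\hat M$ and paths: in particular, showing that bases disjoint from $X$ (respectively containing all of $X$) are forced to have the claimed structure because those constraints pin down east-only (respectively north-only) behavior in the first $d$ and last $c$ positions. Once this is in place, the rest is the monotonicity of $x+y$, which organically separates a $\hat{\mathcal R}$-path into a SW-corner segment, a $\mathcal R$-segment, and a NE-corner segment.
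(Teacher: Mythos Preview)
Your proof is correct and takes a genuinely different route from the paper's. The paper proves part~(1) by directly verifying the basis-exchange criterion in Lemma~\ref{lem:quotviabases}(\ref{lemlabel:quobasis}): given a basis $B$ of $M(\mathcal{R}_{\max})$ and $e\notin B$, it constructs a suitable $B'\subseteq B$ by truncating the path $P_B$ with east runs at both ends, then argues geometrically that the required fundamental-circuit containment holds. For part~(2), the paper shows one containment by sandwiching $b(P')$ between explicit bases, and proves the reverse containment by an induction on $|B_1-B|$, with a case analysis on the location of the first ``bad'' row. Your approach sidesteps all of this: by realising $M(\mathcal{R}_{\min})$ and $M(\mathcal{R}_{\max})$ as $\hat M/X$ and $\hat M\del X$ for a single lattice path matroid $\hat M$, the quotient relation is immediate from the definition, and the surjection in part~(2) reduces to the transparent observation that every path in $\hat{\mathcal R}$ restricts to a path in $\mathcal R$ between some $s_i$ and $t_j$, and conversely. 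This is cleaner and more conceptual; the paper's argument, by contrast, is self-contained and does not rely on constructing an auxiliary matroid.

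One point you should make explicit: the step ``by Lemma~\ref{higgsfeas} together with part~(1), the feasible sets \ldots\ are exactly the sets $F\subseteq E$ with $F\cup S\in\mathcal B(\hat M)$ for some $S\subseteq X$'' needs a short justification. Lemma~\ref{higgsfeas} gives that $F$ is feasible if and only if it contains a basis of $\hat M/X$ and is contained in a basis of $\hat M\del X$; equivalently, $F$ is independent in $\hat M$ and $F\cup X$ is spanning in $\hat M$. That this is the same as $F\cup S\in\mathcal B(\hat M)$ for some $S\subseteq X$ is a standard matroid fact (extend the independent set $F$ to a basis inside the spanning set $F\cup X$), but it is not literally the content of Lemma~\ref{higgsfeas}, so a sentence making this bridge would close the gap.
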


\begin{proof}
  Let $B$ be a basis of $M(\mathcal{R}_{\max})$.  Fix $e$ in $E-B$.
  We will verify the condition in part~(\ref{lemlabel:quobasis}) of
  Lemma~\ref{lem:quotviabases}.  View $B$ as a lattice path, say
  $B=b(P_B)$.  To get the required basis $B'$ of
  $M(\mathcal{R}_{\min})$ (viewed as a lattice path, $P_{B'}$), take
  east steps from $s_Q$ until $P_B$ is reached, then follow $P_B$
  until a final sequence of east steps goes directly to $t_P$.  (See
  Figure~\ref{fig:pathsgeneralsketch}.)  Assume that $f\in B'$ and
  $(B'\cup e)-f$ is a basis of $M(\mathcal{R}_{\min})$.  Note that
  paths $P_B$ and $P_{B'}$ share step $f$.
  Figure~\ref{fig:pathsgeneralswap} compares the paths that correspond
  to $B'$ and $(B'\cup e)-f$.  It follows that if $P_B$ and $P_{B'}$
  share step $e$, then since the path corresponding to $(B'\cup e)-f$
  stays in $\mathcal{R}$, and between steps $e$ and $f$ the paths that
  correspond to $(B'\cup e)-f$ and $(B\cup e)-f$ are identical, we
  have $(B\cup e)-f\in \mathcal{P}$.  If $P_B$ and $P_{B'}$ do not
  share step $e$, then we may assume by symmetry that $e$ is after the
  last step that $P_B$ and $P_{B'}$ share.  In this case the
  modifications of $P_B$ and $P_{B'}$ to get the paths for
  $(B\cup e)-f$ and $(B'\cup e)-f$ differ just in the sort of regions
  that are shaded with hatch lines in
  Figure~\ref{fig:pathsgeneralsketch}, which are in $\mathcal{R}$.
  Thus, these paths stay in $\mathcal{R}$, so
  $(B\cup e)-f\in \mathcal{P}$ and assertion (1) holds.

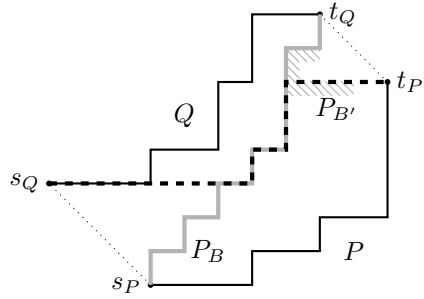
\begin{figure}
  \centering
  \begin{tikzpicture}[scale=0.45]
    \draw[dotted] (3,-1)--(0,2);
    \draw[dotted] (8,7)--(10,5);
\fill[pattern = north west lines, pattern color= black!30]
 (7,4.6)--(9,4.6)--(9,5)--(7,5)--(7,4.6);
\fill[pattern = north west lines, pattern color= black!30]
 (7,5)--(7.4,5)--(7.4,5.6)--(8,5.6)--(8,6)--(7,6)--(7,5);
    \filldraw (10,5) node[right] {$t_P$} circle  (2pt);
     \filldraw (8,7) node[right] {$t_Q$} circle  (2pt);
    \filldraw (3,-1) node[left] {$s_P$} circle  (2pt);
     \filldraw (0,2) node[left] {$s_Q$} circle  (2pt);
    \draw[thick](3,-1)--(6,-1)--(6,0)--(8,0)--(8,1)--(10,1)--(10,5);
    \node at (9,0) {$P$};
    \draw[thick] (0,2)--(3,2)--(3,3)--(5,3)--(5,5)--(6,5)--(6,7)--(8,7);
    \node at (4,4) {$Q$};
    \draw[ultra thick,black!30]
    (3,-1)--(3,0)--(4,0)--(4,1)--(5,1)--(5,2)--(6,2)--(6,3)--(7,3)
    --(7,6)--(8,6)--(8,7);
    \node at (4.7,0) {$P_B$};
    \draw[ultra thick,dashed]
    (0,2)--(6,2)--(6,3)--(7,3)--(7,5)--(10,5);
    \node at (8.5,4.2) {$P_{B'}$};
  \end{tikzpicture}
  \caption{A sketch of how to get the path $P_{B'}$ (dashed) from
    $P_B$ (in gray) in the proof of Proposition~\ref{prop:genlpquot}.}
  \label{fig:pathsgeneralsketch}
\end{figure}

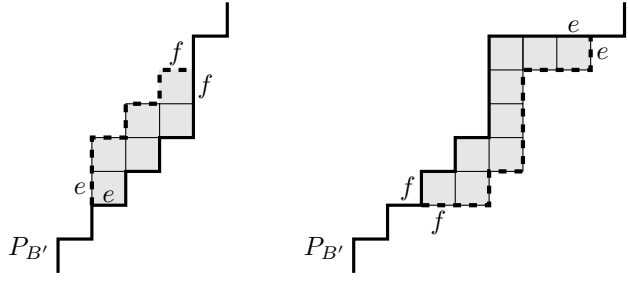
\begin{figure}
  \centering
  \begin{tikzpicture}[scale=0.45]
  \filldraw[color= black!10]
  (4,1) -- (4,3) -- (5,3) -- (5,4)--(6,4)--(6,5)--(7,5)--(7,3)--
  (6,3)--(6,2) --(5,2)--(5,1)--(4,1);
    \draw  (4,1)  grid  (5,3);
    \draw  (5,3)  grid  (7,4);
  \draw[very thick](3,-1)--(3,0)--(4,0)--(4,1)--(5,1)--(5,2)--(6,2)--(6,3)--(7,3)
    --(7,6)--(8,6)--(8,7);
  \draw[ultra thick, dashed](4,1)--(4,3)--(5,3)--(5,4)--(6,4)--(6,5)--(7,5);
  \node at (2.15,-0.25) {$P_{B'}$};
  \node at (7.35,4.5) {$f$};
  \node at (6.5,5.5) {$f$};
  \node at (4.5,1.25) {$e$};
  \node at (3.65,1.5) {$e$};
  \end{tikzpicture}
\hspace{20pt}
  \begin{tikzpicture}[scale=0.45]
  \filldraw[color= black!10]
  (5,1) -- (7,1) -- (7,2) -- (8,2)--(8,5)--(10,5)--(10,6)--(7,6)--(7,3)--
  (6,3)--(6,2) --(5,2)--(5,1);
    \draw  (8,6)  grid  (7,2);
    \draw  (8,6)  grid  (10,5);
    \draw  (5,1)  grid  (7,2);
  \draw[very thick](3,-1)--(3,0)--(4,0)--(4,1)--(5,1)--(5,2)--(6,2)--(6,3)--(7,3)
    --(7,6)--(11,6)--(11,7);
  \draw[ultra thick, dashed](5,1)--(7,1)--(7,2)--(8,2)--(8,5)--(10,5)--(10,6);
  \node at (2.15,-0.25) {$P_{B'}$};
  \node at (10.35,5.5) {$e$};
  \node at (9.5,6.3) {$e$};
  \node at (4.6,1.5) {$f$};
  \node at (5.5,0.5) {$f$};
  \end{tikzpicture}
  \caption{Exchanging $f$ for a smaller element $e$ diverts the solid
    path around the shaded region to the left, as the dashed path in
    the first part shows.  Exchanging $f$ for a larger element $e$
    diverts the path around the shaded region to the right, as the
    dashed path in the second part shows.
  }
  \label{fig:pathsgeneralswap}
\end{figure}

For part (2), consider a path $P'\in\mathcal{P}$, say from $s_u$ to
$t_v$, as in Figure~\ref{fig:pathsforHiggls}.  A subpath of $P'$ goes
from a point with the same $y$-coordinate as $s_Q$ to one with the
same $y$-coordinate as $t_P$, and the set of labels on the north steps
in that subpath is clearly a basis of $M(\mathcal{R}_{\min})$.
Figure~\ref{fig:pathsforHiggls} shows how to create a path $P''$ from
$s_P$ to $t_Q$ with $b(P')\subseteq b(P'')$.  Thus, for each path $P'$
in $\mathcal{P}$, the set $b(P')$ is a basis of a Higgs lift of
$M(\mathcal{R}_{\min})$ to $M(\mathcal{R}_{\max})$.

We turn to the converse, showing that each basis $B$ of each Higgs
lift of $M(\mathcal{R}_{\min})$ to $M(\mathcal{R}_{\max})$ is $b(P')$
for some $P'\in\mathcal{P}$, that is, if $B_0$ is a basis of
$M(\mathcal{R}_{\min})$ and $B_1$ is a basis of
$M(\mathcal{R}_{\max})$, and if $B_0\subseteq B\subseteq B_1$, then
$B=b(P')$ for some path $P'\in\mathcal{P}$.  We induct on $|B_1-B|$.
The base case, $B=B_1$, is obvious, so assume that $|B_1-B|>0$ and
that the assertion holds for all diagrams $\mathcal{R}'$ and triples
$B'_0\subseteq B'\subseteq B'_1$ where $B'_0$ is a basis of
$M(\mathcal{R}'_{\min})$ and $B'_1$ is a basis of
$M(\mathcal{R}'_{\max})$ and $|B'_1-B'|<|B_1-B|$.

Let $I_1$ be the interval of labels on the lowest row of north steps
in $\mathcal{R}$, and likewise for successive rows.  We call an
interval $I_j$ lower, middle, or upper according to whether the
corresponding row is below $s_Q$, between $s_Q$ and $t_P$, or above
$t_P$.  Let $P_{B_1}$ be the path with $b(P_{B_1})=B_1$.  We call an
interval good if the north step that $P_{B_1}$ uses in it is in $B$;
otherwise it is bad.  Since $|B_1-B|>0$, there is at least one bad
interval.

\begin{figure}
  \centering
  \begin{tikzpicture}[scale=0.45]
    \draw[dotted] (3,-1)--(0,2);
    \draw[dotted] (8,7)--(10,5);
    \filldraw (10,5) node[right] {$t_P$} circle  (2pt);
     \filldraw (8,7) node[right] {$t_Q$} circle  (2pt);
    \filldraw (3,-1) node[left] {$s_P$} circle  (2pt);
     \filldraw (0,2) node[left] {$s_Q$} circle  (2pt);
    \draw[thick](3,-1)--(6,-1)--(6,0)--(8,0)--(8,1)--(10,1)--(10,5);
    \node at (9,0) {$P$};
    \node at (6.8,2.5) {$P'$};
    \draw[thick] (0,2)--(3,2)--(3,3)--(5,3)--(5,5)--(6,5)--(6,7)--(8,7);
    \node at (4,4) {$Q$};
    \draw[ultra thick, dashed] (4,2)--(0,2);
    \draw[ultra thick, dashed] (9,5)--(10,5);
    \draw[very thick, dotted] (3,-1)--(3,1);
    \draw[very thick, dotted] (8,7)--(8,4);
    \draw[very thick,black!30]
    (2,0)--(2,1)--(4,1)--(4,2)--(6,2)--(6,3)--(7,3)--(7,4)--(9,4)--
   (9,5)--(9,6);
  \end{tikzpicture}
  \caption{The gray line is a path $P'$ from $s_u$ to $t_v$.  The
    dashed lines show that $b(P')$ contains a basis of
    $M(\mathcal{R}_{\min})$.  The dotted lines show that $b(P')$ is
    contained in a basis of $M(\mathcal{R}_{\max})$.}
  \label{fig:pathsforHiggls}
\end{figure}
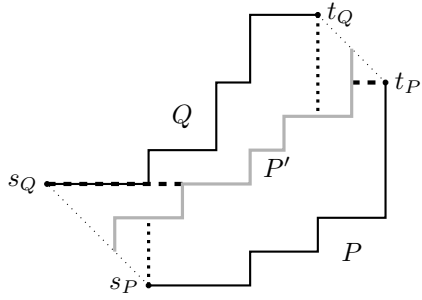

First assume that there is a bad lower interval, say $I_h$.  Let the
north step that $P_{B_1}$ uses in $I_h$ be labeled $x$, so
$x\in B_1-B$.  Each lower interval properly contains those below it,
so if we delete interval $I_1$ from the diagram (adjusting $P$ and
$s_P$ accordingly) to get a region $\mathcal{R}'$, then $B_1-x$ is a
basis of $M(\mathcal{R}'_{\max})$ and the induction hypothesis applies
to $\mathcal{R}'$, $B$, and $B_1-x$ since $|(B_1-x)-B|<|B_1-B|$. (The
path that corresponds to $B_1-x$ is obtained from $P_{B_1}$ by moving
each step before $x$ northwest and changing $x$ to an east step, as
shown in Figure~\ref{fig:pathsbadlower}, so the path remains in
$\mathcal{R}'$).  By induction there is a path $P'$ in $\mathcal{R}'$
with $b(P')=B$, and since $\mathcal{R}$ contains $\mathcal{R}'$, this
path $P'$ is also a path in $\mathcal{R}$, as we needed.

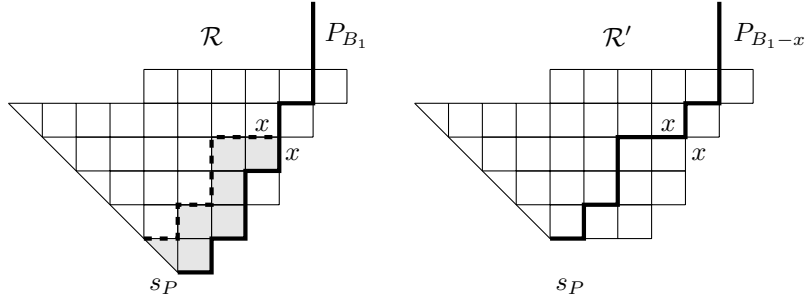
\begin{figure}
  \centering
  \begin{tikzpicture}[scale=0.45]
  \filldraw[color= black!10]
  (0,0) -- (-1,1) -- (0,1) -- (0,2) -- (1,2) -- (1,4)--(3,4)--(3,3)--
  (2,3)--(2,1) --(1,1)--(1,0)--(0,0);
    \draw  (0,0)  grid  (1,2);
    \draw  (1,2)  grid  (2,4);
    \draw  (-1,1)  grid  (2,2);
    \draw  (-2,2)  grid  (3,3);
    \draw  (-3,3)  grid  (3,4);
    \draw  (-4,4)  grid  (4,5);
        \draw  (-1,5)  grid  (5,6);
    \draw (-5,5)--(-4,5);
  \draw[ultra thick] (0,0)--(1,0)--(1,1)--(2,1)--(2,3)--(3,3)
    --(3,5)--(4,5)--(4,8);
  \draw[ultra thick, dashed] (-1,1)--(0,1)--(0,2)--(1,2)--(1,4)--(3,4);
  \draw (0,0)--(-5,5);
    \node at (1,7) {$\mathcal{R}$};
  \node at (5,7) {$P_{B_1}$};
  \node at (3.4,3.5) {$x$};
  \node at (2.5,4.35) {$x$};
  \node at (-0.4,-0.4) {$s_P$};
    \draw  (11,1)  grid  (14,2);
    \draw  (10,2)  grid  (15,3);
    \draw  (9,3)  grid  (15,4);
    \draw  (8,4)  grid  (16,5);
    \draw  (11,5)  grid  (17,6);
    \draw (7,5)--(8,5);
    \draw[ultra thick] (11,1)--(12,1)--(12,2)--(13,2)--(13,4)--(15,4)
    --(15,5)--(16,5)--(16,8);
  \draw (11,1)--(7,5);
  \node at (13,7) {$\mathcal{R}'$};
  \node at (17.5,7) {$P_{B_1-x}$};
  \node at (15.4,3.5) {$x$};
  \node at (14.5,4.35) {$x$};
  \node at (11.6,-0.4) {$s_P$};
  \end{tikzpicture}
  \caption{To treat a bad lower interval, replace the path $P_{B_1}$,
    shown in bold on the left, with $P_{B_1-x}$.  Only the lower rows
    of the diagrams are shown.}
  \label{fig:pathsbadlower}
\end{figure}

We can treat bad upper intervals similarly (deleting the top
interval), so now assume that the only bad intervals are middle
intervals.  When there are at least two bad middle intervals, we
choose which to process as follows.  Let $I_j$ and $I_k$ be the lowest
and highest such intervals, respectively.  Let $P_{B_0}$ be the path
with $b(P_{B_0})=B_0$.  Let the north step that $P_{B_1}$ uses in
$I_j$ be $x$, so $x\in B_1-B$, and let the north step that $P_{B_0}$
uses in $I_j$ be $y$, so $y\in B_0$, so $y\ne x$.  Let $x'$ and $y'$
be the elements of $B_1-B$ and $B_0$, respectively, defined in the
same way using $I_k$.  We cannot have both $x<y$ and $y'<x'$ since
$B_0\subseteq B_1$ and since $P_{B_1}$ and $P_{B_0}$ use exactly one
north step from each of $I_j,I_{j+1},\ldots,I_k$.  Now assume $y<x$.
(The case of $x'<y'$ is handled similarly, working with the intervals
above $I_k$.)  Let $I_h$ be the lowest middle interval, and let
$x_{h-1}<x_h<\cdots<x_j=x$ be the elements of $B_1$ that $P_{B_1}$
uses as north steps in $I_{h-1},I_h,\ldots,I_j$.  Likewise, let
$y_h<y_{h+1}<\cdots<y_j=y$ be the elements of $B_0$ that $P_{B_0}$
uses as north steps in $I_h,I_{h+1},\ldots,I_j$.  Since
$B_0\subseteq B_1$, from $y_j<x_j$, we get $y_i\leq x_{i-1}<x_i$ for
all $i$ with $h\leq i\leq j$; thus, $x_{i-1}\in I_i$.  From this, it
is easy to see that if we delete the interval $I_1$ from the diagram
to get a region $\mathcal{R}'$, then, as in the case we treated above,
the induction hypothesis applies to $\mathcal{R}'$, $B$, and $B_1-x$,
and yields the path $P'$ in $\mathcal{R}$ that we needed.
\end{proof}

We call the delta-matroids constructed above, and delta-matroids that
are isomorphic to them, \emph{lattice path delta-matroids}.

\begin{prop}
  The class of lattice path delta-matroids is closed under duals and
  minors.
\end{prop}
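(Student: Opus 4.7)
The plan is to establish the dual closure and the minor closure separately, using Lemma~\ref{lem:higgsdual} to transfer each operation from the delta-matroid down to the underlying pair of lattice path matroids, together with the known behaviour of lattice path matroids under duals and deletions.

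\emph{Dual closure.} Let $D$ be the lattice path delta-matroid of a region $\mathcal{R}$; by Proposition~\ref{prop:genlpquot}, $D$ is the full Higgs lift delta-matroid of the pair $(M(\mathcal{R}_{\min}), M(\mathcal{R}_{\max}))$.  Applying the first part of Lemma~\ref{lem:higgsdual} to each Higgs lift whose bases feed into $\mathcal{F}(D)$ shows that $D^*$ is the full Higgs lift delta-matroid of $(M(\mathcal{R}_{\max})^*, M(\mathcal{R}_{\min})^*)$.  Rotating $\mathcal{R}$ by $180^{\circ}$ about its centre produces a region $\widetilde{\mathcal{R}}$ in which the roles of the upper path $Q$ and the lower path $P$ are swapped.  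The standard duality for lattice path matroids (realised by this very rotation) then gives $M(\widetilde{\mathcal{R}}_{\min}) \cong M(\mathcal{R}_{\max})^*$ and $M(\widetilde{\mathcal{R}}_{\max}) \cong M(\mathcal{R}_{\min})^*$, so $D^*$ is the lattice path delta-matroid of $\widetilde{\mathcal{R}}$.

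\emph{Minor closure.} Thanks to dual closure, it suffices to show that $D\ba e$ is a lattice path delta-matroid whenever $e$ is not a coloop of $D$.  The idea is to build from $\mathcal{R}$ a region $\mathcal{R}'$ on the ground set $E-e$ by collapsing the $e$-th antidiagonal of north steps: in every row of $\mathcal{R}$ whose interval of labels contains $e$, delete the north step labelled $e$ and slide the rest of that row one unit west, then relabel by subtracting one from each label greater than $e$.  The resulting $\mathcal{R}'$ is still bounded by two non-crossing lattice paths, and the standard description of deletion for lattice path matroids gives $M(\mathcal{R}'_{\min}) = M(\mathcal{R}_{\min})\ba e$ and $M(\mathcal{R}'_{\max}) = M(\mathcal{R}_{\max})\ba e$.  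The second part of Lemma~\ref{lem:higgsdual} says that Higgs lifts commute with restriction, so the full Higgs lift delta-matroid of $(M(\mathcal{R}'_{\min}), M(\mathcal{R}'_{\max}))$ coincides with $D\ba e$, which is therefore a lattice path delta-matroid.

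The main obstacle is making the ``collapse the $e$-th antidiagonal'' construction precise and verifying that $\mathcal{R}'$ simultaneously realises $M(\mathcal{R}_{\min})\ba e$ and $M(\mathcal{R}_{\max})\ba e$.  This requires a short case analysis based on where $e$ sits within each row containing it (left endpoint, right endpoint, or interior), and on whether a row collapses entirely; once this geometric bookkeeping is in place, Lemma~\ref{lem:higgsdual} closes the argument cleanly.
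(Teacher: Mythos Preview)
Your reduction via Lemma~\ref{lem:higgsdual} is a natural idea, but the geometric realisation you give for duality is wrong.  A $180^\circ$ rotation of $\mathcal{R}$ sends each north step to a south step; after reversing the path so that it again runs northeast, the north step in position $k$ lands in position $|E|+1-k$.  Thus rotation merely relabels the ground set by $i\mapsto |E|+1-i$ and yields a delta-matroid isomorphic to $D$, not to $D^*$.  The operation that realises duality for lattice path matroids---and for lattice path delta-matroids---is reflection in the line $y=x$, which interchanges north and east and therefore complements every set $b(P')$.  That is exactly what the paper does, in one sentence.

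For minors your route can be made to work, but it is more circuitous than the paper's, and the step you flag as ``the main obstacle'' is genuinely the whole difficulty.  You must produce a single region $\mathcal{R}'$ whose extremal subregions realise \emph{simultaneously} $M(\mathcal{R}_{\min})\ba e$ and $M(\mathcal{R}_{\max})\ba e$ under one common labelling; since the paper notes just before Proposition~\ref{prop:genlpquot} that not every quotient pair of lattice path matroids arises from a region, this compatibility really has to be checked rather than asserted.  You also need some care when invoking Lemma~\ref{lem:higgsdual}: $e$ can be a coloop of $L=M(\mathcal{R}_{\max})$ without being a coloop of $D$ (take $Q=U_{0,2}$, $L=U_{2,2}$), in which case $r(L\ba e)-r(Q\ba e)$ drops and the layer-by-layer correspondence is not literally index-for-index; the identity $D\ba e=$ full Higgs lift of $(Q\ba e,L\ba e)$ still holds, but not for the one-line reason you give.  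The paper avoids all of this by arguing directly at the level of paths: after erasing the north steps labelled $e$, trimming any now-unreachable cells, and shrinking the east steps labelled $e$, the paths that remain in $\mathcal{R}'$ are precisely the paths of $\mathcal{R}$ that take east at step $e$, i.e.\ exactly the feasible sets of $D$ not containing $e$.  No appeal to Proposition~\ref{prop:genlpquot} or Lemma~\ref{lem:higgsdual} is needed.
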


\begin{figure}
  \centering
  \begin{tikzpicture}[scale=0.4]
    \draw  (3,-1)  grid  (4,0);
    \draw  (2,0)  grid  (4, 1);
    \draw  (1,1)  grid  (4, 2);
    \draw  (3,2)  grid  (4, 3);
    \draw  (5,3)  grid  (10, 4);
    \draw  (5,4)  grid  (9, 5);
    \draw  (6,5)  grid  (8, 6);
    \draw  (6,6)  grid  (7, 7);
    \filldraw (11,3) node[right] {$t_P$} circle  (2pt);
    \filldraw (10,4) node[right] {$t_1$} circle  (2pt);
    \filldraw (9,5) node[right] {$t_2$} circle  (2pt);
    \filldraw (8,6) node[right] {$t_3$} circle  (2pt);
    \filldraw (7,7) node[right] {$t_Q$} circle  (2pt);
    \filldraw (3,-1) node[left] {$s_P$} circle  (2pt);
    \filldraw (2,0) node[left] {$s_1$} circle  (2pt);
    \filldraw (1,1) node[left] {$s_2$} circle  (2pt);
    \filldraw (0,2) node[left] {$s_Q$} circle  (2pt);
    \draw[thick](3,-1)--(4,-1)--(4,3)--(11,3);
    \node at (7,2) {$P$};
    \draw[thick] (0,2)--(3,2)--(3,3)--(5,3)--(5,5)--(6,5)--(6,7)--(7,7);
    \node at (4,4) {$Q$};
    \draw[ultra thick,black!20](4,3)--(5,3);
    \node at (4.5,2.5) {$e$};
  \end{tikzpicture}
\hspace{20pt}
  \begin{tikzpicture}[scale=0.4]
    \draw  (3,-1)  grid  (4,0);
    \draw  (2,0)  grid  (4, 1);
    \draw  (1,1)  grid  (4, 2);
    \draw  (3,2)  grid  (4, 3);
    \draw  (4,3)  grid  (9, 4);
    \draw  (4,4)  grid  (8, 5);
    \draw  (5,5)  grid  (7, 6);
    \draw  (5,6)  grid  (6, 7);
    \filldraw (10,3) node[right] {$t_P$} circle  (2pt);
    \filldraw (9,4) node[right] {$t_1$} circle  (2pt);
    \filldraw (8,5) node[right] {$t_2$} circle  (2pt);
    \filldraw (7,6) node[right] {$t_3$} circle  (2pt);
    \filldraw (6,7) node[right] {$t_Q$} circle  (2pt);
    \filldraw (3,-1) node[left] {$s_P$} circle  (2pt);
    \filldraw (2,0) node[left] {$s_1$} circle  (2pt);
    \filldraw (1,1) node[left] {$s_2$} circle  (2pt);
    \filldraw (0,2) node[left] {$s_Q$} circle  (2pt);
    \draw[thick](3,-1)--(4,-1)--(4,3)--(10,3);
    \node at (7,2) {$P'$};
    \draw[thick] (0,2)--(3,2)--(3,3)--(4,3)--(4,5)--(5,5)--(5,7)--(6,7);
    \node at (3,4) {$Q'$};
  \end{tikzpicture}
  \caption{Deleting or contracting a loop, $e$ (in gray).}
  \label{fig:lpmminorloop}
\end{figure}
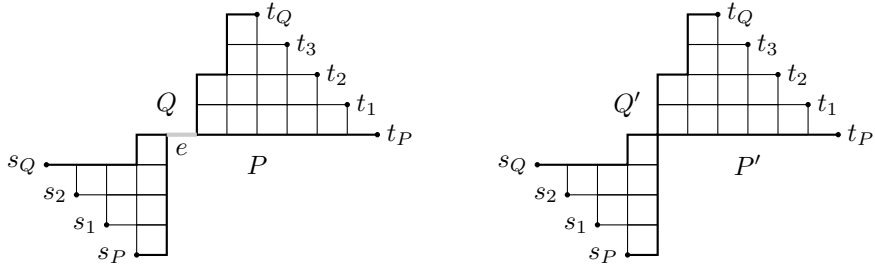

\begin{figure}
  \centering
  \begin{tikzpicture}[scale=0.4]
    \draw  (3,-1)  grid  (6,0);
    \draw  (2,0)  grid  (8, 1);
    \draw  (1,1)  grid  (9, 2);
    \draw  (3,2)  grid  (11, 3);
    \draw  (5,3)  grid  (10, 4);
    \draw  (5,4)  grid  (9, 5);
    \draw  (6,5)  grid  (8, 6);
    \draw  (6,6)  grid  (7, 7);
    \filldraw (11,3) node[right] {$t_P$} circle  (2pt);
    \filldraw (10,4) node[right] {$t_1$} circle  (2pt);
    \filldraw (9,5) node[right] {$t_2$} circle  (2pt);
    \filldraw (8,6) node[right] {$t_3$} circle  (2pt);
    \filldraw (7,7) node[right] {$t_Q$} circle  (2pt);
    \filldraw (3,-1) node[left] {$s_P$} circle  (2pt);
    \filldraw (2,0) node[left] {$s_1$} circle  (2pt);
    \filldraw (1,1) node[left] {$s_2$} circle  (2pt);
    \filldraw (0,2) node[left] {$s_Q$} circle  (2pt);
    \draw[thick](3,-1)--(6,-1)--(6,0)--(8,0)--(8,1)--(9,1)--(9,2)--(11,2)--(11,3);
    \node at (9,0) {$P$};
    \draw[thick] (0,2)--(3,2)--(3,3)--(5,3)--(5,5)--(6,5)--(6,7)--(7,7);
    \node at (4,4) {$Q$};
    \draw[ultra thick,black!20]
    (5,5)--(5,4)--(6,4)--(6,3)--(7,3)--(7,2)--(8,2)--(8,1)--(9,1);
    \draw[very thick,dotted]
    (5,5)--(5,4)--(6,4)--(6,3)--(7,3)--(7,2)--(8,2)--(8,1)--(9,1);
  \end{tikzpicture}
\hspace{10pt}
  \begin{tikzpicture}[scale=0.4]
    \draw  (3,-1)  grid  (6,0);
    \draw  (2,0)  grid  (8, 1);
    \draw  (1,1)  grid  (9, 2);
    \draw  (3,2)  grid  (11, 3);
    \draw  (5,3)  grid  (10, 4);
    \draw  (5,4)  grid  (9, 5);
    \draw  (6,5)  grid  (8, 6);
    \draw  (6,6)  grid  (7, 7);
    \filldraw (11,3) node[right] {$t_P$} circle  (2pt);
    \filldraw (10,4) node[right] {$t_1$} circle  (2pt);
    \filldraw (9,5) node[right] {$t_2$} circle  (2pt);
    \filldraw (8,6) node[right] {$t_3$} circle  (2pt);
    \filldraw (7,7) node[right] {$t_Q$} circle  (2pt);
    \filldraw (3,-1) node[left] {$s_P$} circle  (2pt);
    \filldraw (2,0) node[left] {$s_1$} circle  (2pt);
    \filldraw (1,1) node[left] {$s_2$} circle  (2pt);
    \filldraw (0,2) node[left] {$s_Q$} circle  (2pt);
    \draw[thick](3,-1)--(6,-1)--(6,0)--(8,0)--(8,1)--(9,1)--(9,2)--(11,2)--(11,3);
    \node at (9,0) {$P$};
    \draw[thick] (0,2)--(3,2)--(3,3)--(5,3)--(5,5)--(6,5)--(6,7)--(7,7);
    \node at (4,4) {$Q$};
    \draw[ultra thick,black!20]
    (5,5)--(5,4)--(6,4)--(6,3)--(7,3)--(7,2)--(8,2)--(8,1)--(9,1);
   \draw[very thick,dotted]
    (5,5)--(5,4)--(6,4)--(6,3)--(7,3)--(7,2)--(8,2)--(8,1)--(9,1);
    \draw[ultra thick,white](5,5)--(5,4);
    \draw[ultra thick,white](6,4)--(6,3);
    \draw[ultra thick,white](7,3)--(7,2);
    \draw[ultra thick,white](8,2)--(8,1);
    \draw[ultra thick,white](5,5)--(6,5);
   \fill[pattern = north west lines] (4.9,4.75) rectangle (6,5.25);
    \draw[ultra thick](5,5)--(6,5);
  \end{tikzpicture}
  \caption{The dotted line shows the steps that are labelled $e$.}
  \label{fig:lpmminor}
\end{figure}
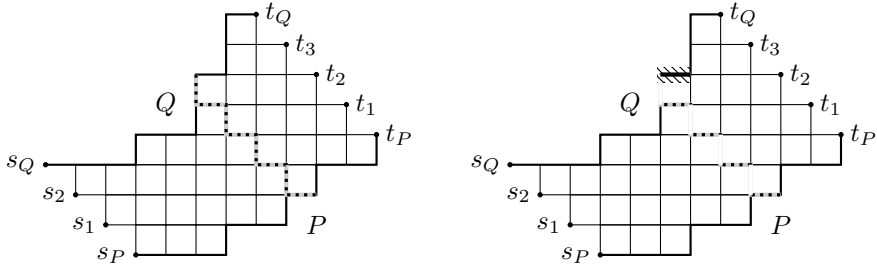

\begin{proof}
  Dual-closure is seen by flipping the diagram around the line $y=x$.
  For minors, first note that a loop in a lattice path delta-matroid
  is represented by an east step that is in both bounding paths (thus
  pinching the paths together for at least that step and giving a
  direct sum decomposition).  The deletion and contraction of a loop
  is obtained by eliminating this step and moving the right side of
  the diagram one unit to the left, as Figure~\ref{fig:lpmminorloop}
  illustrates.  The identification and treatment of coloops follows by
  duality.  Now assume that $e$ is neither a loop nor a coloop, so $e$
  is represented by both north and east steps, indeed, by all of the
  north and east steps that are at distance $e$ from the initial
  steps, as Figure~\ref{fig:lpmminor} shows.  To delete $e$, we must
  use only such steps that go east, so erase those that go north, as
  the second part of Figure~\ref{fig:lpmminor} shows.  As shown there
  (highlighted with hatch lines), some steps may no longer be reached;
  erase them.  Now shrink the east steps labelled $e$ to points to
  obtain a lattice path representation of the deletion of $e$.
  Contractions are handled dually.
\end{proof}

The matroid $M=(E,\mathcal{F})$ on $E=\{1,2,3,4\}$ in which
$\mathcal{F}$ consists of all two-element subsets of $E$ except
$\{3,4\}$ is a lattice path matroid.  The two-element feasible sets of
the partial dual $M*\{2,3\}$ are $\{1,2\}$, $\{1,3\}$, and $\{3,4\}$,
which are not the bases of a matroid.  Thus, neither the class of
lattice path delta-matroids nor the class of Higgs lift delta-matroids
is closed under partial duals.

With Proposition~\ref{new31}, we can strengthen
Proposition~\ref{prop:genlpquot} in the following way.
\begin{cor}
  With the notation above, let $j=r(M(\mathcal{R}_{\min}))$ and
  $k=r(M(\mathcal{R}_{\max}))$.  Fix a subset $K$ of
  $\{j,j+1,\ldots,k\}$ for which $\{j,j+1,\ldots,k\}-K$ contains no
  pair of consecutive integers.  Then
  $\{b(P) \,:\,P\in\mathcal{P} \text{ and } |b(P)|\in K\}$ is the set
  of feasible sets of a delta-matroid.
\end{cor}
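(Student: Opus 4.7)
The plan is to combine Proposition~\ref{new31} with Proposition~\ref{prop:genlpquot} via a reindexing argument. Set $Q = M(\mathcal{R}_{\min})$ and $L = M(\mathcal{R}_{\max})$. By Proposition~\ref{prop:genlpquot}(1), $Q$ is a quotient of $L$, and the difference of ranks is $r(L) - r(Q) = k - j$. Let $K' = \{i - j : i \in K\}$, so $K' \subseteq \{0, 1, \ldots, k-j\}$. The hypothesis that $\{j, j+1, \ldots, k\} - K$ contains no pair of consecutive integers is equivalent to $\{0, 1, \ldots, k-j\} - K'$ containing no pair of consecutive integers. Applying Proposition~\ref{new31} to $Q$, $L$, and $K'$, we conclude that
\[
\mathcal{F}_0 := \bigcup_{i \in K'}\mathcal{B}(H^i_{Q,L})
\]
is the set of feasible sets of a delta-matroid on $E$.

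It remains to identify $\mathcal{F}_0$ with $\{b(P) : P \in \mathcal{P}, |b(P)| \in K\}$. Each basis of $H^i_{Q,L}$ has cardinality $r(Q) + i = j + i$, so bases of different Higgs lifts in the union have different sizes, and the sets in $\mathcal{F}_0$ of size $j + i$ (for $i \in K'$) are exactly the bases of $H^i_{Q,L}$. By Proposition~\ref{prop:genlpquot}(2), the map $P \mapsto b(P)$ sends $\mathcal{P}$ surjectively onto the feasible sets of the full Higgs lift delta-matroid of $(Q, L)$, that is, onto $\bigcup_{i=0}^{k-j} \mathcal{B}(H^i_{Q,L})$. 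Restricting the image to paths $P$ with $|b(P)| \in K$ picks out exactly those sets whose size is $j + i$ for some $i \in K'$, which is $\mathcal{F}_0$. Hence $\{b(P) : P \in \mathcal{P}, |b(P)| \in K\} = \mathcal{F}_0$, which is the set of feasible sets of a delta-matroid.

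The proof is essentially a bookkeeping exercise that glues the two earlier propositions together, so there is no real obstacle: the only thing to be careful about is the index shift between the absolute cardinalities used in the statement (indices ranging over $\{j, \ldots, k\}$) and the relative Higgs lift indices used in Proposition~\ref{new31} (ranging over $\{0, \ldots, k-j\}$), together with the observation that feasible sets of the full Higgs lift delta-matroid are stratified by cardinality into the bases of the individual Higgs lifts.
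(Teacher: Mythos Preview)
Your proof is correct and follows exactly the approach the paper indicates: the corollary is stated without proof, merely prefaced by ``With Proposition~\ref{new31}, we can strengthen Proposition~\ref{prop:genlpquot} in the following way,'' and your argument supplies precisely the reindexing between absolute cardinalities and Higgs-lift indices needed to glue those two propositions together.
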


We note that while $M(\mathcal{R}_{\min})$ and $M(\mathcal{R}_{\max})$
are lattice path matroids, the other Higgs lifts of
$M(\mathcal{R}_{\min})$ toward $M(\mathcal{R}_{\max})$ might not be;
they are in the larger class of multi-path matroids~\cite{omer}.

\section{The excluded-minor characterization of
  delta-matroids}\label{sec:exmin}

Delta-matroids form a minor-closed class of set systems. In this
section, we determine the excluded minors that characterize this
minor-closed class. We also prove Theorem~\ref{extraexhiggs}.

The following set systems play many roles in the rest of this
paper. Let
\[S_i=(\{e_1,e_2,\dots ,e_i\},\{\emptyset ,\{e_1,e_2,\dots ,e_i\}\}).\]
Let $\mathcal{S}$ be the set of all twists of the set systems in
$\{S_3,S_4,\dots \}$.  Let
\begin{itemize}
\item $T_1=(\{a,b,c\},\{\emptyset ,\{a,b\},\{a,b,c\}\})$;
\item $T_2=(\{a,b,c\},\{\emptyset ,\{a,b\},\{a,c\},\{a,b,c\}\})$;
\item $T_3=(\{a,b,c\},\{\emptyset ,\{a\},\{a,b\},\{a,b,c\}\})$;
\item $T_4=(\{a,b,c\},\{\emptyset,\{a\},\{a,b\},\{a,c\},\{a,b,c\}\})$;
\item $T_5=(\{a,b,c,d\},\{\emptyset ,\{a,b\},\{a,b,c,d\}\})$;
\item $T_6=(\{a,b,c,d\},\{\emptyset ,\{a,b\},\{a,c\},\{a,b,c,d\}\})$;
\item $T_7=(\{a,b,c,d\},\{\emptyset,\{a,b\},\{a,c\},\{a,d\},
  \{a,b,c,d\}\})$;
\item $T_8=(\{a,b,c,d\},\{\emptyset,\{a\},\{a,b\},\{a,c\},
  \{a,d\},\{a,b,c,d\}\})$.
\end{itemize}
Let $\mathcal{T}$ be the set of all twists of the set systems in
$\{T_1,T_2, \dots ,T_8\}$.
It is easy to check that none of the set systems just defined is a
delta-matroid, so none of the set systems in
$\mathcal{S}\cup \mathcal{T}$ is a delta-matroid.

We first prove Theorem~\ref{extraexhiggs}.  For that, it is useful to
note that, up to isomorphism, there is only one four-element even set
system $S=(E,\mathcal{F})$ such that $\emptyset ,E\in\mathcal{F}$ and
$S$ is not among $U_3 , U_4 , \ldots , U_7, T_5, T_6, T_7, T_7^*$, and
$S_4$.  Its feasible sets are all sets of even cardinality, that is,
$S$ is the even Higgs lift delta-matroid of the pair
$((E,\{\emptyset\}),(E,\{E\}))$.  Note that this proof does not use
the fact that the class of Higgs lift delta-matroids is minor-closed;
that is, instead, a corollary of the proof.

\setcounter{section}{3}
\setcounter{thm}{4}

\begin{proof}[Proof of Theorem~\ref{extraexhiggs}.]
  Suppose first that a delta-matroid $D=(E,\mathcal{F})$ has a minor
  $D'$ that is isomorphic to one of $U_1,U_2,\dots ,U_7$.  Using
  Lemma~\ref{minorisminor} and relabeling, we may assume that
  $D'=U_i=D\ba X/Y$, and its collection of feasible sets is
  \[\{F-Y:F\in\mathcal{F}\text{ and }Y\subseteq F\subseteq E-X\}.\]
  Since $\emptyset$ and $E(U_i)$ are in $\mathcal{F}(U_i)$, the sets
  $Y$ and $E-X$ are in $\mathcal{F}$, so by
  Lemma~\ref{lem:containment} there are sets
  $A\in\mathcal{B}(D_{\min})$ and $B\in\mathcal{B}(D_{\max})$ with
  $A\subseteq Y$ and $E-X\subseteq B$.  The delta-matroid $U_i$ has
  sets $C$ and $C'$ where $|C|=|C'|$, yet only one of $C$ and $C'$ is
  feasible.  It follows that $A\subseteq Y\cup C\subseteq B$ and
  $A\subseteq Y\cup C'\subseteq B$, yet only one of $Y\cup C$ and
  $Y\cup C'$ is feasible in $D$, so $D$ is not a Higgs lift
  delta-matroid by Lemma~\ref{higgsfeas}, as we needed to prove.

  For the remainder of the proof, we will assume that a delta-matroid
  $D=(E,\mathcal{F})$ is not a Higgs lift delta-matroid, and, toward
  deriving a contradiction, that $D$ does not contain any minor
  isomorphic to a member of $\{U_1,U_2,\dots ,U_7\}$.  From
  Corollary~\ref{cor32}, we know that $\mathcal{F}$ is a subset of the
  full Higgs lift delta-matroid of the pair $(D_{\min},D_{\max})$.
  Evidently $D$ is missing some sets whose addition would give a Higgs
  lift delta-matroid.

  For all non-negative integers $i\leq r(D_{\max})-r(D_{\min})$, let
  $N_i$ be the set system
  \[N_i=(E,\{F\in\mathcal{F} : |F|=i+r(D_{\min})\})\] and let
  $H^i=H^i_{D_{\min},D_{\max}}$.  Since $D$ is not a Higgs lift
  delta-matroid, there is some proper set system $N_k$ such that
  $N_k\neq H^k$.  Let $k$ be least with this property.  Thus,
  $0<k<r(D_{\max})-r(D_{\min})$.  From any basis of the matroid $H^k$
  we can obtain any other basis of $H^k$ by a sequence of
  single-element exchanges. Also, all feasible sets in $N_k$ are bases
  of $H^k$ but not conversely.  It follows that there are sets
  $Y\in \mathcal{F}(N_k)$ and $X\in \mathcal{B}(H^k)-\mathcal{F}(N_k)$
  with $|X\triangle Y|=2$.  Let $X\btu Y=\{x,y\}$, where $X=A\cup x$
  and $Y=A\cup y$.  Since $X,Y\in\mathcal{B}(H^k)$,
  Lemma~\ref{higgsfeas} implies that both are spanning in $D_{\min}$
  and independent in $D_{\max}$.  Furthermore, there exist sets
  $F_x\in\mathcal{B}(D_{\min})$ and $G_x\in\mathcal{B}(D_{\max})$ such
  that $F_x\subseteq X\subseteq G_x$.

  We show that
  \begin{sublemma}\label{top}
    \begin{enumerate}
    \item $A\cup\{x,y,z\}\in\mathcal{F}$ for some element $z\in E-A$,
      where $z$ may be $x$; and
    \item $A\in\mathcal{F}$ or $A-a\in\mathcal{F}$ for some element
      $a\in A$.
    \end{enumerate}
  \end{sublemma}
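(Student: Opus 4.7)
The plan is to apply the symmetric exchange axiom (SE) twice, each time pairing the feasible set $Y$ with an extremal feasible set that sandwiches $X$. Since $X \in \mathcal{B}(H^k)$, the description of the bases of a Higgs lift says that $X$ spans $D_{\min}$ and is independent in $D_{\max}$. Consequently there exist a basis $B \in \mathcal{B}(D_{\max})$ with $X\subseteq B$ and a basis $F \in \mathcal{B}(D_{\min})$ with $F \subseteq X$, and both $B$ and $F$ are feasible in $D$ (as bases of $D_{\max}$ and $D_{\min}$, respectively).

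For part (1), I would apply (SE) to the feasible pair $(Y,B)$ with the chosen element $u = x$, noting that $x \in B - Y \subseteq Y \triangle B$ because $X = A\cup x \subseteq B$ and $x\notin Y$. This gives some $v \in Y\triangle B$ with $Y\triangle\{x,v\} \in \mathcal{F}$. Since $A \cup x \subseteq B$, we have $Y - B \subseteq \{y\}$, so $v$ lies in $(B-Y)\cup\{y\}$. The choice $v = y$ would produce $Y\triangle\{x,y\} = A\cup x = X$, which is forbidden because $X \notin \mathcal{F}$. Thus either $v = x$, giving $Y\triangle\{x\} = A\cup\{x,y\} \in \mathcal{F}$ (and we take $z = x$), or $v \in B - Y$ with $v\ne x$, giving $A\cup\{x,y,v\} \in \mathcal{F}$ (and we take $z = v$). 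Either way (1) holds.

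For part (2), the argument is dual: apply (SE) to $(Y,F)$ with $u = y$, observing that $y\in Y - F \subseteq Y\triangle F$ because $F \subseteq A\cup x$ and $y\notin A\cup x$. One obtains $v\in Y\triangle F$ with $Y\triangle\{y,v\} \in \mathcal{F}$. Since $F\subseteq A\cup x$, we have $F - Y \subseteq \{x\}$, and $v = x$ would again yield $X \in \mathcal{F}$, a contradiction. So $v$ lies in $Y - F \subseteq (A\cup y) - F$: the case $v = y$ gives $Y\triangle\{y\} = A \in \mathcal{F}$, while $v \in A - F$ gives $A - v \in \mathcal{F}$ with $v\in A$.

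There isn't really a hard step here; the sublemma is essentially a structural unpacking of (SE) in the presence of a missing set $X$ sandwiched between the known feasible sets $F$ and $B$. The one thing to be careful about is the bookkeeping of where $v$ can live in each symmetric difference, so that the forbidden exchange ($v=y$ in the first application, $v=x$ in the second) can be isolated and used to force the remaining options, which are precisely conclusions (1) and (2).
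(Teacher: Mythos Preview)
Your proof is correct and follows the same approach as the paper: apply (SE) to $(Y,G_x,x)$ for part~(1) and to $(Y,F_x,y)$ for part~(2), then rule out the option $v$ that would yield $X\in\mathcal{F}$. Your write-up simply spells out the bookkeeping on $Y\triangle B$ and $Y\triangle F$ more explicitly than the paper does.
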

  By applying Axiom~(SE) to $(A\cup y,G_x,x)$, we find that
  $A\cup \{x,y\},A\cup x$, or $A\cup\{x,y,z\}$ is in $\mathcal{F}$ for
  some element $z\in E-A$.  Since $A\cup x\notin\mathcal{F}$, part (1)
  follows.  By applying Axiom~(SE) to $(A\cup y,F_x,y)$, we find that
  $A, A\cup x$, or $A-a$ is in $\mathcal{F}$, for some element
  $a\in A$.  Since $A\cup x\notin\mathcal{F}$, part (2) follows.

  Next, we show that
  \begin{sublemma}\label{notA}
    $A\notin\mathcal{F}$.
  \end{sublemma}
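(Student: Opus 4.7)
The plan is to derive a contradiction from assuming $A\in\mathcal{F}$ by exhibiting a minor of $D$ isomorphic to $U_1$ or $U_2$, violating the standing hypothesis. Sublemma~\ref{top}(1) provides some $z\in E-A$ with $A\cup\{x,y,z\}\in\mathcal{F}$, where the case $z=x$ is permitted and collapses $A\cup\{x,y,z\}$ to $A\cup\{x,y\}$. Since $A$ is assumed feasible, Lemma~\ref{minorisminor} makes the minors $D\ba(E-A-\{x,y\})/A$ and $D\ba(E-A-\{x,y,z\})/A$ well-defined, with $A$ itself witnessing the containment hypothesis.

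Suppose first that $A\cup\{x,y\}\in\mathcal{F}$ (which covers the case $z=x$). I would look at the minor on the ground set $\{x,y\}$ given by equation~(\ref{eq:minor}), whose feasible sets are $F-A$ for those $F\in\mathcal{F}$ with $A\subseteq F\subseteq A\cup\{x,y\}$. Since $A$, $A\cup y$, and $A\cup\{x,y\}$ are feasible but $A\cup x$ is not, the minor is $(\{x,y\},\{\emptyset,\{y\},\{x,y\}\})$, a copy of $U_1$, contradicting the standing hypothesis.

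Otherwise $A\cup\{x,y\}\notin\mathcal{F}$ and $z\in E-A-\{x,y\}$. I would turn to $D'=D\ba(E-A-\{x,y,z\})/A$ on ground set $\{x,y,z\}$. The same computation gives $\emptyset,\{y\},\{x,y,z\}\in\mathcal{F}(D')$ and $\{x\},\{x,y\}\notin\mathcal{F}(D')$. The key ingredient is that $D'$ is itself a delta-matroid, so applying (SE) to the triple $(\emptyset,\{x,y,z\},x)$ forces $\{x,z\}\in\mathcal{F}(D')$, because the only other candidates $\{x\}$ and $\{x,y\}$ for $\emptyset\triangle\{x,v\}$ are infeasible. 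This leaves four sub-cases according to the feasibility in $D'$ of $\{z\}$ and $\{y,z\}$. If neither is feasible, then $\mathcal{F}(D')=\{\emptyset,\{y\},\{x,z\},\{x,y,z\}\}$ and $D'$ is isomorphic to $U_2$. If $\{z\}\in\mathcal{F}(D')$, then $D'\ba y$ has feasible sets $\emptyset,\{z\},\{x,z\}$, a copy of $U_1$. Finally, if $\{y,z\}\in\mathcal{F}(D')$ but $\{z\}\notin\mathcal{F}(D')$, then $D'\ba x$ has feasible sets $\emptyset,\{y\},\{y,z\}$, again a copy of $U_1$. Every outcome exhibits a forbidden minor, completing the contradiction.

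The argument is essentially elementary bookkeeping once the candidate minor $D'$ is in hand, since the whole situation collapses to a ground set of at most three elements. The one non-routine step, and hence the main obstacle, is the use of (SE) inside $D'$ to force $\{x,z\}\in\mathcal{F}(D')$; without this we could not rule out sub-cases in which the remaining feasibility data fails to isolate a $U_1$ or $U_2$ minor.
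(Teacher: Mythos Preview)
Your proof is correct and follows essentially the same route as the paper: both pass to the three-element minor $D'=(D/A)|\{x,y,z\}$, use (SE) on $(\emptyset,\{x,y,z\},x)$ to force $\{x,z\}\in\mathcal{F}(D')$, and then locate a $U_1$ or $U_2$ minor. The only cosmetic difference is that the paper sequentially eliminates $\{y,z\}$ (via $D'/y$) and then $\{z\}\notin\mathcal{F}'$ (via $U_2$) before reaching $D'\ba y\cong U_1$, whereas you handle all four possibilities for $(\{z\},\{y,z\})$ in one explicit case split.
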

  Suppose $A\in\mathcal{F}$.  Since $A\cup x\notin\mathcal{F}$ and
  $A\cup y\in\mathcal{F}$, and $(D/A)|\{x,y\}$ is not isomorphic to
  $U_1$, we know that $A\cup\{x,y\}\notin\mathcal{F}$.
  By~\ref{top}(1), $A\cup\{x,y,z\}\in\mathcal{F}$ for some
  $z\in E-(A\cup \{x,y\})$.  Let
  $D'=(E',\mathcal{F}')=(D/A)|\{x,y,z\}$.  Then $\mathcal{F}'$
  contains $\emptyset,\{y\}$, and $\{x,y,z\}$, and avoids $\{x\}$ and
  $\{x,y\}$.  Since $D'/y$ is not isomorphic to $U_1$,
  $\{y,z\}\notin\mathcal{F}'$.  By Axiom~(SE) applied to
  $(\emptyset,\{x,y,z\},x)$, we find that $\{x\},\{x,y\}$, or
  $\{x,z\}$ is in $\mathcal{F}'$.  Hence $\{x,z\}\in\mathcal{F}'$.
  Since we avoid a $U_2$-minor, it must be the case that the last
  possible feasible set, $\{z\}$, is in $\mathcal{F}'$.  Now $D'\ba y$
  is isomorphic to $U_1$, a \cn.  Thus~\ref{notA} holds.

  By~\ref{top}(2) and~\ref{notA}, we know that $A\notin\mathcal{F}$
  and $A-a\in\mathcal{F}$ for some element $a\in A$. The minimality of
  $k$ and having $A\in \mathcal{B}(H^{k-1})-\mathcal{F}(N_{k-1})$
  imply that $N_{k-1}$ is not proper.  Hence no set in $\mathcal{F}$
  has cardinality $|A|$.  If $A\cup\{x,y\}\in\mathcal{F}$, then
  Axiom~(SE) applied to $(A\cup\{x,y\},A-a,y)$ implies that some set
  in $\{A\cup x, A,(A\cup x)-a\}$ is in $\mathcal{F}$.  The
  cardinality of the last two sets is equal to $|A|$, so neither of
  these is in $\mathcal{F}$, and the first also does not occur.  Hence
  $A\cup\{x,y\}\notin\mathcal{F}$.  By~\ref{top}(1),
  $A\cup\{x,y,z\}\in\mathcal{F}$ for some $z\in E-(A\cup\{x,y\})$.

  Let $D'=(E',\mathcal{F}')= (D/(A-a))|\{a,x,y,z\}$.  We know that
  $\mathcal{F}'$ contains $\emptyset,\{a,y\}$, and $\{a,x,y,z\}$, and
  avoids $\{a,x\}$ and $\{a,x,y\}$.  Furthermore $\mathcal{F}'$
  contains no single-element sets since $\mathcal{F}$ contains no sets
  of cardinality $|A|$.  As $D'/\{a,y\}$ is not isomorphic to $U_1$,
  $\{a,y,z\}\notin\mathcal{F}'$.  If $\{a,x,z\}\in\mathcal{F}'$, then
  Axiom~(SE) applied to $(\{a,x,z\},\emptyset ,z)$ implies that a set
  in $\{\{a,x\},\{a\},\{x\}\}$ is in $\mathcal{F}'$, a \cn.  If $D'$
  is even, then it is straightforward to check that it is isomorphic
  to a set system in $\{U_3,U_4,\dots,U_7,T_5,T_6,T_7,T_7^*\}$, a \cn.
  We have ruled out all singleton sets and all three-element sets from
  being in $\mathcal{F}'$ except possibly $\{x,y,z\}$.  Hence
  $\{x,y,z\}\in\mathcal{F}'$.  Now Axiom~(SE) applied to
  $(\{x,y,z\},\emptyset,z)$ implies that some set in
  $\{\{x,y\},\{x\},\{y\}\}$ is in $\mathcal{F}'$.  Hence
  $\{x,y\}\in\mathcal{F}'$ and $D'/\{x,y\}$ is isomorphic to $U_1$, a
  \cn.
\end{proof}

\setcounter{section}{5}
\setcounter{thm}{0}

Next we prove the following excluded-minor characterization of
delta-matroids.

\begin{thm}\label{exdelta}
  A proper set system $S$ is a delta-matroid if and only if $S$ has no
  minor isomorphic to a set system in $\mathcal{S}\cup\mathcal{T}$.
\end{thm}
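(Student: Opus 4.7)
The forward direction rests on two observations. First, a direct check of axiom (SE) confirms that none of the set systems $S_i$ (for $i\geq 3$) or $T_1,\ldots,T_8$ is a delta-matroid; for example, the triple $(E,\emptyset,a)$ witnesses failure of (SE) in $T_2$. Second, axiom (SE) is preserved by twists, because a valid exchange element $v$ for the triple $(X,Y,u)$ in $S$ is also a valid exchange for $(X\triangle A, Y\triangle A, u)$ in $S*A$. Combining these two facts with the minor-closure of the delta-matroid class (recalled in Section~2), any proper set system having a minor in $\mathcal{S}\cup\mathcal{T}$ fails to be a delta-matroid.

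For the converse, I take $S$ to be a proper set system that is not a delta-matroid and is minor-minimal with this property; the task is to show that $S$ itself is isomorphic to a member of $\mathcal{S}\cup\mathcal{T}$. Choose a failure triple $(X,Y,u)$ with $|X\triangle Y|$ as small as possible. Then $E(S)=X\triangle Y$: if some $e\in E-(X\cup Y)$ existed, then $S\setminus e$ would be a proper minor (as $e$ is not a coloop), yet $(X,Y,u)$ would still witness failure in $S\setminus e$, because none of $X$, $Y$, or any $X\triangle\{u,v\}$ meets $e$; similarly, any $e\in X\cap Y$ would give a failure at $(X-e,Y-e,u)$ in the proper minor $S/e$, in each case contradicting minor-minimality. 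Since $\mathcal{S}\cup\mathcal{T}$ is by definition closed under twists, and the class of delta-matroids is twist-closed as well, minor-minimality is preserved upon replacing $S$ by $S*X$, and the failure triple becomes $(\emptyset,E,u)$. We may thus assume $\emptyset,E\in\mathcal{F}(S)$ and $\{u,v\}\notin\mathcal{F}(S)$ for every $v\in E$ (in particular $\{u\}\notin\mathcal{F}(S)$, from $v=u$).

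It remains to case-analyze $\mathcal{F}(S)$. If $\mathcal{F}(S)=\{\emptyset,E\}$, then $S\cong S_{|E|}$, and the failure of (SE) forces $|E|\geq 3$, giving $S\in\mathcal{S}$. Otherwise $\mathcal{F}(S)$ contains some $F\notin\{\emptyset,E\}$, and I branch on whether any singleton is feasible (which must be some $\{e\}$ with $e\neq u$), on the number and overlap pattern of feasible $2$-sets (each of which must avoid $u$), and on whether $|E|=3$ or $|E|\geq 4$. In each surviving configuration, either $S$ is already isomorphic to a twist of some $T_j$, or a proper minor of $S$ is, which would contradict minor-minimality. I expect the bulk of the work, and the main obstacle, to be exactly this final enumeration; the style of the argument should parallel the proof of Theorem~\ref{extraexhiggs}, where minor-minimality likewise forces only a handful of small configurations to survive.
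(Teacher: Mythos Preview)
Your setup and reduction coincide with the paper's: choose a failure triple $(X,Y,u)$ with $|X\triangle Y|$ minimal, use minor-minimality to force $E=X\triangle Y$, and twist so that $X=\emptyset$, $Y=E$ (the paper phrases the last two steps in the opposite order, but the effect is identical). Your claim that twisting preserves minor-minimality is correct, since each single-element minor of $S*A$ is a twist of a single-element minor of $S$.

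The one mechanism your sketch does not yet isolate---and it is what makes the enumeration finite---is that minimality of $|X\triangle Y|=|E|$ forces (SE) to \emph{hold} for every pair of feasible sets whose symmetric difference is strictly smaller than $E$. The paper exploits this repeatedly: if some $E-x$ is feasible, apply (SE) to $(E-x,\emptyset,u)$ to produce a feasible $F=E-\{u,x,y\}$; since $(E,F,u)$ still fails (SE), minimality gives $|E|\leq 3$, and one reads off $T_1,\ldots,T_4$. If no $E-x$ is feasible but some $E-\{x,y\}$ is, the same trick with $(E-\{x,y\},\emptyset,u)$ yields $|E|\leq 4$, and a short check gives $T_1^*$ or one of $T_5,\ldots,T_8$ (or a contradiction via a proper minor). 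If no set of size $|E|-1$ or $|E|-2$ is feasible, then $\mathcal{F}=\{\emptyset,E\}$ and $S\cong S_{|E|}$. Your plan to ``branch on whether $|E|=3$ or $|E|\geq 4$'' will not terminate without this bounding step, so make it explicit.
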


Recall from Corollary~\ref{cor32} that any delta-matroid may be
obtained from a full Higgs lift delta-matroid by removing some
feasible sets.  Theorem~\ref{exdelta} identifies those intervals that
we must not create when removing feasible sets from Higgs lift
delta-matroids in order to get general delta-matroids.  We note that
$\mathcal{T}$ contains $51$ set systems, which are all shown in
Tables~\ref{t1}--\ref{t8} in the appendix, Section~\ref{sec:app}.  We
will exploit Theorem~\ref{exdelta} and these tables in
Section~\ref{sec:msdm}, where we consider delta-matroids that are
built from matroids.

\begin{proof}[Proof of Theorem~\ref{exdelta}.]
  Every minor of a delta-matroid is a delta-matroid.  Therefore no
  delta-matroid has any minor in $\mathcal{S}\cup\mathcal{T}$.  It is
  easy to check that each single-element deletion and single-element
  contraction of a set system in $\mathcal{S}\cup\mathcal{T}$ is a
  delta-matroid.
  
  Suppose that a proper set system $S=(E,\mathcal{F})$ is an excluded
  minor for the class of delta-matroids.  Then it is not a
  delta-matroid but every minor of $S$, other than $S$ itself, is a
  delta-matroid.  Take sets $A$ and $B$ in $\mathcal{F}$ and element
  $a$ in $A\btu B$ such that $A\btu \{a,x\}$ is not in $\mathcal{F}$
  for all $x\in A\btu B$.  We assume that $|A\btu B|$ is minimized
  fitting this condition.  Up to taking partial duals of $S$, we may
  assume that $B\subset A$.  By deleting the elements in $E-A$ and
  contracting the elements in $B$, we get a minor of $S$ that also
  fails to be a delta-matroid, since Axiom~(SE) fails for the triple
  $(A-B,\emptyset ,a)$.  Thus, we can take $E=A$ and $B=\emptyset$.
  Then $a\in A$, and $A-\{a,x\}\notin \mathcal{F}$ for all $x\in A$.
  Thus $|A|\geq 3$.

  Suppose $A-x$ is in $\mathcal{F}$ for some element $x\in A$.
  Clearly $x\neq a$.  By minimality of $|A\btu B|$, Axiom~(SE) applied
  to the triple $(A-x,\emptyset ,a)$ implies that
  $(A-x)\btu\{a,y\}\in\mathcal{F}$ for some element $y\in A-x$.  As
  $A-\{a,x\}$ is not in $\mathcal{F}$, we know that $y\notin \{x,a\}$,
  and $A-\{a,x,y\}$ is in $\mathcal{F}$ and has three elements fewer
  than $A$.  Furthermore, Axiom~(SE) fails for $(A,A-\{a,x,y\},a)$.
  By the minimality of $A\btu B$, we deduce that $B=A-\{a,x,y\}$, so
  $|A|=3$.  Without loss of generality, $A=\{a,b,c\}$ and $x=c$, so
  $\{a,b\}\in \mathcal{F}$.  Then $\mathcal{F}$ contains
  $A,\{a,b\},\emptyset$, and some sets in $\{\{a\},\{a,c\}\}$.  It
  follows that $S$ is one of $T_1$, $T_2$, $T_3$, or $T_4$.

  We assume then that for all $x\in A$, the set $A-x$ is not in
  $\mathcal{F}$.  Suppose that $A-\{x,y\}$ is in $\mathcal{F}$ for
  some $x,y\in A$.  Clearly $x\neq y$ and $a\notin \{x,y\}$.  Then by
  minimality of $|A\btu B|$, Axiom~(SE) applied to
  $(A-\{x,y\},\emptyset ,a)$ implies that there is an element
  $z\in A-\{x,y\}$ such that $(A-\{x,y\})-\{a,z\}$ is in
  $\mathcal{F}$.  Now Axiom~(SE) does not hold for the triple
  $(A,A-\{a,x,y,z\},a)$, since, for any element $e$ in $\{a,x,y,z\}$,
  the set $A-\{a,e\}$ is not in $\mathcal{F}$.  Thus $|A|\leq 4$.  If
  $|A|<4$, then $|A|=3$, and it is straightforward to check that $S$
  is isomorphic to $T_1^*$. We assume therefore that $|A|=4$, and
  $A=\{a,b,c,d\}$.  Without loss of generality, $\{x,y\}=\{c,d\}$, so
  $\{a,b\}\in\mathcal{F}$.  Now $\mathcal{F}$ does not contain any
  three-element sets, nor does it contain $\{b,c\},\{b,d\}$, or
  $\{c,d\}$.  By the minimality of $|A\btu B|$, Axiom~(SE) holds for
  each triple containing two sets in $\mathcal{F}$ and an element in
  their symmetric difference unless the two sets are $A$ and $B$.  If
  $\{w\}\in\mathcal{F}$ for some $w\in\{b,c,d\}$, then there is an
  element $v\in\{a,b,c,d\}\btu\{w\}$ such that
  $\{a,b,c,d\}\btu\{a,v\}$ is in $\mathcal{F}$.  As no such set is in
  $\mathcal{F}$, we know that $\{a\}$ is the only possible singleton
  set in $\mathcal{F}$.  Therefore, $\mathcal{F}$ contains
  $A, \{a,b\}, \emptyset$, and some sets in
  $\{\{a,c\},\{a,d\},\{a\}\}$.  It is straightforward to check that
  either $S$ is isomorphic to one of $T_5,T_6,T_7$, or $T_8$, or $S/a$
  is isomorphic to $T_1^*$ or $T_2^*$.

  We may now assume that $A-x$ and $A-\{x,y\}$ are not in
  $\mathcal{F}$ for all $x,y\in A$.  Let $A'$ be a second largest set
  in $\mathcal{F}$.  Then Axiom~(SE) fails for the triple $(A,A',e)$,
  for any $e\in A-A'$.  Hence $|A'|=|B|=0$, by minimality of
  $|A\btu B|$.  Let $|A|=k$.  Clearly $k\geq 3$.  Then $S\cong S_k$.
\end{proof}

The next two results are easily obtained from Theorem~\ref{exdelta}.
Both characterize even delta-matroids.  Since minors of even set
systems are even, an even set system is an even delta-matroid if and
only if it does not have, as a minor, any of the even excluded-minors
for delta-matroids.  Let $\mathcal{T}_{5,6,7}$ be the set of all set
systems that are twists of $T_5$, $T_6$, or $T_7$.

\begin{cor}\label{exevendelta}
  A proper, even set system $S$ is an even delta-matroid if and only
  if $S$ has no minor isomorphic to a set system in
  $\{(E,\mathcal{F})\in \mathcal{S}:|E|\text{ is
    even}\}\cup\mathcal{T}_{5,6,7}$.
\end{cor}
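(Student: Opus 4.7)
The plan is to derive this directly from Theorem~\ref{exdelta} by restricting to even set systems. For an even proper set system $S$, being an even delta-matroid is the same as being a delta-matroid, so Theorem~\ref{exdelta} characterizes this by the absence of a minor isomorphic to any member of $\mathcal{S}\cup\mathcal{T}$. Because every minor of an even set system is even (as noted in Section~2 right after the definition of minors), only the even members of $\mathcal{S}\cup\mathcal{T}$ can actually arise as minors of $S$. So the corollary follows once I identify exactly which members of $\mathcal{S}\cup\mathcal{T}$ are even.

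For that identification, I would first observe that twists preserve evenness: twisting by $A$ sends $F$ to $F\triangle A$, and since $|F_1\triangle A|-|F_2\triangle A|\equiv |F_1|-|F_2|\pmod{2}$, an even set system stays even under any twist. Hence it suffices to test the base representatives $S_i$ (for $i\ge 3$) and $T_1,\ldots,T_8$. The set system $S_i$ has feasible sets of sizes $0$ and $i$, so $S_i$ is even exactly when $i$ is even, equivalently when $|E(S_i)|$ is even; since twists preserve the ground-set size, the even members of $\mathcal{S}$ are precisely those $(E,\mathcal{F})\in\mathcal{S}$ with $|E|$ even. A direct inspection of the listed feasible sets of $T_1,\ldots,T_8$ shows that $T_1,T_2,T_3,T_4,T_8$ each contain both $\emptyset$ and a feasible set of odd size, hence are not even, while $T_5,T_6,T_7$ have all feasible sets of even size. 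So the even members of $\mathcal{T}$ form exactly $\mathcal{T}_{5,6,7}$.

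Combining these two observations with Theorem~\ref{exdelta} yields the claimed excluded-minor characterization. There is no real obstacle here: the argument is a short parity bookkeeping layered on top of Theorem~\ref{exdelta}, with the only nontrivial verification being the one-line fact that twists preserve evenness.
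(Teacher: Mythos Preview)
Your proposal is correct and follows essentially the same approach as the paper: the paper states that since minors of even set systems are even, an even set system is an even delta-matroid if and only if it avoids the even excluded minors for delta-matroids, and then identifies those as the set in the statement. You simply supply the straightforward details (that twists preserve evenness, and which of the $S_i$ and $T_j$ are even) that the paper leaves implicit.
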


The second uses a result of Bouchet~\cite[Lemma~5.4]{ab2}: within the
class of delta-matroids, $S_1$ is the unique excluded minor for even
delta-matroids.  Thus, adjoining $S_1$ to the list of excluded minors
in Theorem~\ref{exdelta} characterizes even-delta matroids.  However,
in order to get the excluded minors, we must discard those other than
$S_1$ that have an $S_1$-minor; those are exactly the set systems in
$\mathcal{T}-\mathcal{T}_{5,6,7}$.

\begin{cor}
\label{exevendelta2}
  A proper set system $S$ is an even delta-matroid if and only
  if $S$ has no minor isomorphic to a set system in
  $\{S_1\}\cup\mathcal{S}\cup\mathcal{T}_{5,6,7}$.
\end{cor}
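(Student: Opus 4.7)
The plan is to combine Theorem~\ref{exdelta} with the Bouchet result quoted just above (within delta-matroids, $S_1$ is the unique excluded minor for evenness). A proper set system is an even delta-matroid exactly when it is a delta-matroid and is even, so by Theorem~\ref{exdelta} and Bouchet's lemma, $S$ is not an even delta-matroid if and only if $S$ has a minor in $\{S_1\}\cup\mathcal{S}\cup\mathcal{T}$: either $S$ is not a delta-matroid, in which case Theorem~\ref{exdelta} supplies a minor in $\mathcal{S}\cup\mathcal{T}$; or $S$ is a delta-matroid but not even, in which case Bouchet's result supplies an $S_1$-minor. It is immediate that each listed set system fails to be an even delta-matroid ($S_1$ is not even, and each element of $\mathcal{S}\cup\mathcal{T}$ is not a delta-matroid).

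Next I identify which elements of the above list are minor-minimal. Both single-element minors of $S_1$ have empty ground set and are vacuously even delta-matroids, so $S_1$ is an excluded minor. Each element of $\mathcal{S}$ has two complementary feasible sets, so each single-element minor retains at most one of them, and so is a vacuously even delta-matroid (its being a delta-matroid follows from Theorem~\ref{exdelta}). Each element of $\mathcal{T}_{5,6,7}$ is even, since the feasibles of $T_5, T_6, T_7$ all have even cardinality and twists preserve parity-differences; hence all its minors are even, and its proper minors are delta-matroids by Theorem~\ref{exdelta}. Thus every member of $\{S_1\}\cup\mathcal{S}\cup\mathcal{T}_{5,6,7}$ is an excluded minor.

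The remaining task—which I expect to be the main obstacle—is to show that each twist of $T_1, T_2, T_3, T_4, T_8$ admits an $S_1$-minor and is therefore not minor-minimal (being dominated by $S_1$ in the minor order). For each untwisted representative $T_i$, the feasibles have cardinalities of both parities, and a single well-chosen contraction yields a proper minor that, by Theorem~\ref{exdelta}, is a delta-matroid and is still not even; Bouchet's result then furnishes an $S_1$-minor. For instance, $T_1 / a = (\{b,c\}, \{\{b\},\{b,c\}\})$ is a delta-matroid with feasibles of different parities, so it has an $S_1$-minor (indeed, a further contraction of $b$ yields $S_1$ directly). Since twists preserve parity-differences, each twist $T_i*A$ is also not even; because minors and twists interact in the standard way and $S_1$ is fixed up to isomorphism by all twists of a one-element set, the same $S_1$-minor argument applies to $T_i*A$. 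The verification for each of the five orbits is routine using the explicit descriptions in the appendix tables.
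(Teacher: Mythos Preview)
Your proof is correct and follows essentially the same approach as the paper: combine Theorem~\ref{exdelta} with Bouchet's lemma to get the preliminary list $\{S_1\}\cup\mathcal{S}\cup\mathcal{T}$, then prune by removing those members of $\mathcal{T}$ that have an $S_1$-minor (the twists of $T_1,T_2,T_3,T_4,T_8$). The paper's justification is a terse one-sentence assertion that those are exactly the set systems in $\mathcal{T}-\mathcal{T}_{5,6,7}$; you supply the details the paper omits, and your observation that $S_1$ is invariant under all twists (so an $S_1$-minor of $T_i$ transports to every twist $T_i*A$) is the right way to avoid checking all $51$ twists individually.
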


A delta-matroid is a matroid exactly when its feasible sets are
equicardinal, so it is straightforward to determine the excluded
minors for matroids from Theorem~\ref{exdelta}.

\begin{cor}\label{exmatroids}
  A proper set system $S=(E,\mathcal{F})$ is a matroid if and only if
  all of the sets in $\mathcal{F}$ have the same size, and $S$ has no
  minor isomorphic to a set system in
  \[\{T_5* \{a,d\},\,\, T_6* \{a,d\}\}\cup \{S_{2k}*\{e_1,e_2,\dots
    ,e_k\}:k\geq 2\}.\]
\end{cor}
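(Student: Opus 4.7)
The plan is to reduce the statement to Theorem~\ref{exdelta} and then filter its list of excluded minors through the equicardinality hypothesis. A proper set system is a matroid if and only if its feasible sets are equicardinal and satisfy axiom (SE), since for equicardinal feasible sets axiom (SE) coincides with the matroid basis exchange axiom (the element $v$ it produces cannot equal $u$ or lie on the same side of $X\triangle Y$ as $u$ without violating equicardinality). Therefore, by Theorem~\ref{exdelta}, an equicardinal proper set system $S$ is a matroid if and only if $S$ has no minor isomorphic to a member of $\mathcal{S}\cup\mathcal{T}$. Thus it suffices to show that, up to isomorphism, the equicardinal members of $\mathcal{S}\cup\mathcal{T}$ are exactly the set systems appearing in the statement.

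Next, I would observe that minors of equicardinal set systems are again equicardinal: by equation~(\ref{eq:minor}), every minor $S\ba X/Y$ has feasible sets of the form $F-Y$ for various $F$ of a common size $r$, so all its feasible sets have size $r-|Y|$. Consequently, any minor of $S$ that happens to lie in $\mathcal{S}\cup\mathcal{T}$ must itself be an equicardinal element of $\mathcal{S}\cup\mathcal{T}$, and the problem is reduced to listing such elements.

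Finally, from the identity $|A\triangle F|=|A|+|F|-2|A\cap F|$ one obtains $|A\triangle F_1|-|A\triangle F_2|\equiv |F_1|-|F_2|\pmod{2}$, so a twist of a set system can be equicardinal only if all feasible sets of that set system have a common parity. Inspecting the generating families, this narrows the candidates to $S_{2k}$ for $k\geq 2$ together with $T_5$, $T_6$, and $T_7$. For each, I would solve the linear constraints on the numbers $|A\cap F|$ that force every twisted feasible set to a common size: for $S_{2k}$ one needs $|A|=k$, giving a unique twist up to isomorphism, namely $S_{2k}*\{e_1,\dots,e_k\}$; for $T_5$ one needs $|A|=2$ and $|A\cap\{a,b\}|=1$, giving the unique twist $T_5*\{a,d\}$ up to isomorphism; for $T_6$ the constraints force $A\in\{\{a,d\},\{b,c\}\}$, whose twists are isomorphic and represented by $T_6*\{a,d\}$; and for $T_7$ the combined constraints $|A\cap\{a,x\}|=1$ for $x\in\{b,c,d\}$ with $|A\cap\{a,b,c,d\}|=2$ are inconsistent (if $a\in A$ these force $|A\cap\{a,b,c,d\}|=1$, while if $a\notin A$ they force it to equal $3$), so no equicardinal twist of $T_7$ exists. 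The resulting list matches the corollary exactly. The only real effort lies in this concluding case analysis; the parity observation is what reduces it to a short explicit check.
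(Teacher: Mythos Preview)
Your proof is correct and follows essentially the same approach as the paper, which merely remarks that a delta-matroid is a matroid exactly when its feasible sets are equicardinal and that the corollary then follows from Theorem~\ref{exdelta}. You have filled in the details of that reduction, including the observation that minors preserve equicardinality and the explicit case analysis identifying the equicardinal twists in $\mathcal{S}\cup\mathcal{T}$, which the paper leaves to the reader (and to the tables in the appendix).
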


Excluded-minor characterizations for a number of minor-closed classes
of matroids are known.  For a minor-closed class of matroids
$\mathcal{M}$, let $\Ex(\mathcal{M})$ be its set of excluded minors.
The next corollary follows immediately from
Corollary~\ref{exmatroids}.

\begin{cor}\label{exmsdelta}
  For a minor-closed class of matroids $\mathcal{M}$, a proper set
  system $S=(E,\mathcal{F})$ is in $\mathcal{M}$ if and only if all of
  the sets in $\mathcal{F}$ have the same size and $S$ has no minor
  isomorphic to a set system in
  \[\Ex(\mathcal{M})\cup \{T_5* \{a,d\},\,\, T_6* \{a,d\}\}\cup
  \{S_{2k}* \{e_1,e_2,\dots ,e_k\}:k\geq 2\}.\]
\end{cor}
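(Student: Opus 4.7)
The plan is to reduce Corollary~\ref{exmsdelta} to Corollary~\ref{exmatroids} together with the defining property of excluded minors for the minor-closed class $\mathcal{M}$, so only the bookkeeping needs attention.

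First I would record the observation underpinning the whole argument: if a proper set system $S=(E,\mathcal{F})$ happens to be (the set of bases of) a matroid $M$, then the set-system operations $S\ba e$ and $S/e$ coincide with the matroid operations $M\ba e$ and $M/e$ read off through their bases. This is immediate from the definitions reviewed in Section~2: an element is a loop (coloop) in the set-system sense exactly when it is a loop (coloop) of $M$, and in the non-degenerate cases the formulas defining $S\ba e$ and $S/e$ specialize to the standard basis descriptions of $M\ba e$ and $M/e$. Hence minors of the set system $S$ agree with matroid minors of $M$; in particular, for a minor-closed class $\mathcal{M}$ of matroids, $M\in\mathcal{M}$ if and only if $M$ has no matroid-minor isomorphic to a member of $\Ex(\mathcal{M})$, and this is equivalent to $S$ having no set-system minor isomorphic to any matroid in $\Ex(\mathcal{M})$.

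For the forward direction, assume $S\in\mathcal{M}$. Then $S$ is a matroid, so all feasible sets have the same size; Corollary~\ref{exmatroids} then already rules out any minor of $S$ isomorphic to a member of $\{T_5*\{a,d\},\, T_6*\{a,d\}\}\cup\{S_{2k}*\{e_1,\ldots,e_k\}:k\geq 2\}$. The remark of the previous paragraph, combined with minor-closure of $\mathcal{M}$, rules out any minor isomorphic to a member of $\Ex(\mathcal{M})$.

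For the converse, suppose that every set in $\mathcal{F}$ has the same size and that $S$ has no minor isomorphic to any set system in the three listed collections. The absence of minors from $\{T_5*\{a,d\},\, T_6*\{a,d\}\}\cup\{S_{2k}*\{e_1,\ldots,e_k\}:k\geq 2\}$, together with equi-cardinality, lets us invoke Corollary~\ref{exmatroids} to conclude that $S$ is a matroid $M$. The absence of minors from $\Ex(\mathcal{M})$ then gives $M\in\mathcal{M}$, i.e.\ $S\in\mathcal{M}$, completing the proof. I do not anticipate a real obstacle: the only point needing care is the identification of set-system minors with matroid minors when the set system is already a matroid, and that is a routine unpacking of definitions.
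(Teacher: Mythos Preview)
Your proof is correct and takes essentially the same approach as the paper, which simply states that the corollary ``follows immediately from Corollary~\ref{exmatroids}'' without further argument. You have carefully spelled out the routine details the paper omits, in particular the identification of set-system minors with matroid minors when the set system is a matroid.
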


Let $\mathbb{F}$ be a finite field. For a finite set $E$, let $C$ be a
skew-symmetric $|E|$ by $|E|$ matrix over $\mathbb{F}$, with rows and
columns indexed by the elements of $E$.  Thus, the diagonal of $C$ can
be non-zero only when $\mathbb{F}$ has characteristic two.  Let
$C\left[ A\right]$ be the principal submatrix of $C$ induced by the
set $A \subseteq E$.  Bouchet showed in~\cite{abrep} that we obtain a
delta-matroid, denoted $D(C)$, with ground set $E$ by taking as the
feasible sets all $A\subseteq E$ such that the rank of the matrix
$C[A]$ is $|A|$.  A delta-matroid is called \emph{representable over
  $\mathbb{F}$} if it has a twist that is isomorphic to $D(C)$ for
some skew-symmetric matrix $C$. Note that the empty set is feasible in
$D(C)$.  Thus, for a delta-matroid $D$, if
$D_{\min}\neq (E ,\{\emptyset\})$, then $D$ does not have a matrix
representation.  However, every delta-matroid has a partial dual that
has the empty set as a feasible set; simply take the twist on any
feasible set.  In particular, any matroid $M$ with rank exceeding zero
does not have the empty set as a basis, but, for any basis $B$ of $M$,
the delta-matroid $M*B$ has the empty set among its feasible sets.
The following result by Bouchet~\cite{abrep} shows that, as one would
infer from the common terminology, delta-matroid representability
agrees with matroid representability on the class of matroids.

\begin{prop}\label{binisbin}
  A matroid representable over a field $\mathbb{F}$ is also
  representable over $\mathbb{F}$ as a delta-matroid.
\end{prop}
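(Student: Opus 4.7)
The plan is to realize the matroid $M$ as a delta-matroid $D(C)$ up to a twist by using the classical construction that turns a matroid representation into a skew-symmetric matrix. Let $M$ be representable over $\mathbb{F}$ with ground set $E$, and fix a basis $B$ of $M$. Choose a representation of $M$ by a matrix of the form $[\,I\mid A\,]$, where the columns of $I$ are indexed by $B$ and those of $A$ by $E-B$. Define the $|E|\times|E|$ skew-symmetric matrix
\[
  C \;=\; \begin{pmatrix} 0 & A \\ -A^{T} & 0 \end{pmatrix},
\]
with rows and columns indexed by $B$ then by $E-B$, and with all diagonal entries zero (so $C$ is skew-symmetric over any field). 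I then claim that $D(C) = M*B$, which by definition exhibits $M$ as representable over $\mathbb{F}$ as a delta-matroid.

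To verify the claim, I would fix an arbitrary $F\subseteq E$, write $F = J \cup K$ with $J = F\cap B$ and $K = F\cap (E-B)$, and observe that the principal submatrix $C[F]$ has block form
\[
  C[F] \;=\; \begin{pmatrix} 0_{J\times J} & A[J,K] \\ -A[J,K]^{T} & 0_{K\times K}\end{pmatrix}.
\]
A short computation (column operations that clear the top-right block using the bottom-left block, or direct inspection of the row space) shows that the rank of this block matrix is exactly $2\cdot\mathrm{rank}\bigl(A[J,K]\bigr)$. Hence $F$ is feasible in $D(C)$, meaning $\mathrm{rank}(C[F]) = |F|$, if and only if $|J| = |K|$ and the square submatrix $A[J,K]$ is nonsingular.

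Finally, I would invoke the standard description of the bases of $M$ in terms of $[\,I\mid A\,]$: a set $B'$ of size $|B|$ is a basis of $M$ exactly when, writing $J = B - B'$ and $K = B' - B$, the submatrix $A[J,K]$ is nonsingular. Equivalently, the feasible sets of $M*B$ are precisely the sets $B\triangle B' = J\cup K$ with $|J|=|K|$ and $A[J,K]$ nonsingular. Combining this with the rank computation gives $\mathcal{F}(D(C)) = \mathcal{F}(M*B)$, so $M = D(C)*B$, proving the proposition. The only step that requires any real care is the rank computation for $C[F]$, and I would verify it in characteristic $2$ as well as elsewhere (the formula $\mathrm{rank}(C[F]) = 2\,\mathrm{rank}(A[J,K])$ does not depend on the characteristic), noting that the diagonal is zero so skew-symmetry is automatic.
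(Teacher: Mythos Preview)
Your proposal is correct and follows exactly the construction the paper gives: the paper states (without detailed verification) that from a representation $(I\mid A)$ with respect to a basis $B$, the skew-symmetric matrix $C=\begin{pmatrix}0&A\\-A^T&0\end{pmatrix}$ satisfies $M*B=D(C)$. You supply the rank computation and the basis-submatrix correspondence that the paper leaves as ``not difficult to see,'' so your argument is a fleshed-out version of the paper's own sketch.
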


To be explicit, suppose that a matroid $M$ is representable over
$\mathbb{F}$ and that $B$ is a basis of $M$.  Then $M$ has a
representation of the form $(I|A)$ where $I$ is a $|B| \times |B|$
identity matrix and the columns of $I$ correspond to the elements of
$B$. It is not difficult to see that if
\[ C = \begin{pmatrix} 0 & A \\ -A^T & 0\end{pmatrix},\]
then $M*B = D(C)$.

A delta-matroid representable over the field with two elements is
called \emph{binary}.  Let
\begin{itemize}
\item $P_1=(\{a,b,c\},\{\emptyset , \{a,b\},\{a,c\},
  \{b,c\},\{a,b,c\}\})$;
\item $P_2=(\{a,b,c\},\{\emptyset,\{a\},\{b\}, \{c\}, \{a,b\},\{a,c\},
  \{b,c\}\})$;
\item  $P_3=(\{a,b,c\},\{\emptyset ,\{b\}, \{c\},
  \{a,b\},\{a,c\},\{a,b,c\}\})$;
\item  $P_4=(\{a,b,c,d\},\{\emptyset , \{a,b\},\{a,c\}, \{a,d\},\{b,c\},
  \{b,d\}, \{c,d\}\})$;
\item $P_5=(\{a,b,c,d\},\{\emptyset , \{a,b\},\{a,d\},\{b,c\}, \{c,d\},
\{a,b,c,d\}\})$.
\end{itemize}
Let $\mathcal{P}$ be the set of all twists of $P_1, P_2,P_3,P_4$, or
$P_5$.  In~\cite{rep}, Bouchet and Duchamp proved the following
theorem.

\begin{thm}\label{binarychar}
  A delta-matroid is a binary delta-matroid if and only if it has no minor
  isomorphic to a delta-matroid in $\mathcal{P}$.
\end{thm}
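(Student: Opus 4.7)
\emph{Overall strategy.} Since we are characterizing a minor-closed class by excluded minors, the argument splits into verifying that every delta-matroid in $\mathcal{P}$ is a genuine obstruction, and then showing no other excluded minors exist. I would start by observing that binary delta-matroids form a minor-closed class. Closure under twists is immediate from the definition. Closure under deletion holds because $D(C)\ba e=D(C')$ where $C'$ is the principal submatrix of $C$ obtained by discarding the row and column indexed by $e$; using the principal-pivot formula $D(C)*B=D(C'')$ valid when $B\in\mathcal{F}(D(C))$, a general twist can be pushed past a deletion. Contraction closure then follows from $D/e=(D*e)\ba e$.

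\emph{Forward direction.} Since the class is closed under minors and twists, it is enough to check that each of $P_1,\ldots,P_5$ fails to be binary. In each case, any candidate skew-symmetric matrix $C$ over $\mathbb{F}_2$ has its diagonal entries forced by which singletons lie in $\mathcal{F}$ and its off-diagonal entries forced by which pairs do. For instance, for $P_1$ there are no feasible singletons, so $C$ has zero diagonal, and every doubleton is feasible, so $C$ has all $1$s off the diagonal; but then $\det C[\{a,b,c\}]=0$ over $\mathbb{F}_2$, contradicting the feasibility of $\{a,b,c\}$ in $P_1$. An analogous short determinant computation handles each of $P_2,P_3,P_4,P_5$.

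\emph{Converse direction; setup.} Let $D=(E,\mathcal{F})$ be an excluded minor for the class of binary delta-matroids. By twisting on a feasible set, we may assume $\emptyset\in\mathcal{F}$. Minimality tells us every proper restriction $D|X$ is binary, and since $\emptyset$ remains feasible in $D|X$, there is a symmetric $\mathbb{F}_2$-matrix $C_X$ with $D|X=D(C_X)$. The entries of $C_X$ are uniquely determined by $1$- and $2$-element feasibility, so the $C_X$ agree on overlaps and assemble into a global symmetric matrix $C$ on $E$. Since $D$ is not binary, $D(C)\neq D$, and the disagreement must occur on $E$ itself: $E\in\mathcal{F}(D)\triangle\mathcal{F}(D(C))$.

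\emph{Main obstacle: bounding $|E|$ and classifying.} The remaining task is to prove that $|E|\le 4$, after which a finite inspection (aided by the twist/minor closure of $\mathcal{P}$) identifies $D$ with one of $P_1,\ldots,P_5$. My plan is to apply Axiom~(SE) to the triples $(E,\emptyset,e)$ for each $e\in E$. Because $D$ and $D(C)$ already agree on all proper subsets of $E$, the feasibility or infeasibility of each $E\triangle\{e,f\}$ in $D$ is determined by $C$, and the resulting system of (SE)-consequences can be compared with the rank data of the principal submatrices of $C$. For $|E|\ge 5$, these constraints force a proper subset on which $D$ and $D(C)$ disagree, contradicting the minimality of $E$ as the disagreement set. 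Making this propagation argument airtight, without losing control of the case analysis, is the main obstacle; it mirrors the technique in~\cite{rep}, where the interplay between (SE) and the linear-algebraic rank of $C[A]$ is used to push every disagreement down to a witness of size at most $4$. Once $|E|\le 4$, the classification reduces to a finite case check that produces exactly $P_1,\ldots,P_5$ up to twist.
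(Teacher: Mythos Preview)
The paper does not prove this theorem. It is stated with the attribution ``In~\cite{rep}, Bouchet and Duchamp proved the following theorem,'' and is used as a black box to derive Corollary~\ref{exbindelta}. There is therefore no proof in the paper to compare your proposal against.

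As for the proposal itself: your forward direction and your setup for the converse are the standard opening moves and are fine (the principal-pivot identity $D(C)*B=D(C'')$ for feasible $B$ is indeed what lets you reduce to checking that the $P_i$ themselves, not all their twists, fail to be $D(C)$). But your converse is not a proof: at the point labeled ``main obstacle'' you explicitly defer to \cite{rep} for the argument that forces $|E|\le 4$, and that is precisely the content of the theorem. Saying the constraints from (SE) applied to $(E,\emptyset,e)$ ``force a proper subset on which $D$ and $D(C)$ disagree'' when $|E|\ge 5$ is an assertion, not an argument; the actual work in Bouchet--Duchamp is a careful analysis relating the (SE) exchanges to the $\mathbb{F}_2$-linear algebra of principal minors, and you have not reproduced any of it. So what you have written is an outline that points to \cite{rep} at the crucial step, which is exactly what the paper itself does by citing the result outright.
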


It is worth noting that $P_5*\{a,c\}\cong U_{2,4}$.  Thus the unique excluded
minor for binary matroids is in $\mathcal{P}$, as one would expect.
Combining Theorem~\ref{exdelta} with Bouchet's characterization gives
the following corollary.

\begin{cor}\label{exbindelta}
  A proper set system $S$ is a binary delta-matroid if and only if $S$
  has no minor isomorphic to a set system in
  $\mathcal{P}\cup\mathcal{S} \cup \mathcal{T}$.
\end{cor}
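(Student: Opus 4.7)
The plan is to derive this corollary as a direct combination of Theorem~\ref{exdelta} and Theorem~\ref{binarychar}, with essentially no additional work beyond stitching the two characterizations together. The observation is that binary delta-matroids sit inside delta-matroids, which sit inside proper set systems, so an excluded-minor characterization of binary delta-matroids within proper set systems is obtained by taking the union of the obstructions at each level.

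For the necessity direction, suppose that $S$ is a binary delta-matroid. Since every binary delta-matroid is a delta-matroid, Theorem~\ref{exdelta} gives that $S$ has no minor isomorphic to a set system in $\mathcal{S}\cup\mathcal{T}$. Since $S$ is binary, Theorem~\ref{binarychar} gives that $S$ has no minor isomorphic to a set system in $\mathcal{P}$. Hence $S$ has no minor isomorphic to any set system in $\mathcal{P}\cup\mathcal{S}\cup\mathcal{T}$.

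For the sufficiency direction, suppose that the proper set system $S$ has no minor isomorphic to any set system in $\mathcal{P}\cup\mathcal{S}\cup\mathcal{T}$. In particular, $S$ has no minor isomorphic to a set system in $\mathcal{S}\cup\mathcal{T}$, so by Theorem~\ref{exdelta}, $S$ is a delta-matroid. Then, since $S$ additionally has no minor isomorphic to a set system in $\mathcal{P}$, Theorem~\ref{binarychar} gives that $S$ is binary.

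There is really no main obstacle here beyond bookkeeping; the content was done in Theorems~\ref{exdelta} and~\ref{binarychar}. The only thing one might want to double-check, although it is immediate from the definition of minors for set systems and the fact that both classes are minor-closed, is that the notion of minor is the same throughout, so that a minor of the set system $S$ that happens to be a delta-matroid coincides with the corresponding minor of $S$ viewed as a delta-matroid once delta-matroid status has been established. Given this, the proof is a one-line appeal to each of the two earlier theorems in sequence.
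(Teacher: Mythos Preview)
Your proof is correct and follows exactly the approach the paper takes: the paper simply states that the corollary is obtained by combining Theorem~\ref{exdelta} with Bouchet's characterization (Theorem~\ref{binarychar}), without writing out the details. Your write-up just makes explicit the two directions of this combination.
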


\section{Matroid stack delta-matroids}\label{sec:msdm}

In Section~\ref{sec:hl}, we found that the collection of bases of the
Higgs lifts between a quotient of a matroid $M$ and $M$, or of an
appropriately chosen subcollection of these Higgs lifts, gives a
delta-matroid.  In Section~\ref{sec:lp}, we considered Higgs lifts
between particular pairs of lattice path matroids.  It is natural to
ask, more generally, when a set of matroids can form the layers of a
delta-matroid.  More precisely, suppose we take matroids
$M_1,M_2,\ldots ,M_n$ on $E$ where $1\leq r(M_{i+1})-r(M_i)\leq 2$ for
all $i\in\{1,2,\ldots ,n-1\}$.  Under what circumstances is the set
system
$\bigl(E,\mathcal{B}(M_1)\cup \mathcal{B}(M_2)\cup\cdots \cup
\mathcal{B} (M_n)\bigr)$ a delta-matroid?  This is what we explore in
this section.

Let $S=(E,\mathcal{F})$ be a proper set system where the smallest sets
in $\mathcal{F}$ have cardinality $k$ and the largest have cardinality
$\ell$.  Let $N_i$ be the set system
$(E,\{F: |F|=i \text{ and }F\in\mathcal{F}\})$ for
$i\in\{k, k+1,\dots ,\ell\}$.  We say that $N_k,N_{k+1},\dots ,N_\ell$
is the \emph{stack of $S$}.  If, for some $i$ between $k$ and $\ell$,
no sets in $\mathcal{F}$ have size $i$, then $N_i=(E,\emptyset)$,
which is not proper.  If every proper set system in the stack of $S$
is a matroid, then we say that $S$ is a \emph{matroid stack set
  system}.  Furthermore, if $S$ is a delta-matroid, then we say that
$S$ is a \emph{matroid stack delta-matroid}.  Since the dual of a
matroid is a matroid, it follows that the dual of a matroid stack set
system is also a matroid stack set system, and likewise for a matroid
stack delta-matroid.

We show that if the matroids in the stack of a matroid stack set
system $S$ all belong to a minor-closed class $\mathcal{M}$, then the
proper set systems in the stack of any minor of $S$ all belong to
$\mathcal{M}$. In particular, this implies that the class of matroid
stack delta-matroids is closed under taking minors.

\begin{lemma}\label{matroidstackminors}
  Let $\mathcal{M}$ be a minor-closed class of matroids.  Let
  $S=(E,\mathcal{F})$ be a matroid stack set system where the matroids
  in the stack of $S$ are in $\mathcal{M}$.  If $S'$ is a minor of
  $S$, then $S'$ is a matroid stack set-system, and the matroids in
  the stack of $S'$ are all in $\mathcal{M}$.
\end{lemma}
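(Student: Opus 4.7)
The plan is to reduce the statement to a claim about each level of the stack individually. By Lemma~\ref{minorisminor}, I may write $S'=S\setminus X/Y$ for disjoint $X,Y\subseteq E$ with some feasible set of $S$ disjoint from $X$ and containing $Y$; equation~(\ref{eq:minor}) then gives $\mathcal{F}(S')=\{F-Y : F\in\mathcal{F},\ Y\subseteq F\subseteq E-X\}$. For each nonnegative integer $j$, let $N'_j$ be the set system on $E-(X\cup Y)$ whose feasible sets are those of size $j$ in $\mathcal{F}(S')$. These arise precisely from the feasible sets of $N_{j+|Y|}$ that contain $Y$ and avoid $X$, so $N'_j$ is proper only if $N_{j+|Y|}$ is. It therefore suffices to show that whenever $N'_j$ is proper it is equal to the matroid-theoretic minor $N_{j+|Y|}\setminus X/Y$, which lies in $\mathcal{M}$ by minor-closure.

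Assume $N'_j$ is proper. Then $N_{j+|Y|}$ is proper, hence is a matroid $M\in\mathcal{M}$, and $M$ has a basis $B$ with $Y\subseteq B\subseteq E-X$. Because $B$ is an independent set of size $r(M)$ in $M\setminus X=M|(E-X)$, we have $r(M\setminus X)=r(M)=j+|Y|$, and $Y\subseteq B$ is independent in $M\setminus X$. Thus $M\setminus X/Y$ is a matroid, and its bases are exactly the sets $B'-Y$ where $B'$ is a basis of $M\setminus X$ containing $Y$. Since $r(M\setminus X)=r(M)$, every such $B'$ is itself a basis of $M$ disjoint from $X$, so these bases coincide with the feasible sets of $N'_j$. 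This gives $N'_j=M\setminus X/Y=N_{j+|Y|}\setminus X/Y\in\mathcal{M}$, as required.

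The main obstacle is precisely the identification carried out in the previous paragraph: set-system deletion and contraction on the layer $N_{j+|Y|}$ agree with the matroid operations on the corresponding matroid only because the hypothesis that $N'_j$ is proper forces $r(M\setminus X)=r(M)$, which in turn makes matroid contraction by $Y$ coincide with the set-system contraction by $Y$ (namely, removing $Y$ from each basis of $M$ that contains it). Once this is established, every proper level of the stack of $S'$ is a matroid in $\mathcal{M}$, proving the lemma; the special case in which $\mathcal{M}$ is the class of all matroids then yields the advertised corollary that the class of matroid stack delta-matroids is closed under minors.
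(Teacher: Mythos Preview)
Your proof is correct, and it takes a genuinely different route from the paper's. The paper proceeds one element at a time: it shows directly that every proper level of the stack of $S\setminus e$ equals $M\setminus e$ for the corresponding matroid $M$ in the stack of $S$, and then handles $S/e$ by a duality trick, passing to $\mathcal{M}^*=\{M^*:M\in\mathcal{M}\}$ and using $(S^*\setminus e)^*=S/e$. Your argument instead invokes Lemma~\ref{minorisminor} once to write $S'=S\setminus X/Y$ globally, and then identifies each proper level $N'_j$ of the stack of $S'$ directly with the matroid minor $N_{j+|Y|}\setminus X/Y$. The key observation that makes this work---that properness of $N'_j$ forces $r(M\setminus X)=r(M)$ and $Y$ independent in $M\setminus X$, so that the matroid-theoretic bases of $M\setminus X/Y$ are exactly $\{B-Y:B\in\mathcal{B}(M),\ Y\subseteq B\subseteq E-X\}$---is correctly isolated and justified. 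Your approach is more direct and treats deletion and contraction symmetrically, avoiding the duality detour; the paper's approach is shorter on the deletion side and uses duality to avoid repeating an analogous contraction argument. Both are valid and of comparable length.
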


\begin{proof}
  Take $e\in E$.  It suffices to show that all of the proper set
  systems in the stack of $S\ba e$ and $S/e$ are matroids in
  $\mathcal{M}$.  We consider $S\ba e$ first.  If $e$ is a coloop of
  $S$, then $e$ is a coloop of every matroid in the stack of $S$, and
  the result is clear.  So assume that $e$ is not a coloop.  Let
  $N=(E,\mathcal{F}')$ be a proper set system in the stack of
  $S\ba e$.  The sets in $\mathcal{F}'$ are equicardinal feasible sets
  in $\mathcal{F}(S)$ that avoid $e$, so $S$ has a matroid $M$ in its
  stack such that $\mathcal{F}'\subseteq\mathcal{B}(M)$.  Furthermore,
  the sets in $\mathcal{F}'$ are exactly the bases of $M$ that avoid
  $e$, so $N=M\ba e$.  Thus $N\in\mathcal{M}$.

  Let $\mathcal{M}^*= \{M^*:M \in \mathcal M\}$. Then $\mathcal M^*$
  is a minor-closed class of matroids.  Note that $S^*$ is a matroid
  stack for which each proper set system in the stack belongs to
  $\mathcal M^*$.  Hence the stack of $S^*\ba e$ has all of its proper
  set systems in $\mathcal{M^*}$, and so $(S^*\ba e)^*$ is a matroid
  stack for which each proper set system in the stack belongs to
  $\mathcal M$. This last set system is equal to $S/e$.
\end{proof}

In the next corollary, we use Theorem~\ref{exdelta} to find the
excluded minors for matroid stack delta-matroids within the class of
matroid stack set systems.  The excluded minors are exactly those set
systems in $\mathcal{S}\cup\mathcal{T}$ that are matroid stack set
systems.  Note that any proper set system $(E,\mathcal{F})$ where
$|E|=3$ and the sets in $\mathcal{F}$ are equicardinal is a matroid.
For this reason, every twist of $T_1$, $T_2$, $T_3$, or $T_4$ is an
excluded minor for matroid stack delta-matroids.  Let
$\mathcal{T}_{1,2,3,4}$ be the set of these twists.

\begin{cor}\label{exmatroidstack}
  Let $D$ be a matroid stack set system.  Then $D$ is a matroid stack
  delta-matroid if and only if it contains no minor isomorphic to a
  set system in any of the following sets:
  \begin{enumerate}
  \item $\{S_{k}*X:k\geq 3, X\subseteq E(S_{k}),\text{ and }|X|\neq
    k/2\}$,
  \item $\mathcal{T}_{1,2,3,4}$,
  \item $\{T_5,\,\, T_5* a,\,\, T_5*\{b,c,d\} \}$,
  \item $\{T_6,\,\, T_6*a,\,\, T_6*b,\,\, T_6*\{b,c,d\} \}$,
  \item $\{T_7,\,\, T_7*a,\,\, T_7*b,\,\, T_7*\{a,c,d\},\,\,
    T_7*\{b,c,d\},\,\, T_7^*\}$,
  \item $\{T_8,\,\, T_8*a,\,\, T_8*b,\,\, T_8*\{a,c,d\},\,\,
    T_8*\{b,c,d\},\,\, T_8^*\}$.
  \end{enumerate}
\end{cor}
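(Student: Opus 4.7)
The plan is to combine Theorem~\ref{exdelta} with Lemma~\ref{matroidstackminors}. Since the class of matroid stack set systems is minor-closed (apply Lemma~\ref{matroidstackminors} with $\mathcal{M}$ the class of all matroids), an excluded minor for the subclass of matroid stack delta-matroids within matroid stack set systems is precisely a matroid stack set system that is also an excluded minor for delta-matroids among proper set systems. By Theorem~\ref{exdelta}, those latter excluded minors form the set $\mathcal{S}\cup\mathcal{T}$, so the corollary reduces to identifying exactly which members of $\mathcal{S}\cup\mathcal{T}$ are matroid stack set systems.

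With this reduction in hand, the forward direction is immediate: if $D$ has a minor in any of the listed sets (1)--(6), which I would verify are all subsets of $\mathcal{S}\cup\mathcal{T}$, then by Theorem~\ref{exdelta} $D$ fails to be a delta-matroid and so is not a matroid stack delta-matroid. For the converse, if $D$ is a matroid stack set system but not a matroid stack delta-matroid, then $D$ is not a delta-matroid, so by Theorem~\ref{exdelta} it has a minor $D'$ in $\mathcal{S}\cup\mathcal{T}$, and by Lemma~\ref{matroidstackminors} $D'$ is itself a matroid stack set system; it remains only to show that every such $D'$ appears in the enumerated lists.

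Thus the real work is to check which members of $\mathcal{S}\cup\mathcal{T}$ are matroid stack set systems. For $\mathcal{S}$, the twist $S_k*X$ has feasible sets $X$ and $E(S_k)-X$, of sizes $|X|$ and $k-|X|$: when these differ, each layer of the stack is a singleton and is trivially a matroid, whereas when they agree, which forces $k$ even, $k\geq 4$, and $|X|=k/2$, the single stack layer $\bigl(E(S_k),\{X,E(S_k)-X\}\bigr)$ consists of two sets differing in at least four elements and so fails basis exchange. This gives exactly item~(1). For twists of $T_1,T_2,T_3,T_4$, every layer lives on three elements with equicardinal feasible sets and is automatically a matroid, yielding $\mathcal{T}_{1,2,3,4}$ as in item~(2).

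The hard part will be the twists of $T_5, T_6, T_7, T_8$ on four elements: here I would go through the 51 twists enumerated in Tables~\ref{t1}--\ref{t8} one by one, isolating each stack layer with two or more feasible sets and checking basis exchange. The twists that pass give exactly items (3)--(6); the ones that fail have, in some layer, two equicardinal feasible sets that differ in more than one pair of elements and cannot be reconciled by any exchange with the other sets at that level. The obstacle is the sheer volume of this case analysis, but the appendix tables are organized so that each verification amounts to inspecting at most a single two-element layer of the stack.
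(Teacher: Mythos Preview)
Your proposal is correct and follows essentially the same approach as the paper: combine Theorem~\ref{exdelta} with Lemma~\ref{matroidstackminors} to reduce to checking which members of $\mathcal{S}\cup\mathcal{T}$ are matroid stack set systems, then carry out that check. Your write-up actually supplies more detail than the paper's proof, which simply states that the result follows by this check; in particular, your observations that only the $|X|=k/2$ twists of $S_k$ fail and that for four-element systems only the size-two layer can fail basis exchange are accurate and make the case analysis efficient.
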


\begin{proof}
  If $D$ is a matroid stack set system that is not a delta-matroid
  then it must have a minor $D'$ isomorphic to a set system in
  $\mathcal S \cup \mathcal T$. Moreover,
  Lemma~\ref{matroidstackminors} implies that $D'$ must be a matroid
  stack set system. The result follows by checking which elements of
  $\mathcal S \cup \mathcal T$ are matroid stack systems.
\end{proof}

\begin{figure}
\includegraphics{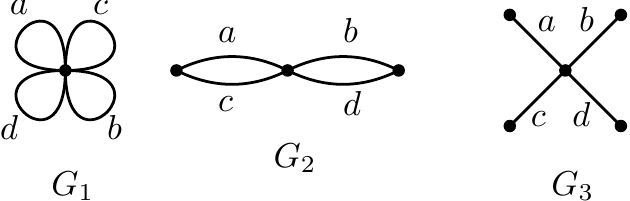}
\caption{The spanning trees of these graphs are the feasible sets of
  $P_5$.}\label{p5graphs}
\end{figure}

Note that representability within the stack of a matroid stack
delta-matroid does not guarantee representability of the delta-matroid.
For example, $P_5$ is an excluded minor for binary delta-matroids,
but it is also a matroid stack delta-matroid where each matroid in the
stack is binary.  In fact, each matroid is graphic, and these graphs are
depicted in Figure~\ref{p5graphs}.

The class of even delta-matroids is minor-closed.  Hence the next
result is a corollary of Lemma~\ref{matroidstackminors}.

\begin{cor}\label{evenms}
  The class of matroid stack delta-matroids that are even is minor-closed
  and dual-closed.
\end{cor}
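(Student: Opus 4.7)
The plan is to assemble the corollary from three closure properties that have already been recorded in the paper: minor-closure of matroid stack delta-matroids (via Lemma~\ref{matroidstackminors}), minor- and dual-closure of even set systems (noted in the minors and twists subsection), and dual-closure of matroid stack delta-matroids (noted immediately after the definition in this section). No new structural work is needed; the task is to record that the intersection of the relevant classes inherits both properties.

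First I would handle minor-closure. Apply Lemma~\ref{matroidstackminors} with $\mathcal{M}$ equal to the class of all matroids: this gives that every minor of a matroid stack set system is again a matroid stack set system. Combining this with the fact that minors of proper set systems are proper, and that the minor of a delta-matroid is a delta-matroid (Bouchet--Duchamp, as recalled after Lemma~\ref{minorisminor}), the class of matroid stack delta-matroids is minor-closed. Since the paper records that minors of even set systems are even, the further restriction to even members is still minor-closed.

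Next I would handle dual-closure. The remark just before Lemma~\ref{matroidstackminors} already notes that the dual of a matroid stack set system is a matroid stack set system, and similarly for matroid stack delta-matroids (using that $M\mapsto M^*$ preserves the matroid property and that twists of delta-matroids are delta-matroids, which follows directly from property~(SE) since $(X\triangle A)\triangle(Y\triangle A)=X\triangle Y$). The paper also notes that twists of even set systems are even, and in particular so are duals. So the class of even matroid stack delta-matroids is closed under $S\mapsto S^*$.

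There is no real obstacle here beyond bookkeeping; the only point worth naming explicitly is that Lemma~\ref{matroidstackminors} should be invoked with $\mathcal{M}$ equal to the full class of matroids (not a proper subclass), so that it delivers the minor-closure of matroid stack set systems on its own, to which the minor-closure of evenness is then appended.
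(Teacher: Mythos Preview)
Your proposal is correct and follows essentially the same approach as the paper, which simply records (in the sentence preceding the corollary) that it follows from Lemma~\ref{matroidstackminors} together with minor-closure of even delta-matroids, the dual-closure having already been noted after the definitions. Your write-up is somewhat more explicit about the dual-closure half, but the ingredients and logic are the same.
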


The following result is easily obtained from
Corollary~\ref{exmatroidstack} by identifying those set systems in the
excluded minors for matroid stack delta-matroids that are even.

\begin{cor}\label{exmatroidstackeven}
  An even matroid stack set system is an even matroid stack
  delta-matroid if and only if it contains no minor isomorphic to a
  set system in one of the following sets:
  \begin{enumerate}
  \item $\{S_{2k}*X:k\geq 2, X\subseteq E(S_{2k}),\text{ and }|X|\neq
    k\}$,
  \item $\{T_5,\,\, T_5*a,\,\, T_5*\{b,c,d\} \}$,
  \item $\{T_6,\,\, T_6*a,\,\, T_6*b,\,\, T_6*\{b,c,d\} \}$,
  \item $\{T_7,\,\, T_7*a,\,\, T_7*b,\,\, T_7*\{b,c,d\},\,\,
    T_7*\{a,c,d\},\,\, T_7^*\}$.
    \end{enumerate}
\end{cor}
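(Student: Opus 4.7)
The plan is to deduce this corollary directly from Corollary~\ref{exmatroidstack} by restricting attention to even set systems. Since an even set system is a matroid stack delta-matroid if and only if it is an even matroid stack delta-matroid (being even is automatic from the hypothesis), an even matroid stack set system fails to be an even matroid stack delta-matroid exactly when it has a minor isomorphic to one of the six families of excluded minors listed in Corollary~\ref{exmatroidstack}. Because every minor of an even set system is itself even, only the \emph{even} excluded minors from that list can occur as minors of our even set system. So the task reduces to sifting out which set systems among the six families in Corollary~\ref{exmatroidstack} are even.

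The key observation is that twists preserve evenness, so I only need to identify which of $S_k, T_1, \dots, T_8$ have twists that appear in Corollary~\ref{exmatroidstack} and are even. First I would deal with $\mathcal{S}$: the twist $S_k * X$ has feasible sets $X$ and $E(S_k) - X$, of sizes $|X|$ and $k - |X|$, whose difference is even precisely when $k$ is even. Thus item (1) of Corollary~\ref{exmatroidstack} contributes exactly those $S_{2k} * X$ with $k \geq 2$ and $|X| \neq k$, which is item~(1) of the statement to be proved. Next, every set system in $\mathcal{T}_{1,2,3,4}$ has ground set of odd size $3$ with both $\emptyset$ and the ground set feasible, so none is even; item~(2) of Corollary~\ref{exmatroidstack} contributes nothing.

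For items (3)--(5) of Corollary~\ref{exmatroidstack}, the set systems $T_5, T_6, T_7$ are all even on their four-element ground sets (each feasible set has even size, cf.~the discussion of $U_3,\dots,U_7$), and consequently every twist is even; these carry over verbatim as items~(2)--(4) of the statement. Finally, $T_8$ has $\{a\}$ and $\{a,b\}$ both feasible, so $T_8$ is not even, and neither is any of its twists (twists preserve evenness); hence item~(6) of Corollary~\ref{exmatroidstack} contributes nothing.

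There is no serious obstacle here; the only subtlety is the parity bookkeeping for twists $S_{2k} * X$, where one has to notice that the condition $|X| \neq k/2$ of Corollary~\ref{exmatroidstack} becomes $|X| \neq k$ after the reindexing $k \mapsto 2k$ that is forced by the evenness requirement. Assembling these observations produces exactly the four families listed in the corollary, completing the proof.
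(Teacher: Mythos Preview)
Your proposal is correct and follows exactly the approach the paper indicates: the paper states (without further detail) that the result is ``easily obtained from Corollary~\ref{exmatroidstack} by identifying those set systems in the excluded minors for matroid stack delta-matroids that are even,'' and your argument carries out precisely this filtering, with the correct parity analysis for each family.
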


We next consider matroid stack delta-matroids where each matroid in
the stack is paving.  A rank-$r$ matroid is \emph{paving} if each of
its circuits has size at least $r$.  Although the class of paving
matroids is closed under minors, it is not closed under duality.  Let
$D$ be a set system where every proper set system in its stack is a
paving matroid.  Then we say that $D$ is a \emph{paving set system}.
If $D$ is also a delta-matroid, then we say that $D$ is a \emph{paving
  delta-matroid}.  The next result follows from
Lemma~\ref{matroidstackminors}.

\begin{cor}\label{pavingminor}
  Every minor of a paving delta-matroid is a paving delta-matroid.
\end{cor}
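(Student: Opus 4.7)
The plan is to invoke Lemma~\ref{matroidstackminors} with $\mathcal{M}$ taken to be the class of paving matroids. Once we know that paving matroids form a minor-closed class, the corollary follows immediately by combining Lemma~\ref{matroidstackminors} with the fact, noted after Lemma~\ref{minorisminor}, that delta-matroids are closed under minors.

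The only substantive verification is that the class of paving matroids is minor-closed. Let $M$ be a rank-$r$ paving matroid, so every circuit has size at least $r$. For deletion of a non-coloop $e$, the matroid $M\setminus e$ still has rank $r$, and its circuits are exactly the circuits of $M$ that avoid $e$, so each has size at least $r$; hence $M\setminus e$ is paving. For contraction of a non-loop $e$, the matroid $M/e$ has rank $r-1$, and each of its circuits $C$ is either a circuit of $M$ avoiding $e$ (size $\geq r$) or of the form $C'-e$ for a circuit $C'$ of $M$ containing $e$ (size $\geq r-1$); so $M/e$ is paving. The loop/coloop cases are handled by the convention that these operations coincide on such elements.

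With minor-closure of paving matroids in hand, the argument is very short. Let $D$ be a paving delta-matroid and $D'$ a minor of $D$. Because the class of delta-matroids is minor-closed, $D'$ is a delta-matroid. Because $D$ is a paving set system, every proper set system in its stack is a paving matroid, so Lemma~\ref{matroidstackminors}, applied with $\mathcal{M}$ equal to the minor-closed class of paving matroids, guarantees that every proper set system in the stack of $D'$ is also a paving matroid. Thus $D'$ is a paving delta-matroid.

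There is no genuine obstacle here: all the real work has already been done inside Lemma~\ref{matroidstackminors}, and the verification of minor-closure for paving matroids is a standard, one-line check for each of deletion and contraction. The corollary is essentially a direct instantiation of the general principle captured by Lemma~\ref{matroidstackminors}.
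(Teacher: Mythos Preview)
Your proof is correct and follows the same approach as the paper: the paper states just before the corollary that the class of paving matroids is closed under minors, and then says the result follows from Lemma~\ref{matroidstackminors}. Your write-up simply makes explicit the routine verification that paving matroids are minor-closed and the separate observation that minors of delta-matroids are delta-matroids, both of which the paper takes as given.
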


By identifying the paving set systems among the excluded minors
for matroid stack delta-matroids, we find the excluded minors for
paving delta-matroids.

\begin{cor}\label{expaving}
  A paving set system is a paving delta-matroid if and
  only if it contains no minor isomorphic to a set system in
  the following sets:
  \begin{enumerate}
  \item $\{S_i:i\geq 3\}$,
  \item $\{T_1*\{b,c\},\,\, T_1^*\}$,
  \item $\{T_2,\,\, T_2*\{a,b\},\,\, T_2*\{b,c\},\,\, T_2^*\}$,
  \item $\{T_3*b,\,\, T_3*\{b,c\} \}$,
  \item $\{T_4,\,\, T_4*b,\,\, T_4*\{a,c\},\,\, T_4*\{b,c\}\}$,
  \item $\{T_6*\{b,c,d\}\}$,
  \item $\{T_7,\,\, T_7*b,\,\, T_7*\{b,c,d\}\}$,
  \item $\{T_8,\,\, T_8*\{b,c,d\}\}$.
\end{enumerate}
\end{cor}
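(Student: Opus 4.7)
The plan is to follow the same template as the proof of Corollary~\ref{exmatroidstack}, with the ambient class narrowed from matroid stack set systems to paving set systems. First I would verify that paving set systems form a minor-closed class: since paving matroids are closed under taking minors, Lemma~\ref{matroidstackminors} applied to the class of paving matroids shows that every matroid in the stack of any minor of a paving set system is again paving, so paving set systems are minor-closed. Together with Corollary~\ref{pavingminor}, both the ambient class and the target class are minor-closed, so the notion of an excluded minor is well-posed.

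Next I would identify the excluded minors abstractly. Let $D$ be such an excluded minor. Then $D$ is a paving set system that is not a delta-matroid, so by Theorem~\ref{exdelta}, $D$ has a minor in $\mathcal{S}\cup\mathcal{T}$; that minor is not a delta-matroid, while every proper minor of $D$ is a paving delta-matroid, hence a delta-matroid, so the minor must equal $D$ itself. Thus $D$ is isomorphic to a member of $\mathcal{S}\cup\mathcal{T}$. Conversely, the first paragraph of the proof of Theorem~\ref{exdelta} shows that every single-element deletion or contraction of a member of $\mathcal{S}\cup\mathcal{T}$ is a delta-matroid; iterating and combining with minor-closure of paving set systems, every proper minor of a paving set system in $\mathcal{S}\cup\mathcal{T}$ is a paving delta-matroid, so it qualifies as an excluded minor. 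Since every paving set system is a matroid stack set system, one may equivalently cull the excluded minors listed in Corollary~\ref{exmatroidstack} and retain those that are paving set systems.

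The substantive remaining step, and the main obstacle, is the case analysis. The useful test is that a rank-$r$ matroid is paving if and only if every circuit has at least $r$ elements; the cheapest failure is a loop in a layer of rank at least $2$. For the twists of $S_i$ with $i\geq 3$, every twist $S_i * X$ with $0<|X|<i$ has a layer at rank $\max(|X|,i-|X|)\geq 2$ in which every element of the complementary set is a loop, so only the untwisted $S_i$ survives, giving item (1). For the twists of $T_1,T_2,\dots ,T_8$ enumerated in Corollary~\ref{exmatroidstack}, I would walk through each one using Tables~\ref{t1}--\ref{t8}, applying the loop test and, where needed, checking for other small circuits; the twists that survive in each family are exactly those listed in items (2)--(8). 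The bookkeeping is routine but must be carried out case by case.
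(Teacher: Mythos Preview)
Your proposal is correct and follows essentially the same approach as the paper: the paper's proof consists solely of the sentence ``By identifying the paving set systems among the excluded minors for matroid stack delta-matroids, we find the excluded minors for paving delta-matroids,'' and you carry out exactly this program (via Corollary~\ref{exmatroidstack} and the tables), supplying in addition the logical scaffolding (minor-closure of paving set systems, the reduction to $\mathcal{S}\cup\mathcal{T}$) that the paper leaves implicit.
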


Next we consider matroid stack delta-matroids where each matroid in
the stack is a sparse paving matroid.  A matroid is \emph{sparse
  paving} if it is paving and its dual is paving.  Equivalently, a
matroid is sparse paving if each non-spanning circuit is a hyperplane.
Let $D$ be a set system where every proper set system in its stack is
a sparse paving matroid.  Then we say that $D$ is a \emph{sparse
  paving set system}.  If $D$ is also a delta-matroid, then we say
that $D$ is a \emph{sparse paving delta-matroid}.  It is easy to see
that every minor of a sparse paving matroid is sparse paving.  Note
that the class of sparse paving delta-matroids is closed under
duality.  Hence the next result follows immediately from
Lemma~\ref{matroidstackminors}.

\begin{cor}\label{sparseminor}
  The class of sparse paving delta-matroids is minor-closed
  and dual-closed.
\end{cor}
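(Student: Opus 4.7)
The plan is to obtain both closure properties directly from Lemma~\ref{matroidstackminors}, essentially bundling it with the fact that sparse paving is, by definition, a self-dual minor-closed class of matroids.

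First I would verify the hypothesis needed to invoke Lemma~\ref{matroidstackminors}, namely that the class $\mathcal{M}$ of sparse paving matroids is minor-closed. The paper has already noted that every minor of a sparse paving matroid is sparse paving, so this is immediate. Let $D$ be a sparse paving delta-matroid on $E$ and let $D'$ be a minor of $D$. Since $D$ is a matroid stack set system all of whose proper stack set systems lie in $\mathcal{M}$, Lemma~\ref{matroidstackminors} gives that $D'$ is a matroid stack set system and every proper set system in its stack lies in $\mathcal{M}$, i.e., is sparse paving. Since the class of delta-matroids is itself minor-closed, $D'$ is a delta-matroid, hence a sparse paving delta-matroid.

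For dual-closure, I would first observe that if $N_i = (E, \{F \in \mathcal{F}(D)\,:\,|F|=i\})$ is the $i$-th layer of the stack of $D$, then taking the dual $D^* = D * E$ swaps each set $F$ with its complement $E-F$, so the layer of $D^*$ at cardinality $|E|-i$ is exactly $(E, \mathcal{B}(N_i^*))$. Thus the stack of $D^*$ consists precisely of the matroid duals of the matroids in the stack of $D$. Because sparse paving matroids are closed under duality by definition, each proper layer of the stack of $D^*$ is again sparse paving. Combining this with the fact that the dual of a delta-matroid is a delta-matroid (since twists of delta-matroids are delta-matroids), we conclude that $D^*$ is a sparse paving delta-matroid.

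No step looks genuinely difficult here; the only thing to be careful about is making the stack/dual compatibility explicit, since Lemma~\ref{matroidstackminors} already handles the minor side through the argument it uses for $S/e = (S^* \setminus e)^*$. In particular, the proof essentially echoes Corollary~\ref{evenms}, just substituting ``sparse paving'' for ``even'' and adding the dual-closure observation that was not needed there because the class of even set systems is trivially dual-closed.
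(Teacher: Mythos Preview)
Your proposal is correct and matches the paper's approach: the paper likewise derives minor-closure from Lemma~\ref{matroidstackminors} (having noted that sparse paving matroids form a minor-closed class) and simply asserts dual-closure, whereas you spell out the reason---that the stack of $D^*$ consists of the duals of the matroids in the stack of $D$. Your side remark about Corollary~\ref{evenms} is slightly off (that corollary \emph{does} assert dual-closure as well), but this does not affect the proof itself.
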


In~\cite{asymp}, it is conjectured that, asymptotically, almost all
matroids are sparse paving.  That is, if $sp(n)$ is the number of
sparse paving matroids with $n$ elements, and $m(n)$ is the number of
matroids with $n$ elements, then it is conjectured that
$\lim_{n\to\infty}\frac{sp(n)}{m(n)}=1$.  We make the following
related conjecture.

\begin{conj}
  Asymptotically, almost all matroid stack delta-matroids are sparse
  paving.
\end{conj}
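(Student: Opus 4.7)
The plan is to leverage the conjecture of \cite{asymp} that $sp(n)/m(n)\to 1$, where $sp(n)$ and $m(n)$ denote the numbers of sparse paving matroids and matroids on a ground set of size $n$. Write $ms(n)$ for the number of matroid stack delta-matroids on $[n]$ and $spms(n)$ for those that are sparse paving. A matroid $M$ on $[n]$ is itself a matroid stack delta-matroid whose stack has the single layer $M$, and is sparse paving as a matroid stack delta-matroid exactly when it is sparse paving as a matroid. This gives $m(n)\le ms(n)$ and $sp(n)\le spms(n)$, so it would suffice to prove the matching upper bound $ms(n)\le (1+o(1))\,m(n)$; combined with the asymp conjecture, this yields $spms(n)/ms(n)\to 1$ via the chain $spms(n)/ms(n)\ge sp(n)/ms(n)\ge (1-o(1))m(n)/((1+o(1))m(n))$.

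To try to prove $ms(n)\le (1+o(1))m(n)$, I would stratify matroid stack delta-matroids $D$ by their \emph{spread} $r(D_{\max})-r(D_{\min})$, handling spread $0$ (the matroids themselves, totaling $m(n)$) separately from spread at least $1$. For the latter, Proposition~\ref{prop:bouchetquotient} gives a nontrivial quotient relation between $D_{\min}$ and $D_{\max}$, and Corollary~\ref{cor32} identifies $D$ as a sub-set-system of the full Higgs lift delta-matroid of the pair $(D_{\min},D_{\max})$. Counting should thus proceed in two layers: first bound the number of pairs $(Q,L)$ of matroids on $[n]$ with $Q$ a proper quotient of $L$, and then for each such pair bound the number of admissible matroid stack delta-matroid structures on $E$ (these are parameterized by subsets of Higgs lift levels via Proposition~\ref{new31}, subject to each chosen level remaining a matroid). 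Since by the asymp conjecture a typical matroid is sparse paving, and sparse paving matroids have highly restricted modular-cut, and hence quotient, structure, one can hope the total count of positive-spread matroid stack delta-matroids is $o(m(n))$.

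The main obstacle lies in this last count. The asymp conjecture itself is not quantitative about how many nontrivial matroid quotients a typical matroid admits, yet such a bound seems essential to control the positive-spread contribution to $ms(n)$. A natural intermediate target is therefore the following strengthening of the result of \cite{asymp}: the expected number of proper rank-$(r-1)$ and rank-$(r-2)$ matroid quotients of a uniformly random rank-$r$ matroid on $[n]$ tends to $0$. If this is available, perhaps through the hyperplane description of sparse paving matroids, the conjecture here follows. Without such quantitative control, any proof of the conjecture is likely to proceed in tandem with deeper progress on the matroid case rather than as a clean reduction to it.
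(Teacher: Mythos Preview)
The statement you are addressing is a \emph{conjecture} in the paper, not a theorem; the paper offers no proof, only heuristic motivation via the analogous conjecture of \cite{asymp} for matroids. So there is no proof in the paper to compare your proposal against.

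As for the proposal itself, it is not a proof but rather a conditional sketch, and you are candid about this. Two points deserve emphasis. First, your argument assumes the conjecture of \cite{asymp} that $sp(n)/m(n)\to 1$, which is itself open; so even at best you would be establishing the present conjecture \emph{conditionally} on that one. Second, and more seriously, the reduction you outline requires the bound $ms(n)\le (1+o(1))m(n)$, i.e., that matroid stack delta-matroids of positive spread are asymptotically negligible. You correctly identify that this would need quantitative control on the number of proper quotients of a typical matroid, and you acknowledge that no such result is available. This is a genuine gap: the step cannot be completed with current tools, and your own closing paragraph concedes as much. In short, your proposal is a reasonable heuristic roadmap, consistent with why the authors state this as a conjecture rather than a theorem, but it does not constitute a proof.
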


In a similar vein, one might wonder if, asymptotically, almost all
delta-matroids are sparse paving, but this is far from being true, as
the number of delta-matroids is significantly greater than the number
of matroid stack delta-matroids.  It is shown in~\cite{FMN} that the
number $d_n$ of delta-matroids with ground set $\{1,\ldots,n\}$ is at
least $2^{2^{n-1}}$. On the other hand, in~\cite{BPV} it is shown that
the number $m_n$ of matroids with ground set $\{1,\ldots,n\}$
satisfies $\log \log m_n \leq n-\frac{3}{2} \log n + O(1)$, where all
logs are taken to base $2$.  A crude estimate gives an upper bound of
$f_n =(m_n+1)^{n+1}$ for the number of matroid stack delta-matroids
with ground set $\{1,\ldots,n\}$ and
$\log \log f_n = n - \frac{1}{2} \log n + O(1) < n-1\leq\log \log
d_n$.

Repeating this analysis for even delta-matroids yields a different
picture, as it is also shown in~\cite{FMN} that the number $e_n$ of
even delta-matroids with ground set $\{1,\ldots,n\}$ satisfies
\[ n-1-\log n \leq \log \log e_n \leq n-\log n + O(\log \log n),\]
with the lower bound being the number of even sparse paving
delta-matroids with ground set $\{1,\ldots,n\}$, so we pose the
following open question.

\begin{open}
  Asymptotically, are almost all even delta-matroids sparse paving?
\end{open}

It is straightforward to identify the sparse paving set systems that
are excluded minors for matroid stack delta-matroids.  These comprise
the excluded minors for sparse paving delta-matroids within the class
of sparse paving set systems.  Since the class of sparse paving
delta-matroids is closed under duality, every set system in the set of
excluded minors for sparse paving delta-matroids has its dual also in
the list of excluded minors.

\begin{cor}\label{exsparse}
  A sparse paving set system is a sparse paving delta-matroid if and
  only if it contains no minor isomorphic to a set system in
  \[\{S_i:i\geq 3\}\cup \{T_2,\,\, T_2^*,\,\, T_3*b,\,\, T_4*b,\,\,
    T_4*\{a,c\} \}.\]
\end{cor}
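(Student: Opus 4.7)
The plan is to reduce the problem to identifying which set systems in the excluded-minor list of Corollary~\ref{exmatroidstack} are sparse paving set systems, and then to perform that identification by a case check. Because the class of sparse paving matroids is minor-closed, applying Lemma~\ref{matroidstackminors} with $\mathcal{M}$ equal to that class shows that every minor of a sparse paving set system is again a sparse paving set system, and in particular the class of sparse paving delta-matroids is minor-closed. Consequently the excluded minors for sparse paving delta-matroids within the class of sparse paving set systems are precisely the sparse paving set systems that appear as excluded minors for matroid stack delta-matroids in Corollary~\ref{exmatroidstack}.

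The key observation that cuts down the work is that a matroid is sparse paving if and only if both it and its dual are paving. Since the stack of $S^*$ consists of the duals of the matroids in the stack of $S$, a set system $S$ is a sparse paving set system if and only if both $S$ and $S^*$ are paving set systems. Hence I can obtain the desired list by intersecting the paving excluded minors from Corollary~\ref{expaving} with the image of that list under duality---equivalently, by keeping only those entries whose dual is also a paving set system.

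With this reduction, what remains is a finite case check through the families (1)--(8) of Corollary~\ref{expaving}. Each $S_i$ with $i\geq 3$ is self-dual and has only all-loop or all-coloop matroids in its stack, so these entries should all survive. For each other candidate I would compute the dual twist and test whether each layer is a paving matroid; a rank-$r$ layer with $r\geq 2$ having a loop in the dual will fail. I expect this test to single out $T_2$, $T_2^*$, $T_3*b$, $T_4*b$, and $T_4*\{a,c\}$ as the only survivors, while eliminating the remaining entries, including every twist listed under $T_6$, $T_7$, and $T_8$. As a sanity check, the final list should be closed under duality: $T_2$ and $T_2^*$ are dual to each other, $T_3*b$ is self-dual up to isomorphism, and $T_4*b$ and $T_4*\{a,c\}$ are duals of each other. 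The main obstacle is simply the bookkeeping involved in this case analysis; no new technical ideas beyond Corollary~\ref{expaving} are required.
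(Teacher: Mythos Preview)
Your approach is correct and matches the paper's, which simply states (without an explicit proof) that the result follows by identifying the sparse paving set systems among the excluded minors in Corollary~\ref{exmatroidstack}. Your additional observation---that $S$ is a sparse paving set system if and only if both $S$ and $S^*$ are paving set systems, so one can intersect the list in Corollary~\ref{expaving} with its image under duality---is a clean shortcut, but it is a refinement of the same case-check rather than a different method.
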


Next we consider matroid stack delta-matroids with a stack of
quotients.  That is, let $D$ be a matroid stack set system.  If every
matroid in the stack is a quotient of the matroid with next highest
rank in the stack, then we say that $D$ is a \emph{quotient set
  system}.  It follows from Lemma~\ref{lem:quotviabases} that a
quotient of a quotient of $M$ is also a quotient of $M$.  Therefore
every matroid in the stack of a quotient set system is a quotient of
every matroid in the stack with higher rank.  If $D$ is a quotient set
system, and $D$ is a delta-matroid, then we say that it is a
\emph{quotient delta-matroid}. It also follows from
Lemma~\ref{lem:quotviabases} that $D^*$ is also a quotient
delta-matroid.

\begin{lemma}\label{quotientminors}
  The class of quotient delta-matroids is minor-closed and dual-closed.
\end{lemma}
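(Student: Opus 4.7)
The plan is to treat dual-closure and minor-closure separately, in each case reducing to known properties of matroid quotients via Lemma~\ref{lem:quotviabases} and Lemma~\ref{matroidstackminors}.

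For dual-closure, let $D$ be a quotient delta-matroid with stack $M_k,M_{k+1},\ldots,M_\ell$. Then $D^*$ is a delta-matroid, and unpacking the twist on $E$ shows that the matroid at level $|E|-i$ of the stack of $D^*$ is $M_i^*$; thus the stack of $D^*$ reverses the order and dualizes each matroid. By Lemma~\ref{lem:quotviabases}(2), whenever $M_i$ is a quotient of $M_j$ in the stack of $D$ (with $i<j$), the matroid $M_j^*$ is a quotient of $M_i^*$ in the stack of $D^*$, and $M_j^*$ sits at the lower level; applying this to consecutive proper layers of $D^*$ gives the quotient condition for $D^*$.

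For minor-closure, it suffices to show that $D\ba e$ and $D/e$ are quotient delta-matroids whenever $D$ is. Each is a delta-matroid, and by Lemma~\ref{matroidstackminors} applied to the class of all matroids, each is a matroid stack set system; indeed, the proof of that lemma shows that a proper layer of $D\ba e$ at cardinality $i$ is $M_i\ba e$, and by a dual computation a proper layer of $D/e$ at cardinality $i$ is $M_{i+1}/e$. The remaining task is to verify the quotient condition between consecutive proper layers of the minor.

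I would use two standard subsidiary facts: matroid quotients are transitive, and they are preserved under deletion and contraction. Transitivity is immediate from part~(\ref{lemlabel:quocir}) of Lemma~\ref{lem:quotviabases}, since a union of unions of circuits of $Q$ is a union of circuits of $Q$. Preservation under minors comes from the witness-matroid definition: if $M\ba X=L$ and $M/X=Q$, then for any $e$ in the common ground set $E(L)=E(Q)$ (so $e\notin X$), the matroid $M\ba e$ witnesses $Q\ba e$ as a quotient of $L\ba e$, and $M/e$ witnesses $Q/e$ as a quotient of $L/e$. Combining these: if $N$ and $N'$ are at consecutive proper layers of $D\ba e$ with cardinalities $k_j<k_{j'}$, then $N=M_{k_j}\ba e$ and $N'=M_{k_{j'}}\ba e$, and the proper layers of $D$ with cardinalities in the interval $[k_j,k_{j'}]$ form a chain of quotients linking $M_{k_j}$ to $M_{k_{j'}}$; transitivity gives that $M_{k_j}$ is a quotient of $M_{k_{j'}}$, and preservation under deletion gives that $N$ is a quotient of $N'$. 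The contraction case is analogous. The one subtlety, and what I expect to be the only non-routine step, is precisely this interpolation via transitivity: a layer proper in $D$ can collapse in $D\ba e$ when $e$ is a coloop of the corresponding matroid, so consecutive proper layers of the minor need not come from consecutive proper layers of $D$, and the gap must be bridged by composing quotients.
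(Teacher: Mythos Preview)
Your proof is correct and follows essentially the same route as the paper. The paper establishes dual-closure in the text preceding the lemma (via Lemma~\ref{lem:quotviabases}, exactly as you do), and also records transitivity of quotients there; inside the proof it then reduces $D/e$ to $D\ba e$ via $D/e=(D^*\ba e)^*$ rather than treating contraction separately, and it cites Lemma~\ref{lem:higgsdual} rather than the witness-matroid definition for the fact that deleting $e$ preserves the quotient relation. Your version is a bit more explicit about the interpolation via transitivity when a layer collapses, which the paper handles in one line by invoking the already-noted fact that every matroid in the stack is a quotient of every higher one; otherwise the arguments coincide.
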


\begin{proof}
  Let $D=(E,\mathcal{F})$ be a quotient delta-matroid and take
  $e \in E$.  Since $D/e=(D^*\ba e)^*$, it suffices to show that
  $D\ba e$ is a quotient delta-matroid.  By
  Lemma~\ref{matroidstackminors}, $D\ba e$ is a matroid stack
  delta-matroid.  The matroids $M$ and $M'$ in the stack of $D\ba e$
  are obtained from some matroids $N$ and $N'$ in the stack of $D$ by
  deleting $e$.  Without loss of generality, we assume that $N$ is a
  quotient of $N'$.  Then, by Lemma~\ref{lem:higgsdual}, $M$ is a
  quotient of $M'$.
\end{proof}

Note that if $M$ is the matroid with ground set $\{1,2,3,4\}$ and set
of bases $$\{\{1,2\},\{1,3\}, \{1,4\},\{2,3\},\{2,4\}\},$$ then
$M*\{1,3\}$ is not a matroid stack delta-matroid.  Therefore none of
the classes of matroid stack delta-matroids, sparse paving
delta-matroids, or quotient delta-matroids is closed under twists.

The following result is easily obtained from
Corollary~\ref{exmatroidstack} by identifying the quotient set systems
in $\mathcal{S}\cup\mathcal{T}$.

\begin{cor}\label{exquotientstack}
  A quotient set system is a quotient delta-matroid if and only if it
  does not have a minor in
  \[\{S_i:i\geq 3\}\cup\{T_1,\,\, T_1^*,\,\, T_2,\,\,
  T_2^*,\,\, T_3,\,\, T_4,\,\, T_4^*,\,\, T_5,\,\, T_6,\,\, T_7,\,\,
  T_7^*,\,\, T_8,\,\, T_8^*\}.\]
\end{cor}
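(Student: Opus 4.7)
The plan is to deduce the statement from Corollary~\ref{exmatroidstack} by identifying which excluded minors listed there are themselves quotient set systems. The logical backbone is: a quotient set system $D$ is a quotient delta-matroid if and only if $D$ is a delta-matroid, so by Theorem~\ref{exdelta} this happens if and only if $D$ has no minor in $\mathcal{S}\cup \mathcal{T}$. To conclude that the excluded minors live inside the listed finite set, I first need the class of quotient set systems to be minor-closed (not just the class of quotient delta-matroids, as stated in Lemma~\ref{quotientminors}). This follows by essentially the same argument as that lemma: by Lemma~\ref{matroidstackminors}, minors of matroid stack set systems are matroid stack set systems, and by Lemma~\ref{lem:higgsdual}, if $Q$ is a quotient of $L$ then $Q\ba e$ is a quotient of $L\ba e$ and $Q/e$ is a quotient of $L/e$, so the quotient relations between consecutive proper matroids in the stack are preserved under minors.

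Given minor-closure, any minor of $D$ that lies in $\mathcal{S}\cup\mathcal{T}$ is itself a quotient set system. So the excluded minors for quotient delta-matroids within the class of quotient set systems are exactly the quotient set systems appearing in $\mathcal{S}\cup\mathcal{T}$; equivalently, they are the members of the list in Corollary~\ref{exmatroidstack} that are quotient set systems. The task reduces to verifying, for each item in that list, whether the proper matroids in its stack, when ordered by rank, form a chain of quotients.

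For the $\mathcal{S}$-family, $S_i$ has stack consisting of the rank-$0$ matroid and the rank-$i$ free matroid on $E$; the former is a quotient of the latter (take a lift in which each element of $E$ is placed parallel to a fresh element, and then contract the fresh elements), so $S_i$ is in the list. For $S_i*X$ with $0<|X|<i$, the stack has exactly two matroids: one with unique basis $X$ and the other with unique basis $E-X$. In the larger-rank matroid, each element of $X$ (say) is a loop, so $\{x\}$ is a circuit; but in the smaller-rank matroid, $x$ is a coloop, so $\{x\}$ cannot be a union of circuits, violating the quotient characterization of Lemma~\ref{lem:quotviabases}. Hence no nontrivial twist of $S_i$ is a quotient set system.

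For $\mathcal{T}$, I intend to run the analogous analysis on each $T_i$ and its twists, using the same circuit/cocircuit criterion from Lemma~\ref{lem:quotviabases}: a twist that toggles the membership status of an element from ``loop at some rank'' to ``coloop at the next higher rank'' cannot give a quotient chain. One checks that the untwisted $T_1,\dots,T_8$ are quotient set systems (their stacks are naturally chains of loops and coloops), as are the duals $T_1^*,T_2^*,T_4^*,T_7^*,T_8^*$ (dual-closure of the class), while every other twist fails the quotient condition between some consecutive pair in the stack. The twist $T_3*\{a,b,c\}=T_3^*$ is a delta-matroid, so it does not even appear as an excluded minor in Corollary~\ref{exmatroidstack}. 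The main obstacle is simply the bookkeeping: $\mathcal{T}$ contains $51$ set systems across the tables in the appendix, and each must be inspected, but in every case the verdict is determined by the elementary loop/coloop test described above, and the listed set is exactly the collection that survives.
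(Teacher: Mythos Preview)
Your approach is essentially the paper's: reduce to Corollary~\ref{exmatroidstack} via minor-closure of the class of quotient set systems, then pick out which members of the list there are themselves quotient set systems. One factual slip to correct: $T_3^*$ is \emph{not} a delta-matroid---rather, $T_3$ (like $T_5$ and $T_6$) is self-dual up to isomorphism, which is the actual reason $T_3^*$, $T_5^*$, and $T_6^*$ do not appear separately in the final list; each is a quotient set system isomorphic to its undecorated counterpart. With that corrected, the argument is sound and matches the paper.
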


We note the following two properties of even sparse paving set
systems. A simple generalization of Lemma~4.1 from~\cite{FMN} shows
that if the stack of an even sparse paving set system $S$ contains no
improper set systems other than those required to ensure evenness,
then $S$ is a delta-matroid.  Moreover an even sparse paving set
system is also a quotient set system.  Hence we have the following
proposition.
\begin{prop}\label{speven}
  If $S$ is an even sparse paving set system, then $S$ is a quotient
  set system.
\end{prop}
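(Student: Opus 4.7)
My plan is to reduce Proposition~\ref{speven} to the following claim: whenever $M$ and $M'$ are sparse paving matroids on a common ground set $E$ with $r(M') - r(M) = k \ge 2$, the matroid $M$ is a quotient of $M'$. Granting this claim, the proposition follows immediately: evenness of $S$ forces the feasible sets of $S$ to have cardinalities of a single parity, so the ranks of all matroids in the stack of $S$ share a parity, and hence any two matroids that are consecutive by rank in the stack have rank difference at least $2$, to which the claim applies.

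To prove the reduction, I would use the rank-function characterization of quotients (equivalent to the conditions in Lemma~\ref{lem:quotviabases}; see, e.g.,~\cite{Brylawski,oxley}): $M$ is a quotient of $M'$ if and only if $\delta(X) := r_{M'}(X) - r_M(X)$ is monotone non-decreasing in $X$. The first step would be to record the rank function of a sparse paving matroid $N$ of rank $\rho$:
\[ r_N(X) = \begin{cases} |X| & \text{if } |X| < \rho, \\ \rho - 1 & \text{if } |X| = \rho \text{ and } X \text{ is a circuit-hyperplane of } N, \\ \rho & \text{otherwise.}\end{cases}\]
The case $|X| \ge \rho + 1$ is where sparse paving is essential: since any two circuit-hyperplanes of $N$ differ in at least $4$ elements, at most one $\rho$-subset of any $(\rho+1)$-subset can be a circuit-hyperplane, so $X$ contains a basis of $N$ whenever $|X| \ge \rho + 1$.

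Writing $r = r(M)$ and $r + k = r(M')$, this formula yields $\delta(X) = 0$ for $|X| < r$, with a possible increase to $1$ when $|X| = r$ and $X$ is a circuit-hyperplane of $M$; $\delta(X) = |X| - r$ for $r < |X| < r + k$; a possible decrease from $k$ to $k - 1$ when $|X| = r + k$ and $X$ is a circuit-hyperplane of $M'$; and $\delta(X) = k$ for $|X| > r + k$. Monotonicity of $\delta$ then reduces to a short case analysis on $A \subseteq B$; the only subtle case is when $A$ is a circuit-hyperplane of $M$ and $B \supseteq A$ is a circuit-hyperplane of $M'$ of size $r + k$, which gives the inequality $\delta(A) = 1 \le k - 1 = \delta(B)$, valid precisely because $k \ge 2$.

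The main obstacle, I expect, is exactly this last inequality: the hypothesis $k \ge 2$, corresponding to the evenness of $S$, is used in an essential way. Without it (say $k = 1$), a circuit-hyperplane of $M$ of size $r$ properly contained in a circuit-hyperplane of $M'$ of size $r + 1$ would give $\delta(A) = 1 > 0 = \delta(B)$, violating monotonicity and breaking the quotient relation; this explains why the ``even'' hypothesis cannot be dropped.
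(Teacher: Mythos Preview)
Your argument is correct. The reduction to showing that any two sparse paving matroids on $E$ whose ranks differ by at least $2$ are automatically in a quotient relation is exactly right, and your verification via the monotonicity of $\delta(X)=r_{M'}(X)-r_M(X)$ goes through cleanly. The rank formula you state for a sparse paving matroid of rank $\rho$ is correct; in particular, the fact that every set of size at least $\rho+1$ is spanning can be seen either by your ``two circuit-hyperplanes differ in at least four elements'' observation, or equivalently because $N^*$ being paving forces every hyperplane of $N$ to have size at most $\rho$.

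One small organizational remark: the case analysis is slightly simpler if you check monotonicity only for $A\subset B$ with $|B|=|A|+1$ (transitivity along a chain then gives the general case). The genuinely delicate step is then $|A|=r$ with $A$ a circuit-hyperplane of $M$ and $|B|=r+1$: here $\delta(A)=1$ and, because $k\ge 2$ puts $|B|$ strictly between $r$ and $r+k$, $\delta(B)=|B|-r=1$. This is exactly where $k\ge 2$ is used, and your explanation of why $k=1$ breaks the argument is on point.

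As for comparison with the paper: the paper does not give a proof of this proposition. It is stated after a one-line remark that ``an even sparse paving set system is also a quotient set system,'' with no further justification. Your proof supplies the missing argument and, moreover, isolates the stronger statement that any two sparse paving matroids on the same ground set with rank gap at least $2$ are in a quotient relation, independent of the rest of the stack or of evenness per se.
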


\section{Appendix: The twists of $T_1,T_2,\ldots,T_8$}\label{sec:app}

\begin{table}[H]
\begin{center}
\begin{tabular}{r|cccc|cccc|l}
  \cline{2-9}
  $T_1$ & $\emptyset$ &  & $\{a,b\}$ & $\{a,b,c\}$ & $\emptyset$ & $\{c\}$ &
  & $\{a,b,c\}$\rule{0pt}{10pt} & $T_1^*$ \\
  \cline{2-9}
 \multicolumn{10}{c}{}\\[-0.7em]
  \cline{2-9}
  \multirow{2}{*}{$T_1*\{a\}$} & & $\{a\}$ & \multirow{2}{*}{$\{b,c\}$} &  &  &
  \multirow{2}{*}{$\{a\}$} & $\{b,c\}$ &
  &  \multirow{2}{*}{$T_1*\{b,c\}$}\rule{0pt}{10pt}  \\
\hspace{1.6cm} & & $\{b\}$ & & & & &  $\{a,c\}$ & &  \hspace{1.6cm} \\
  \cline{2-9}
 \multicolumn{10}{c}{}\\[-0.7em]
  \cline{2-9}
  $T_1*\{c\}$ &   & $\{c\}$ & $\{a,b\}$ & $\{a,b,c\}$ & $\emptyset$ & $\{c\}$ &
     $\{a,b\}$ &  \rule{0pt}{10pt} &  $T_1*\{a,b\}$ \\
  \cline{2-9}  \multicolumn{10}{c}{}\\[-0.7em]
\end{tabular}
\end{center}
\caption{All twists of $T_1$ up to isomorphism.  Dual pairs
  are side by side.}\label{t1}
\end{table}

\vfill

\begin{table}[H]
\begin{center}
\begin{tabular}{r|cccc|cccc|l}
 \cline{2-9}
  \multirow{2}{*}{$T_2$}  & \multirow{2}{*}{$\emptyset$}
  &  & $\{a,b\}$ &  \multirow{2}{*}{$\{a,b,c\}$} &
 \multirow{2}{*}{$\emptyset$} & $\{c\}$ &  &
 \multirow{2}{*}{$\{a,b,c\}$}\rule{0pt}{10pt} &
 \multirow{2}{*}{$T_2^*$}  \\
& &  & $\{a,c\}$ &  &  & $\{b\}$ &  & & \\
 \cline{2-9}
 \multicolumn{10}{c}{}\\[-0.7em]
 \cline{2-9}
 & & $\{a\}$ &  &  &  &  & $\{b,c\}$ &  \rule{0pt}{10pt} & \\
 $T_2*\{a\}$ & & $\{b\}$ & $\{b,c\}$ &  &  & $\{a\}$ & $\{a,c\}$ & &
  $T_2*\{b,c\}$ \\
\hspace{1.6cm} & & $\{c\}$ &  &  &  &  & $\{a,b\}$ & &  \hspace{1.6cm} \\
 \cline{2-9}
 \multicolumn{10}{c}{}\\[-0.7em]
 \cline{2-9}
  \multirow{2}{*}{$T_2*\{c\}$} & & $\{a\}$ &  \multirow{2}{*}{$\{a,b\}$} &
  \multirow{2}{*}{$\{a,b,c\}$} & \multirow{2}{*}{$\emptyset$} &
  \multirow{2}{*}{$\{c\}$} & $\{b,c\}$ &  \rule{0pt}{10pt} &
  \multirow{2}{*}{$T_2*\{a,b\}$}\\
  & & $\{c\}$ &  &  &  &  & $\{a,b\}$ & & \\
 \cline{2-9} \multicolumn{10}{c}{}\\[-0.7em]
\end{tabular}
\end{center}
\caption{All twists of $T_2$ up to isomorphism.}\label{t2}
\end{table}

\vfill

\begin{table}[H]
\begin{center}
\begin{tabular}{r|cccc|l}
  \cline{2-5}
 $T_3$ &  $\emptyset$ & $\{a\}$ & $\{a,b\}$ &
 $\{a,b,c\}$\rule{0pt}{10pt} & \\
 \cline{2-5}
 \multicolumn{6}{c}{}\\[-0.7em]
 \cline{2-5}
  \multirow{2}{*}{$T_3*\{b\}$} & & $\{a\}$ & $\{a,b\}$ & & \rule{0pt}{10pt} \\
 \hspace{1.6cm} & & $\{b\}$ & $\{a,c\}$ & & \hspace{1.6cm} \\
   \cline{2-5}
 \multicolumn{6}{c}{}\\[-0.8em]
\end{tabular}     \\
\begin{tabular}{r|cccc|cccc|l}
 \cline{2-9}
  \multirow{2}{*}{$T_3*\{a\}$} & \multirow{2}{*}{$\emptyset$} &
  $\{a\}$ &  \multirow{2}{*}{$\{b,c\}$}  &
  \hspace{35pt}  &  \hspace{5pt} &  \multirow{2}{*}{$\{a\}$} &
  $\{b,c\}$\rule{0pt}{10pt} &  \multirow{2}{*}{$\{a,b,c\}$} &
  \multirow{2}{*}{$T_3*\{b,c\}$}\\  \hspace{1.6cm}
           & & $\{b\}$ &  &  &  &  & $\{a,c\}$ & & \hspace{1.6cm} \\
   \cline{2-9} \multicolumn{10}{c}{}\\[-0.7em]
\end{tabular}
\end{center}
\caption{All twists of $T_3$ up to isomorphism.  A twist alone in a
  row is self-dual.}\label{t3}
\end{table}

\vfill

\begin{table}[H]
\begin{center}
\begin{tabular}{r|cccc|cccc|l}
   \cline{2-9}
 \multirow{2}{*}{$T_4$} & \multirow{2}{*}{$\emptyset$} &
 \multirow{2}{*}{$\{a\}$} & $\{a,b\}$  & \multirow{2}{*}{$\{a,b,c\}$}
 & \multirow{2}{*}{$\emptyset$} & $\{c\}$ & \multirow{2}{*}{$\{b,c\}$}
 &  \multirow{2}{*}{$\{a,b,c\}$} &   \multirow{2}{*}{$T_4^*$} \\
 \hspace{1.6cm} & &  & $\{a,c\}$ &  &  & $\{b\}$ &  & & \hspace{1.6cm}
  \\ \cline{2-9} \multicolumn{10}{c}{}\\[-0.7em] \cline{2-9}
 & & $\{a\}$ &  &  &  &  & $\{b,c\}$ & & \\
 $T_4*\{a\}$ & $\emptyset$ & $\{b\}$ & $\{b,c\}$ &  &  & $\{a\}$ &
 $\{a,c\}$ & $\{a,b,c\}$ &  $T_4*\{b,c\}$ \\
 & & $\{c\}$ &  &  &  &  & $\{a,b\}$ & & \\
 \cline{2-9} \multicolumn{10}{c}{}\\[-0.7em] \cline{2-9}
  \multirow{2}{*}{$T_4*\{b\}$}   & & $\{a\}$ & $\{a,b\}$ &
  \multirow{2}{*}{$\{a,b,c\}$} & \multirow{2}{*}{$\emptyset$}
  & $\{c\}$ & $\{b,c\}$ & &
  \multirow{2}{*}{$T_4*\{a,c\}$}     \\
 & & $\{b\}$ & $\{a,c\}$ &  &  & $\{b\}$ & $\{a,c\}$ & &  \\  \cline{2-9}
 \multicolumn{10}{c}{}\\[-0.7em]
\end{tabular}
\end{center}
\caption{All twists of $T_4$ up to isomorphism.}\label{t4}
\end{table}

\vfill

\begin{table}[H]
\begin{center}
\begin{tabular}{r|ccccc|l}
  \cline{2-6}
  $T_5$  & $\emptyset$ & \hspace{15pt}  &
  $\{a,b\}$ &  \hspace{35 pt} & $\{a,b,c,d\}$\rule{0pt}{10pt} &
  \\ \cline{2-6} \multicolumn{6}{c}{}\\[-0.7em] \cline{2-6}
 & & & $\{a,d\}$ & & & \rule{0pt}{10pt} \\
  $T_5*\{a,d\}$ & & & $\{b,d\}$ & & & \\
 \hspace{1.6cm} & & & $\{b,c\}$ & & &  \hspace{1.6cm} \\  \cline{2-6}
 \multicolumn{6}{c}{}\\[-0.7em]
\end{tabular}
\begin{tabular}{|ccccc|ccccc|}
  \hline & $\{a\}$ & & \multirow{2}{*}{$\{b,c,d\}$} & &  &
      \multirow{2}{*}{$\{a\}$} & & $\{b,c,d\}$
       &  \rule{0pt}{10pt}  \\
         & $\{b\}$ & &
        & \hspace{45pt} & & & & $\{a,c,d\}$ & \\
  \hline \multicolumn{5}{c}{$T_5*\{a\}$} &
   \multicolumn{5}{c}{$T_5*\{b,c,d\}$}\rule[-7pt]{0pt}{17pt} \\ \hline
  \multirow{2}{*}{$\emptyset$}  & & $\{a,b\}$ & & & & & $\{c,d\}$ & &
  \multirow{2}{*}{$\{a,b,c,d\}$}\rule{0pt}{10pt} \\
  & & $\{c,d\}$ & & & & & $\{a,b\}$ & &  \\  \hline
  \multicolumn{5}{c}{$T_5*\{a,b\}$} &
  \multicolumn{5}{c}{$T_5*\{c,d\}$}\rule[-7pt]{0pt}{18pt}  \\
 \multicolumn{10}{c}{}\\[-0.7em]
\end{tabular}
\end{center}
\caption{All twists of $T_5$ up to isomorphism.}\label{t5}
\end{table}

\vfill

\begin{table}[H]
\begin{center}
\begin{tabular}{r|ccccc|l}
  \cline{2-6}
\multirow{2}{*}{$T_6$}  & \multirow{2}{*}{$\emptyset$} & \hspace{15pt} & $\{a,b\}$ &
   \hspace{35 pt} &  \multirow{2}{*}{$\{a,b,c,d\}$}\rule{0pt}{10pt} &  \\
 & & & $\{a,c\}$ & & &\\   \cline{2-6}
 \multicolumn{7}{c}{}\\[-0.7em]
  \cline{2-6}
  \multirow{2}{*}{$T_6*\{b\}$} &  & $\{a\}$ & & $\{a,b,c\}$ & & \rule{0pt}{10pt} \\
&  & $\{b\}$ & & $\{a,c,d\}$ & &   \\   \cline{2-6}
 \multicolumn{7}{c}{}\\[-0.7em]
    \cline{2-6}
& & & $\{a,d\}$ & & & \rule{0pt}{10pt} \\
 \multirow{2}{*}{$T_6*\{a,d\}$} & & & $\{b,c\}$ & & & \\
& & & $\{b,d\}$ & & &\\
 \hspace{1.6cm} & & & $\{c,d\}$ & & &  \hspace{1.6cm} \\   \cline{2-6}
 \multicolumn{7}{c}{}\\[-0.7em]
\end{tabular}
\begin{tabular}{|ccccc|ccccc|}
  \hline & $\{a\}$ & & & & & & & $\{b,c,d\}$ & \rule{0pt}{10pt}  \\
  & $\{b\}$ & & $\{b,c,d\}$ & \hspace{45pt} & & $\{a\}$ & & $\{a,c,d\}$ & \\
  & $\{c\}$ & & & & & & & $\{a,b,d\}$ & \\
  \hline \multicolumn{5}{c}{$T_6*\{a\}$} &
  \multicolumn{5}{c}{$T_6*\{b,c,d\}$}\rule[-7pt]{0pt}{17pt}  \\ \hline
  & & $\{a,b\}$ & & & & & $\{c,d\}$ & & \rule{0pt}{10pt} \\
  $\emptyset$ & & $\{b,c\}$ & & & & & $\{a,d\}$ &  &  $\{a,b,c,d\}$  \\
  & & $\{c,d\}$ & & & & & $\{a,b\}$ & &\\
  \hline
  \multicolumn{5}{c}{$T_6*\{a,b\}$} &
  \multicolumn{5}{c}{$T_6*\{c,d\}$}\rule[-7pt]{0pt}{18pt} \\
 \multicolumn{10}{c}{}\\[-0.7em]
\end{tabular}
\end{center}
\caption{All twists of $T_6$ up to isomorphism.}\label{t6}
\end{table}

\vfill

\begin{table}[H]
\begin{center}
\begin{tabular}{|ccccc|ccccc|}
  \hline
  & & $\{a,b\}$ & & & & & $\{c,d\}$ & &  \rule{0pt}{10pt} \\
  $\emptyset$ & & $\{a,c\}$ & & $\{a,b,c,d\}$ &
  $\emptyset$ & & $\{b,d\}$ & & $\{a,b,c,d\}$\\
  & & $\{a,d\}$ & & & & & $\{b,c\}$ & &\\ \hline
  \multicolumn{5}{c}{$T_7$} & \multicolumn{5}{c}{$T_7^*$}\rule[-7pt]{0pt}{18pt}  \\
  \hline  & $\{a\}$ & & & & & \multirow{4}{*}{$\{a\}$} & & $\{b,c,d\}$
       &  \rule{0pt}{10pt}  \\
  & $\{b\}$ & & \multirow{2}{*}{$\{b,c,d\}$} & & & & & $\{a,c,d\}$ & \\
  & $\{c\}$ & & & & & & & $\{a,b,d\}$ & \\
  & $\{d\}$ & & & & & & & $\{a,b,c\}$ & \\
  \hline \multicolumn{5}{c}{$T_7*\{a\}$} &
  \multicolumn{5}{c}{$T_7*\{b,c,d\}$}\rule[-7pt]{0pt}{17pt}  \\ \hline
  &   \multirow{2}{*}{$\{a\}$} & & $\{a,b,c\}$ & & & $\{d\}$ & &
   \multirow{2}{*}{$\{b,c,d\}$}  &   \rule{0pt}{10pt} \\
  &  \multirow{2}{*}{$\{b\}$} & & $\{a,b,d\}$ & &
  & $\{c\}$ & &  \multirow{2}{*}{$\{a,c,d\}$} & \\
  & & & $\{a,c,d\}$ & & & $\{b\}$ & & &\\
  \hline
  \multicolumn{5}{c}{$T_7*\{b\}$} &
  \multicolumn{5}{c}{$T_7*\{a,c,d\}$}\rule[-7pt]{0pt}{18pt} \\  \hline
  & & $\{a,b\}$ & & & & &  $\{c,d\}$ & &  \rule{0pt}{10pt}  \\
  \multirow{2}{*}{$\emptyset$} & &  $\{b,c\}$ & & &
                      & &  $\{a,d\}$ & &  \multirow{2}{*}{$\{a,b,c,d\}$}\\
  & & $\{b,d\}$ & & & & & $\{a,c\}$ & & \\
  & & $\{c,d\}$ & & & & &  $\{a,b\}$ & & \\
  \hline
  \multicolumn{5}{c}{$T_7*\{a,b\}$} &
  \multicolumn{5}{c}{$T_7*\{c,d\}$}\rule[-7pt]{0pt}{17pt}
  \\
 \multicolumn{10}{c}{}\\[-0.7em]
\end{tabular}
\end{center}
\caption{All twists of $T_7$ up to isomorphism.}\label{t7}
\end{table}

\vfill

\begin{table}[H]
\begin{center}
\begin{tabular}{|ccccc|ccccc|}
  \hline
  & & $\{a,b\}$ & & & & & $\{c,d\}$ & &  \rule{0pt}{10pt} \\
  $\emptyset$ & $\{a\}$ & $\{a,c\}$ & & $\{a,b,c,d\}$ &
  $\emptyset$ & & $\{b,d\}$ & $\{b,c,d\}$ & $\{a,b,c,d\}$\\
  & & $\{a,d\}$ & & & & & $\{b,c\}$ & &\\ \hline
  \multicolumn{5}{c}{$T_8$} & \multicolumn{5}{c}{$T_8^*$}\rule[-7pt]{0pt}{18pt}  \\
  \hline  & $\{a\}$ & & & & & \multirow{4}{*}{$\{a\}$} & & $\{b,c,d\}$
       &  \rule{0pt}{10pt}  \\
  \multirow{2}{*}{$\emptyset$}  & $\{b\}$ & & \multirow{2}{*}{$\{b,c,d\}$}
   & & & & & $\{a,c,d\}$ & \multirow{2}{*}{$\{a,b,c,d\}$} \\
  & $\{c\}$ & & & & & & & $\{a,b,d\}$ & \\
  & $\{d\}$ & & & & & & & $\{a,b,c\}$ & \\
  \hline \multicolumn{5}{c}{$T_8*\{a\}$} &
  \multicolumn{5}{c}{$T_8*\{b,c,d\}$}\rule[-7pt]{0pt}{17pt}  \\ \hline
  &   \multirow{2}{*}{$\{a\}$} & & $\{a,b,c\}$ & & & $\{d\}$ & &
   \multirow{2}{*}{$\{b,c,d\}$}  &   \rule{0pt}{10pt} \\
  &  \multirow{2}{*}{$\{b\}$} & $\{a,b\}$ & $\{a,b,d\}$ & &
  & $\{c\}$ & $\{c,d\}$ &  \multirow{2}{*}{$\{a,c,d\}$} & \\
  & & & $\{a,c,d\}$ & & & $\{b\}$ & & &\\
  \hline
  \multicolumn{5}{c}{$T_8*\{b\}$} &
  \multicolumn{5}{c}{$T_8*\{a,c,d\}$}\rule[-7pt]{0pt}{18pt}  \\
  \hline
  & & $\{a,b\}$ & & & & &  $\{c,d\}$ & &  \rule{0pt}{10pt}  \\
  \multirow{2}{*}{$\emptyset$} &  \multirow{2}{*}{$\{b\}$} &  $\{b,c\}$ & & &
  & &  $\{a,d\}$ &  \multirow{2}{*}{$\{a,c,d\}$} &  \multirow{2}{*}{$\{a,b,c,d\}$}\\
  & & $\{b,d\}$ & & & & & $\{a,c\}$ & & \\
  & & $\{c,d\}$ & & & & &  $\{a,b\}$ & & \\
  \hline
  \multicolumn{5}{c}{$T_8*\{a,b\}$} &
  \multicolumn{5}{c}{$T_8*\{c,d\}$}\rule[-7pt]{0pt}{17pt}  \\
 \multicolumn{10}{c}{}\\[-0.7em]
\end{tabular}
\end{center}
\caption{All twists of $T_8$ up to isomorphism.}\label{t8}
\end{table}

\vspace{5pt}

\begin{center}
 \textsc{Acknowledgments}
\end{center}

\vspace{3pt}

The authors thank the referee for comments that improved the
exposition.  J.\ Bonin thanks Vic Reiner for discussions that led to
discovering special cases of what later grew into the lattice path
delta-matroids that are introduced here.


\begin{thebibliography}{99}

\bibitem{lpmfacial} S.~An, J.~Jung, and S.~Kim, Facial structures of
  lattice path matroid polytopes, preprint, arXiv:1701.00362.

\bibitem{BPV} N.~Bansal. R.~A. Pendavingh, and J.~G. van der Pol, On
  the number of matroids, \emph{Combinatorica} \textbf{35} (2015)
  253--277.

\bibitem{MR2718679} J.~Bonin, Lattice path matroids: the excluded
  minors, \emph{J.\ Combin.\ Theory Ser.\ B} \textbf{100} (2010)
  585--599.

\bibitem{omer} J.~Bonin and O.~Gim\'enez, Multi-path matroids,
  \emph{Combin.\ Probab.\ Comput.}\ \textbf{16} (2007) 193--217.

\bibitem{lpm2} J.~Bonin and A.~de Mier, Lattice path matroids:
  structural properties, \emph{European J.\ Combin.}\ \textbf{27}
  (2006) 701--738.

\bibitem{lpm1} J.~Bonin, A.~de Mier, and M.~Noy, Lattice path
  matroids: enumerative aspects and Tutte polynomials, \emph{J.\
    Combin.\ Theory Ser.\ A} \textbf{104} (2003) 63--94.

\bibitem{splice} J.~Bonin and W.~Schmitt, Splicing matroids,
  \emph{European J.\ Combin.}\ \textbf{32} (2011) 722--744.

\bibitem{ab1} A.~Bouchet, Greedy algorithm and symmetric matroids,
  \emph{Math.\ Program.}\ \textbf{38} (1987) 147--159.

\bibitem{ab2} A.~Bouchet, Maps and delta-matroids, \emph{Discrete
    Math.}\  \textbf{78} (1989) 59--71.

\bibitem{abrep} A.~Bouchet, Representability of $\Delta$-matroids, in
  \emph{Proceedings of the 6th Hungarian Colloquium of
    Combinatorics}, Colloq.\ Math.\ Soc.\ J\'anos Bolyai, (1987)
  167--182.

\bibitem{multi2} A.~Bouchet, Multimatroids II. Orthogonality,
  minors and connectivity, \emph{Electron.\ J. Combin.}\ \textbf{5}
  (1998), Paper 8, 25 pp.

\bibitem{rep} A.~Bouchet and A.~Duchamp, Representability of
  delta-matroids over GF(2), \emph{Linear Algebra Appl.}\ {\bf 146}
  (1991) 67--78.

\bibitem{Brylawski} T.H. Brylawski, Constructions, in: \emph{Theory of
    Matroids}, N.~White ed.\ Cambridge Univ.\ Press, Cambridge, (1986)
  127--223.

\bibitem{CMNR} C.~Chun, I.~Moffatt, S.~D. Noble, and R.~Rueckriemen,
  Matroids, delta-matroids and embedded graphs, preprint,
  arXiv:1403.0920v2.

  
\bibitem{CMNR2} C.~Chun, I.~Moffatt, S.~D. Noble, and R.~Rueckriemen,
  On the interplay between embedded graphs and delta-matroids,
  \emph{Proc.\ Lond.\ Math.\ Soc.}\ in press.

\bibitem{lpmfm} E.~Cohen, P.~Tetali, and D.~Yeliussizov, Lattice path
  matroids: negative correlation and fast mixing, preprint,
  arXiv:1505.06710.

\bibitem{MR3413585} E.~Delucchi and M.~Dlugosch, Bergman complexes
  of lattice path matroids, \emph{SIAM J. Discrete Math.}\
  \textbf{29} (2015) 1916--1930.

\bibitem{saturateddelta} C.~Dupont, A.~Fink, and L.~Moci, Universal
  Tutte characters via combinatorial coalgebras, \emph{Algebr.\
    Comb.}\ to appear.

\bibitem{lpmtoric} S.~Eu, Y.~Lo, and Y.~Tsai, Toric $g$-polynomials of
  hook shape lattice path matroid polytopes and product of simplices,
  preprint, arXiv:1508.04674.

\bibitem{FMN} D.~Funk, D.~Mayhew and S.~Noble, How many delta-matroids
  are there? preprint, \emph{European J.\ Combin.}\ \textbf{69} (2018)
  149--158.

\bibitem{lpmpolytope} K.~Knauer, L.~Mart\'inez-Sandoval, and
J.~L.~Ramírez Alfons\'in, On lattice path matroid polytopes: integer
  points and Ehrhart polynomial, \emph{Discrete Comput.\ Geom.}\ (2018).
%
%

\bibitem{lpmTPineq} K.~Knauer, L.~Mart\'inez-Sandoval, and
  J.~L.~Ramírez Alfons\'in, A Tutte polynomial inequality for lattice
  path matroids, \emph{Adv.\ in Appl.\ Math.}\ \textbf{94} (2018)
  23--38.

\bibitem{Strong} J.~P. S. Kung, Strong maps, in: \emph{Theory of
    Matroids}, N.~White ed.\ Cambridge Univ.\ Press, Cambridge, (1986)
  224--253.

\bibitem{asymp} D.~Mayhew, M.~Newman, D.~Welsh, and G.~Whittle, On the
  asymptotic proportion of connected matroids, \emph{European J.\
    Combin.}\ \textbf{32} (2011) 882--890.

\bibitem{tennis} A.~de Mier and M.~Noy, A solution to the tennis ball
  problem, \emph{Theoret.\ Comput.\ Sci.} \textbf{346} (2005)
  254--264.

\bibitem{lpmcomput} J.~Morton and J.~Turner, Computing the Tutte
  polynomial of lattice path matroids using determinantal circuits,
 \emph{Theoret.\ Comput.\ Sci.}\ \textbf{598} (2015) 150--156.

\bibitem{oxley} James G.~Oxley, \emph{Matroid Theory}, second edition,
  Oxford University Press, Oxford, (2011).

\bibitem{MR2802156} J.~Schweig, Toric ideals of lattice path matroids
  and polymatroids, \emph{J.\ Pure Appl.\ Algebra} \textbf{215} (2011)
  2660--2665.

\bibitem{MR2578897} J.~Schweig, On the {$h$}-vector of a lattice path
  matroid, \emph{Electron.\ J. Combin.}\ \textbf{17} (2010) Note 3, 6pp.


\bibitem{MR843383} E.~Tardos, Generalized matroids and supermodular
  colourings, in \emph{Matroid theory (Szeged, 1982)} North-Holland,
  Amsterdam, (1985) 359--382.

\end{thebibliography}
\end{document}